\newcommand{\set}[2]{\left\{#1\;|\;#2\right\}}
\newcommand{\pair}[2]{\langle #1,#2\rangle}
\newcommand{\id}{\mathrm{id}}
\newcommand{\Z}{\mathbb{Z}}
\newcommand{\R}{\mathbb{R}}
\newcommand{\C}{\mathbb{C}}
\newcommand{\from}{\colon}
\newcommand{\g}{\mathfrak{g}}
\newcommand{\dtzero}{\frac{d}{dt}\Big|_{t=0}}
\DeclareMathOperator{\Aut}{Aut}
\DeclareMathOperator{\pr}{pr}
\DeclareMathOperator{\Fr}{Fr}
\DeclareMathOperator{\GL}{GL}
\DeclareMathOperator{\Ad}{Ad}
\DeclareMathOperator{\Tor}{Tor}
\DeclareMathOperator{\STor}{STor}
\DeclareMathOperator{\Prin}{Prin}
\DeclareMathOperator{\SPrin}{SPrin}
\DeclareMathOperator{\Hom}{Hom}
\DeclareMathOperator{\Sym}{Sym}
\numberwithin{equation}{section} 
\newtheorem{Theorem}{Theorem}[section]
\newtheorem{Corollary}[Theorem]{Corollary}
\newtheorem{Lemma}[Theorem]{Lemma}
\newtheorem{Proposition}[Theorem]{Proposition}
\theoremstyle{definition}
\newtheorem{Definition}[Theorem]{Definition}
\newtheorem{Example}[Theorem]{Example}
\begin{document}

\title{Frames of group-sets and their application in bundle theory}
\author[1,2]{Eric J. Pap\thanks{Email address: \href{mailto:e.j.pap@rug.nl}{e.j.pap@rug.nl}}}
\author[1]{Holger Waalkens}
\affil[1]{Bernoulli Institute, University of Groningen, Groningen, The Netherlands}
\affil[2]{Van Swinderen Institute, University of Groningen, Groningen, The Netherlands}
\date{July 22, 2021}

\maketitle

\abstract{
We study fiber bundles where the fibers are not a group $G$, but a free $G$-space with disjoint orbits. These bundles closely resemble principal bundles, hence we call them semi-principal bundles. The study of such bundles is facilitated by defining the notion of a basis of a $G$-set, in analogy with a basis of a vector space. The symmetry group of these bases is a wreath product. Similar to vector bundles, using the notion of a basis induces a frame bundle construction, which in this case results in a principal bundle with the wreath product as structure group. This construction can be formalized in the language of a functor, which retracts the semi-principal bundles to the principal bundles. In addition, semi-principal bundles support parallel transport just like principal bundles, and this carries over to the frame bundle.
}

\vspace{3mm}

\textbf{Keywords:} principal bundle; $G$-set; group-set; torsor; wreath product

\vspace{3mm}

\textbf{2020 Mathematics Subject Classification}: 55R10; 57M60

\section{Introduction}
Fiber bundles are mathematical objects generalizing the idea of a product space. This versatile definition now has a prominent role in many theories in both mathematics and physics. The basic principle is that the total space locally looks like a product of a small piece of the base and some fixed space, called the fiber. However, imposing a particular structure on the fiber reveals the existence of various specifications of fiber bundles, each with its distinctive properties. A well-known example is the vector bundle, where the fiber is a vector space.

Another famous class of bundles is formed by the principal bundles. However, here one must be careful when specifying the fiber. Given a topological or Lie group $G$, a principal $G$-bundle has a fiber given by the underlying space of $G$, endowed with some additional structure. The important observation here is that this is not the group structure, hence the fiber should not be identified with the original group $G$. Instead, the fiber has the structure of a $G$-space; a space endowed with a continuous, or even smooth, $G$-action. The fiber of a principal bundle is thus a $G$-torsor, i.e.\ a free and transtive $G$-space. A significant difference between $G$ as a group and $G$ as a torsor is the existence of a canonical base point. This is of utmost importance in gauge theory; the freedom in fixing a gauge only exists in the absence of such a canonical choice.

Given that principal bundles are thus fiber bundles based on torsors, and that torsors are specific group-spaces, clearly there exist more general classes of bundles with similar properties. That is, principal bundles form a specific subclass of the group-space bundles. In this paper we explore a certain part of this theory. Indeed, we will find that allowing not just torsors but general group-spaces opens up many possibilities, which at some point will force us to restrict ourselves. That is, we limit ourselves to free group-spaces that are a disjoint union of torsors, spaces we call semi-torsors. It turns out that these can be studied in great analogy to vector spaces. In particular, they admit a definition of a basis similar to vector spaces.

Bundles of semi-torsors we will call semi-principal bundles, due to their great similarity with principal bundles. Our original motivation to study these semi-principal bundles comes from the field of adiabatic quantum mechanics. This field studies how the energies and eigenstates of a Hamiltonian operator change under variation of system parameters. When looking at the underlying geometric structure, one naturally obtains semi-torsors. Consider a finite-dimensional Hamiltonian operator $H$. The set of all eigenstates of $H$ is then a $\C^\times$-space under the standard scaling action, where each eigenray is a $\C^\times$-torsor. Thus, if $H$ has distinct eigenvalues, then its eigenvector space is a $\C^\times$-semi-torsor. The varying of $H$ and the corresponding change in eigenstate can then be described using a semi-principal bundle, which for non-Hermitian Hamiltonians does not separate into principal bundles. The details of this physical model we treat in \cite{Pap2021AMechanics}; here we will treat the formal mathematics that is needed for it.

The set-up of the paper is as follows. We will start by considering the difference between a bundle of groups and a bundle of group-spaces in more detail in Sec.~\ref{sec:bundle of gps vs gp-spaces}. We will also discuss how the group-spaces induce a larger category of bundles that extends the principal bundles. We then focus on a special intermediate class of bundles, namely the semi-principal bundles whose fibers are semi-torsors. These semi-torsors and semi-principal bundles will be our main object of study for the rest of the paper. To start this study, we consider an equivalent of vector space bases for group-sets in Sec.~\ref{sec:bases of group-sets}. We will also focus on the symmetry group of these bases, and the functorial nature of sending admissible group-sets to their frame spaces. We then study the lifts of this technique to bundle theory in Sec.~\ref{sec:semi-prin and frame bundle}. We will find that the frames of a group-space can be used to define a frame bundle, similar to the frame bundle of a vector space. We finish the exploration with showing that semi-principal bundles support parallel transport just as well as principal bundles. In Sec.~\ref{sec:discussion} we will summarize the main results.

\section{Group bundles versus group-space bundles}
\label{sec:bundle of gps vs gp-spaces}

By definition, the key property of any fiber bundle is its local triviality. It is this property that allows one to endow the fiber with additional structure, which then lifts to a structure on the entire bundle. Let us take the fiber in a category $\mathcal{C}$ whose objects are topological spaces with additional structure and whose morphisms are continuous maps preserving this additional structure. A fiber bundle with fiber in $\mathcal{C}$ consists of the data $(\pi,B,M,F)$, where $F$ is an object of $\mathcal{C}$ called the model fiber, $B$ and $M$ are topological spaces and $\pi \from B \to M$ is a continuous surjection. This tuple is subject to the defining property that every point $x\in M$ has a neighborhood $U$ and a homeomorphism $\phi \from U\times F \to \pi^{-1}(U)$ such that
\begin{itemize}
    \item $\pi \circ \phi=\pr_U$,
    \item $\phi|_{\{u\}\times F}$ is an isomorphism in $\mathcal{C}$, for all $u\in U$.
\end{itemize}
Hence $B$, or more precisely each fiber $B_x$, should be equipped with the structure as given by $\mathcal{C}$, in such a way that a continuous structure on $B$ is obtained. In the following, we will take $\mathcal{C}$ to the category of topological groups, the category of group-spaces, as well as subcategories of these. The case of manifolds is then a natural adaptation.
Morphisms of fiber bundles follow the same general structure. A morphism from $(\pi_1,B_1,M_1,F_1) \to (\pi_2,B_2,M_2,F_2)$ is a pair $f=(f_1,f_2)$ of continuous maps $f_1 \from B_1\to B_2$ and $f_2\from M_1 \to M_2$ such that $\pi_2 \circ f_1=f_2\circ \pi_1$ and, for all $x\in M_1$, $f_1|_{(B_1)_x} \from (B_1)_x \to (B_2)_{f_2(x)}$, viewed in local trivializations, is a map in $\mathcal{C}$. In the following we will only consider the case where $f_2$ is identity, so we only specify $f_1$ and simply write $f \from B_1 \to B_2$.

\subsection{Group bundles}

We start with taking $\mathcal{C}$ to be the category of topological groups, which will result in the subclass of group bundles. We will first state the definition of a group bundle that we will use in this paper.
\begin{Definition}
    Let $G$ be a topological group. A \emph{group bundle with model fiber $G$} is a fiber bundle $(\pi,B,M,G)$ such that
    \begin{itemize}
        \item each fiber of $B$ has a group law. More precisely, writing $B\times_M B$ for the fiber-wise product bundle $\set{(b,b')\in B\times B}{\pi(b)=\pi(b')}$ over $M$, there is a bundle map
         \begin{equation}
            \begin{split}
               \mu\from B\times_M B &\to B\\
                (b_1,b_2)&\mapsto b_1b_2
            \end{split}
        \end{equation}
        such that $(B_x,\mu|_{B_x\times B_x})$ is a group for all $x\in M$. Moreover, the corresponding inverse map $B\to B$ should be continuous.
    \item each point $x\in M$ has a neighborhood $U$ and a homeomorphism $\phi \from U\times G \to \pi^{-1}(U)$ such that $\pi \circ \phi=\pr_U$, and for each $u\in U$ the map $\phi|_{\{u\}\times G} \from \{u\}\times G \to B_u$ is an isomorphism of groups.
    \end{itemize}
    A map of group bundles is a bundle map $f\from B \to B'$ such that $f|_{B_x} \from B_x \to B'_x$ is a morphism of groups for all $x\in M$. The group bundles and these maps together form the category of group bundles.
\end{Definition}
This definition is in line with e.g.\ \cite[p.~330]{Hatcher2001AlgebraicTopology} and \cite[Def.~2.8]{Mackenzie1989ClassificationBundle}. Here, we explicitly require that the group multiplications of the fibers fit together in a continuous bundle map. In addition, we also demand local triviality of the projection. Observe that vector bundles are a special case of group bundles; a vector bundle is a group bundle with a commutative model group and compatible scalar multiplication. It follows that all non-trivial vector bundles provide non-trivial group bundles, proving that not all group bundles are trivial.

We now wish to illustrate general group bundles by going over some basic results and examples. First, observe that transition functions take values in the group $\Aut(G)$. This clearly distinguishes group bundles from principal bundles, as the latter's transition functions take values in $G$, which is in general not isomorphic to $\Aut(G)$.
\begin{Lemma} \label{lem:trans func group bundle}
    The transition functions of a group bundle with model $G$ take values in the automorphism group $\Aut(G)$ of $G$.
\end{Lemma}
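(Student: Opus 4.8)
The plan is to unravel the definition of a transition function and observe that it is assembled entirely from the fiber-wise group isomorphisms supplied by the second bundle axiom. Concretely, I would start from two local trivializations $\phi_i \from U_i \times G \to \pi^{-1}(U_i)$ and $\phi_j \from U_j \times G \to \pi^{-1}(U_j)$ with overlapping domains, and consider the composite $\phi_j^{-1} \circ \phi_i$ on $(U_i \cap U_j)\times G$. Since both trivializations satisfy $\pi \circ \phi = \pr_U$, this composite commutes with the projection onto $U_i \cap U_j$, so it necessarily takes the form $(x,g) \mapsto (x, t_{ji}(x)(g))$ for a unique fiber map $t_{ji}(x) \from G \to G$. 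This $t_{ji}$ is the transition function, and the whole claim reduces to identifying each value $t_{ji}(x)$ as an element of $\Aut(G)$.

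The key step is to rewrite $t_{ji}(x)$ purely in terms of restrictions to single fibers. For fixed $x$, set $\phi_{i,x} := \phi_i|_{\{x\}\times G} \from G \to B_x$ and $\phi_{j,x} := \phi_j|_{\{x\}\times G} \from G \to B_x$, suppressing the identification of $\{x\}\times G$ with $G$. By construction $t_{ji}(x) = \phi_{j,x}^{-1} \circ \phi_{i,x}$. The second axiom in the definition of a group bundle asserts precisely that each $\phi_{i,x}$ and $\phi_{j,x}$ is an isomorphism of groups onto $B_x$. Hence $\phi_{j,x}^{-1}$ is a group isomorphism $B_x \to G$, and the composite $t_{ji}(x)$ is a group isomorphism $G \to G$, that is, an element of $\Aut(G)$, as claimed.

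The argument is essentially formal once the bookkeeping is in place, so I do not anticipate a genuine obstacle; the only point needing care is the first step. One must verify that $\phi_j^{-1}\circ\phi_i$ really leaves the base coordinate untouched, so that a well-defined fiber map $t_{ji}(x)$ exists. This follows from $\pi \circ \phi_i = \pr_{U_i}$ and $\pi \circ \phi_j = \pr_{U_j}$: applying $\pi$ shows that the $U$-component of $\phi_j^{-1}(\phi_i(x,g))$ equals $x$ independently of $g$. Beyond that, the continuity of $x \mapsto t_{ji}(x)$ into $\Aut(G)$ is irrelevant to the stated claim, which concerns only the values of the transition functions, so I would not dwell on it here.
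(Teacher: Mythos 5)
Your proposal is correct and follows essentially the same route as the paper: form the composite $\phi_j^{-1}\circ\phi_i$, note it fixes the base coordinate because both trivializations satisfy $\pi\circ\phi=\pr_U$, and conclude that each fiber-wise restriction is a composite of group isomorphisms $G\to B_x\to G$, hence an automorphism of $G$. The paper's version is just a terser statement of the same argument.
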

\begin{proof}
    Let us consider local trivializations $\phi_i\from U_i \times G \to B|_{U_i}$, $i=1,2$ such that the overlap $U:=U_1\cap U_2$ is non-empty. For $u\in U$, the maps $\phi_i|_{\{u\}\times G} \from G \to B_u$ are isomorphisms of groups. Hence the transition map $\phi_{21}=\phi_2^{-1} \phi_1 \from U\times G \to U\times G$ is identity on the $U$ component, hence restricting to any $u\in U$ yields an automorphism of $G$.
\end{proof}

Another property of group bundles is the existence of a global section. Given the fiber-wise group law axiom, each fiber $B_m$, with $m\in M$, is a group, hence in particular it has a unit $e_m:=e_{B_m}$. This gives rise to the map $\mathbf{1}\from M\to B$, $m\mapsto e_m$, which is a section of $\pi$, hence we call it the unit section of the group bundle. In the case of vector bundles, this section is known as the zero section, and provides an embedding of the base space $M$ in $B$. For general group bundles, there is the following statement.
\begin{Proposition} \label{prop:unit section}
    Given a group bundle $(\pi,B,M,G)$, the unit section $\mathbf{1} \from M\to B$ is an embedding. Moreover, if $G$ is discrete, then $B$ can be topologically partitioned into the image of $\mathbf{1}$, which is homeomorphic to $M$, and the complement of this image.
\end{Proposition}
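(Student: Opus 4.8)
The plan is to treat the two claims separately, first establishing that $\mathbf{1}$ is a topological embedding, and then upgrading this to a partition into open pieces under the discreteness hypothesis. For the embedding, I would lean on the general fact that every continuous section of a continuous map is automatically an embedding, so that the real work reduces to verifying continuity of $\mathbf{1}$.

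To check continuity, I would fix $x\in M$ together with a local trivialization $\phi \from U\times G \to \pi^{-1}(U)$ around it. Since each restriction $\phi|_{\{u\}\times G}$ is an isomorphism of groups, it carries the unit $e\in G$ to the unit $e_u$ of $B_u$; hence $\mathbf{1}(u)=\phi(u,e)$ for every $u\in U$. As $u\mapsto(u,e)$ and $\phi$ are both continuous, $\mathbf{1}|_U$ is continuous, and because continuity is a local property, $\mathbf{1}$ is continuous on all of $M$. The section identity $\pi\circ\mathbf{1}=\id_M$ then forces $\mathbf{1}$ to be injective and exhibits $\pi|_{\mathbf{1}(M)}$ as a continuous two-sided inverse to $\mathbf{1}\from M\to\mathbf{1}(M)$. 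Therefore $\mathbf{1}$ is a homeomorphism onto its image, i.e.\ an embedding, and in particular $\mathbf{1}(M)$ is homeomorphic to $M$.

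For the second claim, I would assume $G$ discrete, so that $\{e\}$ and $G\setminus\{e\}$ are both open in $G$. Working once more in a trivialization $\phi$ over $U$, I would identify $\mathbf{1}(M)\cap\pi^{-1}(U)=\mathbf{1}(U)=\phi(U\times\{e\})$ and its complement $\pi^{-1}(U)\setminus\mathbf{1}(U)=\phi(U\times(G\setminus\{e\}))$. Since $\phi$ is a homeomorphism while $U\times\{e\}$ and $U\times(G\setminus\{e\})$ are both open, these two sets are open in $\pi^{-1}(U)$, hence in $B$. As the sets $\pi^{-1}(U)$ cover $B$ when $U$ ranges over trivializing neighborhoods, both $\mathbf{1}(M)$ and $B\setminus\mathbf{1}(M)$ are open, yielding the partition $B=\mathbf{1}(M)\sqcup(B\setminus\mathbf{1}(M))$ into clopen pieces, with $\mathbf{1}(M)$ homeomorphic to $M$ by the first part.

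I do not expect a serious obstacle here; the only point needing genuine care is the continuity of $\mathbf{1}$, which rests on the observation that the fiberwise trivialization maps, being group isomorphisms, necessarily preserve units, so that $\mathbf{1}$ agrees locally with the manifestly continuous map $\phi(\cdot,e)$. Everything else is bookkeeping with local trivializations, and the discreteness hypothesis enters only to make $\{e\}$ open, which is precisely what forces both pieces of the partition to be open.
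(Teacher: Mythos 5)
Your proof is correct and follows essentially the same route as the paper's: identify $\mathbf{1}$ locally with $u\mapsto\phi(u,e)$ via the unit-preserving fiberwise isomorphisms, conclude it is an embedding from the section property, and use discreteness of $G$ to make $U\times\{e\}$ clopen so that the image and its complement partition $B$. Your write-up is somewhat more detailed than the paper's, but there is no difference in substance.
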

\begin{proof}
    Clearly $\mathbf{1}$ is injective. Locally on $U\subset M$, $\mathbf{1}$ looks like the embedding $U \to U\times G$, $u\mapsto (u,e)$, which shows that $\mathbf{1}$ is an embedding. If $G$ is discrete, then $U\times \{e\}$ is closed and open in $U\times G$. Hence the image of $\mathbf{1}$ is locally disjoint from other parts of $B$, hence is so globally as $\mathbf{1}$ is global.
\end{proof}
Again we emphasize a significant difference between group bundles and principal bundles. It is well-known that a principal bundle admitting a global section must be trivial. Here we see that group bundles always admit a global section, but this does not imply triviality of the bundle. Indeed, any non-trivial vector bundle provides a counterexample, but we will now go over some other examples. We will take $M$ to be the circle $S^1$. In this case, we may view a group bundle $B$ with model group $G$ as a quotient of the trivial group bundle $[0,1]\times G$ by gluing $\{0\}\times G$ and $\{1\}\times G$ by an automorphism of $G$.
\begin{Example}[$\Z_3$-bundles over the circle] \label{ex:z3 bundles circle}
    Let us view $\Z_3$ as the additive group of remainders modulo 3, whose elements are $\bar{1}$, $\bar{0}$ and $-\bar{1}$.
    The group $\Aut(\Z_3)$ consists of 2 elements, namely $\id$ and $-\id$. Naturally, gluing via $\id$ will yield the trivial bundle over $S^1$, i.e.\ the bundle $B$ is isomorphic to the group bundle $S^1\times \Z_3$. The unit section traces out the subspace $S^1\times \{\bar{0}\}$, which is indeed well separated from its complement and homeomorphic to the base $S^1$.
    
    Gluing along $-\id$ means that $\bar{1}$ and $-\bar{1}$ are exchanged upon returning, hence return to themselves after a second round. We may thus picture them to traverse the boundary of a M{\"o}bius band. The element $\bar{0}$, which follows the unit section, will again trace out a circle. Hence the total space of this bundle consists of a circle and the boundary of a M{\"o}bius band, hence is homeomorphic to 2 disjoint circles. It thus cannot be homeomorphic to the trivial bundle, as this consists of 3 disjoint circles. Compare the illustrations of the bundles in Fig.~\ref{fig:z3 bundles over circle}.
    
    \begin{figure}[h]
        \centering
        \includegraphics[width=0.4\textwidth]{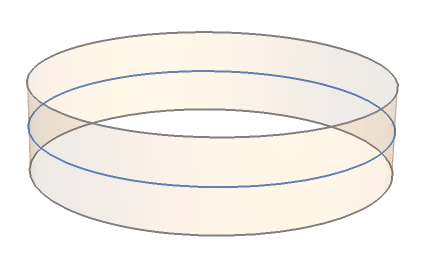}
        \includegraphics[width=0.4\textwidth]{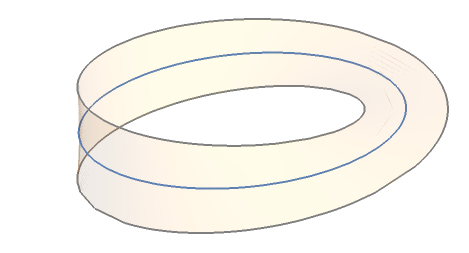}
        \caption{Representatives of the two non-isomorphic classes of $\Z_3$-bundles over the circle, see Ex.~\ref{ex:z3 bundles circle}. The shaded cylinder and M{\"o}bius band are visual aids. In both cases, the unit section traces out the middle circle, whereas the other group elements follow the boundary.}
        \label{fig:z3 bundles over circle}
    \end{figure}
\end{Example}
Observe that this example treats the most elementary non-trivial group bundle. Indeed, the circle requires at least 1 automorphism to be specified, and the smallest group possessing non-trivial automorphisms is $\Z_3$. Naturally, if $G$ is trivial, then $B\cong M$ is trivial as the fiber has only 1 point. In case $G=\Z_2$ the bundle will also be trivial; one can use Lemma~\ref{lem:trans func group bundle} to conclude that all transition functions are trivial, or alternatively use Prop.~\ref{prop:unit section} to remark that $B$ consists of two disjoint copies of $M$, hence is trivial.

The argument in Ex.~\ref{ex:z3 bundles circle} holds for any group $G$ with $\Aut(G) \cong \Z_2$. Indeed, the identity map in $\Aut(G)$ yields the trivial bundle $S^1\times G$, while the non-identity map yields a non-trivial bundle. For instance, picking $G=\Z$ similarly yields infinitely many copies of the boundaries in Fig.~\ref{fig:z3 bundles over circle}, which can be envisioned to lie on the infinite cylinder resp.\ infinite M{\"o}bius band. The choice $G=U(1)$ reproduces the familiar picture of a cylinder whose edge circles are glued to obtain a torus resp.\ a Klein bottle. As $\Z_4$ is naturally a subgroup of $U(1)$, the case $G=\Z_4$ can again be viewed as lines on a surface, where the surface is now given by a $U(1)$-bundle. Pictorially, one places 4 equidistant points on a circle, which become straight lines on a cylinder, and by gluing the edge circles of this cylinder one finds all lines coming back to themselves on the torus resp.\ 2 lines coming back and 2 lines exchanging on the Klein bottle.

A final remark is that the (isomorphism class of) a group bundle over the circle may be the same for different automorphisms of $G$. For one, if the automorphisms can be connected by a path, then the resulting bundles are isomorphic. As an example, if $G=\R$, then $\Aut(\R)=\R^\times\cong \R_{>0}\times \Z_2$, and the class of the obtained bundle only depends on the $\Z_2$ part. In addition, conjugate automorphisms of $G$ also yield isomorphic bundles. This we illustrate in Ex.~\ref{ex:z_2xz_2 bundles over circle}.
 
\begin{Example}[$\Z_2\times\Z_2$-bundles over the circle] \label{ex:z_2xz_2 bundles over circle}
    Let us consider $G=\Z_2\times \Z_2\cong \{1,a,b,c\}$, where $a^2=1$, $ab=ba=c$ and similarly for the cyclic permutations of these equations. The elements $a$, $b$ and $c$ thus satisfy identical relations, and indeed the automorphisms are given by permuting $a$, $b$ and $c$ amongst themselves. Hence $\Aut(G)$ is isomorphic to $S_3$.
    
    Although this group has 6 elements, there are less isomorphism classes. Let us go over all permutations in $\Aut(G)$. Naturally, the identity permutation will yield the trivial group bundle $S^1\times G$. All the transpositions will yield another class, as do the 3-cycles.
    
    Take $(ab)$ for instance. This means that we glue the 4 intervals in $[0,1]\times G$ such that the intervals of $a$ and $b$ form a single circle, whereas the intervals of 1 and $c$ will each form a separate circle. For $(bc)$ and $(ca)$ one will get essentially the same picture, revealing that the obtained bundles are isomorphic.
    For $(abc)$ and $(acb)$ a similar argument holds; now the intervals of $a$, $b$ and $c$ are connected to a single circle, and the only difference between the two gluings is the order in which the pieces of the circle are traversed.
    
    We thus see that the obtained bundle class is the same for automorphisms in the same conjugacy class of $\Aut(G)$. Moreover, each conjugacy class yields another class of group bundles; the obtained spaces are homeomorphic to 4, 3 and 2 circles respectively.
\end{Example}

\subsection{Group-space bundles}

Let us shift our attention to group-sets instead of groups. This brings us closer to principal bundles as these are a special class of group-space bundles. Indeed, the axioms on the fiber of a principal bundle are similar to the axioms of a group-set. We will investigate these objects in more detail here and later on in the paper, and so wish to review some basic statements on group-sets.
\begin{Definition}
    Let $G$ be a group with unit $e$. A $G$-set is a tuple $(F,A)$ where $F$ is a set and $A$ is a left action of $G$ on $F$, i.e.\ a map
    \begin{equation}
        A\from G \times F \to F, \quad    (g,f)\mapsto gf
    \end{equation}
    such that $\forall f\in F$ and $\forall g_1,g_2\in G$ one has $g_1(g_2f)=(g_1g_2)f$ and $ef=f$. For convenience, we will simply refer to the $G$-set as $F$, which we understand as a set endowed with a left action $A$ by $G$. In case $G$ is a topological group, $F$ a topological space and the action map $A$ is continuous, $F$ is called a $G$-space. Similarly, if $G$ is a Lie group, $F$ is a manifold and the action map $A$ is smooth, then $F$ is a $G$-manifold.
    
    A function $\alpha \from F\to F'$ between a $G$-set $F$ and a $G'$-set $F'$ is called $\xi$-equivariant, where $\xi \from G \to G'$ is a homomorphism, if
    \begin{equation}
        \alpha(gf)=\xi(g)\alpha(f), \quad \forall f\in F, \forall g\in G.
    \end{equation}
    In particular, a $G$-map is a function between two $G$-sets that is $\id_G$-equivariant, i.e.\ $\alpha(gf)=g\alpha(f)$.
    In case $F$ and $F'$ are $G$-spaces or $G$-manifolds, the map $\alpha$ is also required to be continuous resp.\ smooth. Moreover, the category formed by $G$-sets and $G$-maps is called the category of $G$-sets. Similarly, one has the category of $G$-spaces and $G$-manifolds.
    
    A $G$-set is called free resp.\ transitive if the action is free resp.\ transitive, which is equivalent to the map
    \begin{equation}
        \begin{split}
            G\times F &\to F \times F\\
            (g,f) &\mapsto (gf,f)
        \end{split}
    \end{equation}
    being injective resp.\ surjective. Denoting the action quotient map as $q\from F \to F/G$, the image of the above map, i.e.\ all pairs of elements in the same orbit, can be written as
    \begin{equation}
        F\times_q F=\set{(f',f)\in F\times F}{q(f)=q(f')}.
    \end{equation}
    If $F$ is both free and transitive, then $F$ is called a principal homogeneous space for $G$, or $G$-torsor.
\end{Definition}
The fibers of a principal $G$-bundle are thus $G$-torsors. Of course, a particular $G$-torsor is the underlying space of $G$ where the action is given by left translation. Clearly, any $G$-torsor $F$ is isomorphic to this $G$-torsor; any orbit map establishes an isomorphism of $G$-torsors. A straightforward generalization of principal bundles are thus group-space bundles, which can be defined as follows.
\begin{Definition}
    Let $G$ be a topological group and $F$ a $G$-space. A \emph{$G$-space bundle with model $F$} is a fiber bundle $(\pi,B,M,F)$ such that
    \begin{itemize}
        \item each fiber of $B$ is a $G$-space. That is, there is a bundle map
         \begin{equation}
            \begin{split}
               A \from G\times B &\to B\\
                (g,b)&\mapsto gb
            \end{split}
        \end{equation}
        such that $(B_x,A|_{G\times B_x})$ is a $G$-space for all $x\in M$.
        \item each point $x\in M$ has a neighborhood $U$ and a homeomorphism $\phi \from U\times F \to \pi^{-1}(U)$ such that $\pi \circ \phi=\pr_U$, and for each $u\in U$ the map $\phi|_{\{u\}\times F} \from \{u\}\times F \to B_u$ is an isomorphism of $G$-spaces.
    \end{itemize}
    Let $B$ be a $G$-space bundle and $B'$ be a $G'$-space bundle. A map of group-space bundles $f\from B \to B'$ is a bundle map such that $f|_{B_x}\from B_x \to B'_x$ is $\xi$-equivariant for all $x\in M$, i.e.\ $f(gb)=\xi(g)f(b)$ for all $g\in G$ and $b\in B$, where $\xi \from G \to G'$ is a homomorphism.
\end{Definition}
This defines the category of group-space bundles. From it, one obtains special subcategories by restricting the model group-space to a prescribed class. The principal bundles are readily recognized as the restriction of group-space bundles to the case where the model fiber is a torsor. The group-space bundles are more general in the following sense.
\begin{Lemma}
    The principal bundles are a full subcategory of the group-space bundles.
\end{Lemma}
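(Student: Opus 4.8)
The plan is to realize the principal bundles as precisely those group-space bundles whose model fiber is a torsor, and then to check that the two notions of morphism agree on this class. Showing it is a \emph{full} subcategory amounts to two tasks: first, identifying the objects of the principal bundle category with a subclass of the group-space bundles, and second, verifying that every group-space bundle morphism between two such objects is in fact a principal bundle morphism (and conversely, which gives faithfulness).

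For the objects, I would recall that a $G$-torsor is by definition a free and transitive $G$-space, hence in particular a $G$-space. The local trivializations $\phi \from U\times G \to \pi^{-1}(U)$ of a principal $G$-bundle are $G$-equivariant, with $G$ acting on itself by left translation so as to become a torsor; this is exactly the condition that each $\phi|_{\{u\}\times G}$ be an isomorphism of $G$-spaces. Thus every principal $G$-bundle is a $G$-space bundle whose model fiber $F=G$ is a torsor. Conversely, if $(\pi,B,M,F)$ is a $G$-space bundle with $F$ a torsor, then each $\phi|_{\{u\}\times F} \from F\to B_u$ is an isomorphism of $G$-spaces, so every fiber $B_u$ is isomorphic to the torsor $F$ and is therefore itself a torsor; hence $B$ is a principal bundle. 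This matches the objects of the two categories.

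For the morphisms, I would observe that a morphism of group-space bundles $f\from B\to B'$ is, by definition, a bundle map whose fiber-wise restrictions $f|_{B_x}$ are $\xi$-equivariant for some homomorphism $\xi \from G\to G'$. When $B$ and $B'$ are principal bundles, this is word-for-word the defining condition of a morphism of principal bundles. Consequently the morphism sets of the two categories coincide on principal bundles, so the inclusion forgets no structure on morphisms (faithful) and omits no morphisms (full).

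The one step requiring genuine care is the converse direction for objects, namely confirming that a torsor model fiber forces \emph{every} fiber to be a torsor. This follows from the fact that the local trivializations are fiber-wise $G$-space isomorphisms, together with the observation that any $G$-space isomorphic to a torsor is again a torsor, since both freeness and transitivity are preserved under equivariant bijections. Beyond this small verification, the statement reduces to aligning the two definitions, so I do not anticipate a substantive obstacle; the content of the lemma is really the recognition that the group-space bundle framework specializes cleanly to the principal case.
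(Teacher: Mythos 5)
Your proposal is correct and follows essentially the same route as the paper: principal bundles are the group-space bundles with torsor model fiber, and the morphism conditions coincide fiber-wise, giving fullness. The paper's own proof is just a terser version of this argument, so the extra verification you supply (that a torsor model fiber forces every fiber to be a torsor) is a harmless elaboration rather than a different approach.
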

\begin{proof}
    As a $G$-torsor is a $G$-space, any principal bundle is a group-space bundle. In addition, any map of principal bundles is a map of group-space bundles. This proves the subcategory claim. It is full as any group-space bundle map between principal bundles is an equivariant bundle map, hence a map of principal bundles.
\end{proof}

A significant difference between principal bundles and general group-space bundles is the way in which the structure group can describe changes in the fiber. This is relevant already for the transition functions, but also when one considers holonomy. The nature of the transition functions can be found in the standard way, as done for group bundles previously, and results in the following.
\begin{Lemma}
    The transition functions of a group-space bundle with model fiber $F$ take values in the automorphism group $\Aut(F)$, i.e.\ the group of invertible group-space maps $F\to F$.
\end{Lemma}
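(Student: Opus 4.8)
The plan is to mirror the argument already used for group bundles in Lemma~\ref{lem:trans func group bundle}, simply replacing the phrase ``isomorphism of groups'' by ``isomorphism of $G$-spaces'' throughout. First I would invoke the local triviality axiom of a $G$-space bundle to fix two local trivializations $\phi_i\from U_i\times F \to B|_{U_i}$, $i=1,2$, whose overlap $U:=U_1\cap U_2$ is non-empty. By the defining property of a $G$-space bundle, for every $u\in U$ the restriction $\phi_i|_{\{u\}\times F}\from F\to B_u$ is an isomorphism of $G$-spaces.

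Next I would form the transition map $\phi_{21}:=\phi_2^{-1}\phi_1\from U\times F \to U\times F$. Since both trivializations satisfy $\pi\circ\phi_i=\pr_U$, this composite fixes the $U$-component, so it has the form $(u,f)\mapsto(u,\phi_{21,u}(f))$ for a family of maps $\phi_{21,u}\from F\to F$. Restricting to a single $u\in U$, the map $\phi_{21,u}$ is the composition $\phi_{2,u}^{-1}\circ\phi_{1,u}$ of the two $G$-space isomorphisms $\phi_{1,u}\from F\to B_u$ and $\phi_{2,u}^{-1}\from B_u\to F$.

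The final step is to note that $G$-space isomorphisms are closed under composition and inversion, so that $\phi_{21,u}\from F\to F$ is itself an invertible $G$-map, i.e.\ an element of $\Aut(F)$. Concretely, a composite of $\xi$-equivariant maps with $\xi=\id_G$ is again $\id_G$-equivariant, and the set-theoretic inverse of an equivariant homeomorphism is equivariant as well; together these make the $G$-space isomorphisms of $F$ into a group under composition, which is precisely $\Aut(F)$.

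I expect no genuine obstacle here. The only point deserving a word is the closure of $G$-space isomorphisms under composition and inversion, and this is immediate from the equivariance of composites and the fact that the inverse of an equivariant bijection is again equivariant. In categorical terms, the argument is identical to the group-bundle case with the category $\mathcal{C}$ of topological groups replaced by the category of $G$-spaces; the general principle is that transition functions of any fiber bundle with fiber in $\mathcal{C}$ take values in the automorphism group of the model fiber computed in $\mathcal{C}$.
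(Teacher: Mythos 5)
Your proposal is correct and is exactly the argument the paper intends: the paper omits the proof, remarking only that ``the nature of the transition functions can be found in the standard way, as done for group bundles previously,'' i.e.\ by repeating the proof of Lemma~\ref{lem:trans func group bundle} with group isomorphisms replaced by $G$-space isomorphisms, which is precisely what you do.
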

Let us first relate this back to principal $G$-bundles, in which case $F$ may be taken as the $G$-torsor $G$. The left-equivariant invertible maps are exactly the right-translations, hence $\Aut(F)\cong G$. In other words, the group $G$ faithfully describes all relevant changes in the fiber $F$. This need not be so for general group-spaces $F$. Intuitively speaking, if $F$ is not free then $G$ has redundant elements, and if $F$ is not transitive then $G$ is too small. A more formal description of $\Aut(F)$ is given in Appendix~\ref{sec:Aut of G-set}.

Another significant difference is the internal structure of the fiber $F$. By this we mean that $F$ itself can have an interesting topology. Let us consider the quotient map $q\from F\to F/G$. Remarkably, if $q$ is locally trivial, the tuple $(q,F,F/G,G)$ is itself a group-space bundle. This bundle need not be trivial, in fact one can take $F$ to be (the total space of) any non-trivial principal $G$-bundle. This means that non-trivial topology can appear not only on the global level, but also already on the fiber level.

We will restrict ourselves to a certain class of group-spaces, for which the fibers have both a clear automorphism group and the quotient map is trivial. This can be done by requiring the group-spaces to be free, but instead of having a single orbit they are allowed to have more, as long as these form a partition of the group-space in open and closed subsets. In other words, we consider $G$-spaces $F$ such that $F$ is free and $F/G$ is discrete. It readily follows that $F$ must be isomorphic to $G\times F/G$ as $G$-spaces, which reveals the trivial topology. As all such $G$-spaces are of the form $\sqcup_{F/G} G$, i.e.\ a disjoint union of `standard' torsors, we propose to call them semi-torsors. We will start phrasing the results for manifolds, and leave the topological equivalents as implicit.
\begin{Definition}
    Let $G$ be a Lie group and $F$ a $G$-manifold. Then $F$ is a \emph{semi-torsor} if $F$ is free and $F/G$ is discrete.
\end{Definition}
Clearly, every torsor is a semi-torsor; in this case $F/G$ consists of a single point, hence is discrete. As principal bundles correspond to torsors, the bundles corresponding to the semi-torsors will form a generalization of the principal bundles. We call these bundles semi-principal bundles as they locally look like a sum of principal bundles. A schematic overview of how these bundles relate to group-spaces and other classes of bundles can be found in Fig.~\ref{fig:overview group-space and group-space bundle}.
\begin{Definition}
    Given a Lie group $G$, a \emph{semi-principal $G$-bundle} is a group-space bundle $\pi \from B \to M$ where the model fiber $F$ is a $G$-semi-torsor. That is, $B$ is endowed with a fiber-preserving $G$-action, local trivializations of $B$ are $G$-equivariant, and the model fiber $F$ is a free $G$-manifold such that $F/G$ is discrete.
\end{Definition}

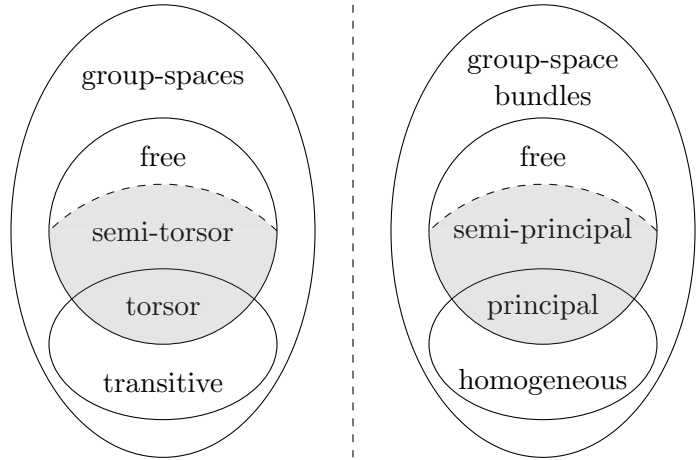
\begin{figure}[h]
    \centering
    \begin{tikzpicture}
        \draw (0,2) ellipse (2 and 3) (0,4) node{group-spaces};
        \draw (0,2) circle[radius=1.5] (0,3) node{free};
        \draw (0,0.5) ellipse (1.5 and 1) (0,0) node{transitive};
        \draw[dashed] (1.5,2) arc (45:135:2.121) (0,1) node{torsor} (0,2) node{semi-torsor};
        \fill[color=gray, opacity=0.2] (1.5,2) arc (45:135:2.121) -- (1.5,2) arc (0:-180:1.5);

        \draw (5,2) ellipse (2 and 3) (5,4) node[align=center]{group-space\\ bundles};
        \draw (5,2) circle[radius=1.5] (5,3) node{free};
        \draw (5,0.5) ellipse (1.5 and 1) (5,0) node{homogeneous};
        \draw[dashed] (6.5,2) arc (45:135:2.121) (5,1) node{principal} (5,2) node{semi-principal};
        \fill[color=gray, opacity=0.2] (6.5,2) arc (45:135:2.121) -- (6.5,2) arc (0:-180:1.5);
        
        \draw[dashed] (2.5,5) -- (2.5,-1);
    \end{tikzpicture}
    \caption{Correspondence between classes of group-spaces and the classes of group-space bundles having these as fibers. This paper focuses on the gray parts, i.e.\ the semi-torsors and semi-principal bundles, which extend the torsors and principal bundles.}
    \label{fig:overview group-space and group-space bundle}
\end{figure}

Unlike principal bundles, the projection map $\pi \from B \to M$ of a semi-principal bundle need not coincide with the action quotient $Q \from B \to B/G$, where $B/G$ is endowed with the unique smooth manifold structure such that $Q$ becomes a surjective submersion. Indeed, when taking the action quotient, the fiber $F$ is reduced to $F/G$, which need not consist of a single point. Nevertheless, this new fiber is a discrete space, and as proven below $B/G$ forms a covering of $M$. As $Q$ will still define a principal bundle, the following result says that any semi-principal $G$-bundle is a principal $G$-bundle on top of a covering space.
\begin{Proposition} \label{prop:semi-prin = prin + covering}
    Let $\pi \from B\to M$ be a semi-principal $G$-bundle with model fiber $F$, set $X=F/G$. The map $Q \from B \to B/G$ defines a principal $G$-bundle. Moreover, the quotient space $B/G$ is itself an $X$-bundle over $M$, i.e.\ a regular $|X|$-fold covering. This bundle is defined by the reduced map $\pi/G \from B/G \to M$ given by the relation $\pi =\pi/G \circ Q$, i.e.\
    \begin{equation}
        \begin{tikzcd}[column sep=1em]
            B \ar[swap]{rd}{\pi} \ar{rr}{Q} && B/G \ar[dashed]{ld}{\pi/G}\\
            & M
        \end{tikzcd}
    \end{equation}
    is a commutative diagram of semi-principal bundles.
\end{Proposition}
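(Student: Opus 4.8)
\section*{Proof proposal}

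The plan is to reduce everything to the explicit local model of a semi-torsor. Since $F$ is a $G$-semi-torsor, the discussion preceding its definition supplies an isomorphism of $G$-spaces $F \cong G \times X$ with $X = F/G$ discrete and $G$ acting by left translation on the first factor, i.e.\ $g\cdot(h,x) = (gh,x)$. Fixing a local trivialization $\phi \from U \times F \to B|_U$ as in the definition of a semi-principal bundle, I therefore get $B|_U \cong U \times G \times X$ with $G$ acting fiberwise by $g\cdot(u,h,x) = (u,gh,x)$. Every assertion below will be obtained by reading off a statement about this explicit model through the $\phi$'s and then checking compatibility on overlaps.

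First I would record that $G$ acts freely on $B$: freeness is a fiberwise condition and each fiber is isomorphic to the free $G$-space $F$. Reading off the local model, the orbit space $B|_U/G$ is $U \times X$ and the restriction of $Q$ is the projection $(u,h,x)\mapsto(u,x)$. Because $X$ is discrete, each $U \times \{x\}$ is open in $B/G$ and $Q^{-1}(U\times\{x\}) \cong U \times G$ is a trivial principal $G$-bundle over it; these charts realize the smooth structure on $B/G$ and simultaneously serve as principal-bundle trivializations, so $Q \from B \to B/G$ is a principal $G$-bundle. Next, $\pi$ is constant on $G$-orbits, since the action is fiber-preserving and hence $\pi(gb)=\pi(b)$; it therefore factors uniquely through the quotient as $\pi = (\pi/G)\circ Q$, which is exactly the asserted commuting triangle, and in the local model $\pi/G$ is simply the projection $U \times X \to U$. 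As this exhibits, around each point of $M$, a neighborhood $U$ over which $B/G$ is $U \times X$ with $X$ discrete, $\pi/G$ is a fiber bundle with discrete fiber $X$, i.e.\ an $|X|$-fold covering.

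The one point that needs genuine care is the consistency of these constructions across overlapping charts, and this is where the structure of $\Aut(F)$ enters. By the transition-function lemma the transition maps of $B$ take values in $\Aut(F)$, which for $F = G\times X$ is the wreath product $G \wr \Sym(X) \cong G^{X}\rtimes \Sym(X)$ (see Appendix~\ref{sec:Aut of G-set}). I would split each such automorphism into its $\Sym(X)$-part, permuting the orbits, and its $G^{X}$-part, acting by right translations within the orbits. The right-translation part is precisely what makes $Q$ a well-defined principal $G$-bundle: it acts on the $G$-fibers by right multiplication and so commutes with the left $G$-action, giving valid principal transition functions. The $\Sym(X)$-part is precisely the transition data of the covering $\pi/G$, taking values in $\Sym(X)$. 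Verifying that these two pieces are smooth and satisfy the cocycle condition, which they inherit directly from the cocycle condition of the $B$-transitions, is what guarantees that the locally defined manifold and bundle structures glue to the global ones; this bookkeeping, rather than any single hard estimate, is the main obstacle I anticipate. The word \emph{regular} then refers to $\pi/G$ being a genuine locally trivial covering, a fiber bundle with discrete fiber, which is immediate from the local model.
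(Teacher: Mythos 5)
Your proposal is correct and takes essentially the same route as the paper: pass to the local model $U\times G\times X$ via a $G$-equivariant trivialization, observe that quotienting kills the $G$-factor to give $U\times X$ (the covering $\pi/G$) while the remaining $G$-factor over each $U\times\{x\}$ gives the principal bundle $Q$. The only presentational difference is that the paper takes the smooth structure on $B/G$ as already given by the free quotient and constructs the induced local trivializations $\phi/G$ and $\phi\circ((\phi/G)^{-1}\times\id_G)$ directly, whereas you rebuild that structure from charts and check the gluing via the $G\wr X$-valued transition cocycle split into its $G^X$ and $\Sym(X)$ parts; both verifications amount to the same computation.
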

\begin{proof}
    As $\pi$ is $G$-invariant, it is constant on the fibers of $Q$. As $Q$ is a smooth submersion, the map $\pi/G$ is a well-defined smooth map \cite[Thm 4.30]{Lee2012IntroductionManifolds}. To see the local forms of $Q$ and $\pi/G$, let us use a $G$-equivariant local trivialization $\phi \from U \times X\times G \to B|_U$ of $\pi$. One can reduce $\phi$ to orbits, and as $(B|_U)/G=(B/G)|_U$ this results in the commutative diagram
    \begin{equation}
        \begin{tikzcd}[column sep=1em]
            U \times X \times G \ar{d}{\phi} \ar{rr}{\pr_{U\times X}} && U \times X \ar[dashed]{d}{\phi/G}\\
            B|_U \ar{rr}{Q} \ar[swap]{rd}{\pi} && (B/G)|_U \ar{ld}{\pi/G}\\
            & U
        \end{tikzcd}
    \end{equation}
    where all solid arrows are smooth.
    
    Let us consider the induced map $\phi/G$. It is smooth as $Q\circ \phi$ is a smooth map that is constant on the fibers of the smooth submersion $\pr_{U\times X}$. In fact, $\phi/G$ is a diffeomorphism. Being the quotient of a bijection, it is bijective. Its inverse is smooth as well; $\pr_{U\times X}\circ \phi^{-1}$ is a smooth map that is constant on the fibers of the smooth submersion $Q$. Hence $\phi/G$ is a local trivialization of $\pi/G$ provided that $\pi/G \circ \phi/G=\pr_{U}$. The latter can be deduced from
    \begin{equation}
        (\pi/G \circ \phi/G)(u,x)=(\pi/G \circ Q\circ \phi)(u,x,g)=(\pi\circ \phi)(u,x,g)=\pr_U(u,x,g)=u,
    \end{equation}
    where $g\in G$ may be chosen arbitrarily because of $G$-equivariance. Hence $\pi/G$ is an $X$-bundle over $M$, and as $X$ is discrete this is a covering.
    
    Shifting our attention to $Q \from B\to B/G$, using the same maps one can find a local trivialization for $Q$. This is $\phi\circ ((\phi/G^{-1}\times \id_G) \from (B/G)|_U \times G \to B|_U$, $([b],g) \mapsto \phi((\phi/G)^{-1}([b]),g)$. This diffeomorphism is $G$-equivariant as $\phi$ is, and satisfies 
    \begin{equation}
        Q \circ \phi\circ (\phi/G^{-1}\times \id_G)=\phi/G\circ \pr_{U\times X}\circ (\phi/G^{-1}\times \id_G)=\phi/G\circ \phi/G^{-1}\circ \pr_{(B/G)|_U}=\pr_{(B/G)|_U},
    \end{equation}
    hence is a local trivialization of $Q$. This implies $Q$ is a principal $G$-bundle.
\end{proof}

This result clearly reveals two special cases of semi-principal bundles. Naturally, one case is where the bundle is actually principal, which means that $\pi$ and $Q$ coincide. The other special case is that $\pi$ and $\pi/G$ coincide, which happens if and only if $G$ is trivial. Here the definition of semi-principal bundle reduces to that of a fiber bundle with discrete fiber, i.e.\ a regular covering space.

An example of a generic semi-principal bundle can be found in Ex.~\ref{ex:winding torus 1} below. Here we consider the trivial principal $U(1)$-bundle over the circle $S^1$, i.e.\ the torus, but we wind it around multiple times to obtain a coil-like object. The resulting projection is no longer a principal bundle, but it does remain semi-principal. We will repeatedly build upon this example throughout the paper by illustrating new results with it.

\begin{Example}[Winding torus] \label{ex:winding torus 1}
    Let us consider the torus $T^2=S^1\times S^1$, endowed with the standard $U(1)$-action on its second factor. The projection $\pi \from T^2 \to S^1$ on the first factor then realizes the torus as a trivial principal $U(1)$-bundle over the circle. However, we can also view $T^2$ as a semi-principal bundle over $S^1$ in the following way. Consider the map $s_k \from S^1 \to S^1$, with $k\geq 1$ an integer, given by winding the circle $k$ times around, i.e.\ $z\mapsto z^k$ for $z$ on the unit circle in $\C$. Then $\Pi_k:=s_k\circ \pi \from T^2 \to S^1$ winds the torus $k$ times around the circle. The picture to have in mind is illustrated in Fig.~\ref{fig:winding torus} for the case $k=2$. Clearly, $\Pi_k$ defines a semi-principal $U(1)$-bundle with model fiber $\sqcup^k S^1$, i.e.\ $k$ disjoint circles. For $k>1$, so that $\Pi_k\ne \pi$, the bundle $\Pi_k$ is non-trivial, as can be deduced using topology. The trivial semi-principal $U(1)$-bundle over $S^1$ with model fiber $\sqcup^k S^1$ is given by the projection $S^1\times (\sqcup^k S^1) \to S^1$ on the first factor, in particular the total space consists of $k$ disjoint tori. As the total space of $\Pi_k$ is always a single torus, it follows that $\Pi_k$ is non-trivial for $k>1$.
    
    \begin{figure}[h]
        \centering
        \includegraphics[width=0.5\textwidth]{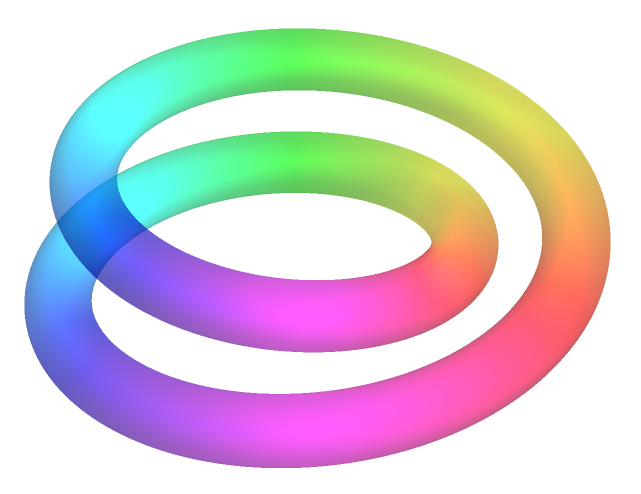}
        \caption{Illustration of the total space of $\Pi_2$ in Ex.~\ref{ex:winding torus 1}. This space is topologically a torus, but the projection $\Pi_2$ indicates that it winds twice around its base circle. Points with the same color are projected to the same point in $S^1$.}
        \label{fig:winding torus}
    \end{figure}
\end{Example}

One observes that the gluing needed to obtain $\Pi_k$ cannot be described using an element in $U(1)$ if $k>1$. Here lies our main motivation for the following material; one can pass over to a principal bundle formulation such that the gluing, and also holonomies, can again be described using the structure group.

\section{Bases of a group-set} \label{sec:bases of group-sets}

In this section we will study a notion of basis, also known as frame, of a group-set. In particular, we will consider the space formed by all the frames of a group-set. This will include the study of the symmetry group of these frames, which will be a wreath product. This group will reappear in Sec.~\ref{sec:semi-prin and frame bundle} as the structure group when we return to bundle theory.

\subsection{Basis of a group-set}

Similar to the notion of basis of a vector space, one can define the basis of a $G$-set $F$. Intuitively speaking, such a basis should be a tuple $(f_x)_{x\in X}$ of elements of $F$, where $X$ is the indexing set, such that any point in $F$ can be `reached' using these points in a unique way, and that a $G$-map from $F$ to any other $G$-set is specified once the images of the $f_x$ are known. We will use that a tuple $(f_x)_{x\in X}$ is equivalent to a function $\tilde{f} \from X \to F$ by the relation $\tilde{f}(x)=f_x$, and we will often use the $\tilde{f}$ formalism throughout this paper for convenience. We will use tilde notation to indicate tuples throughout this paper.

Let us first discuss how this `reaching of points' can be phrased in a formal way, staying in close analogy to vector spaces. In linear algebra, if $V$ is a real vector space, then given a tuple $\tilde{v}:=(v_1,\dots,v_m)$ of vectors in $V$, there is the associated map from coefficients to the vector space as
\begin{equation}
    \begin{split}
        \phi_{\tilde{v}} \from \R^m &\to V\\
        (c_1,\dots,c_m) &\mapsto \sum_{i=1}^m c_iv_i.
    \end{split}
\end{equation}
As this map is already linear, it is an isomorphism of vector spaces if and only if it is bijective. This question is usually subdivided in the properties of linear independence and spanning set. That is, the tuple $\tilde{v}$ is linearly independent if and only if $\phi_{\tilde{v}}$ is injective, and similarly the tuple is a spanning set if and only if $\phi_{\tilde{v}}$ is surjective. If $V$ admits a finite basis, then all bases of $V$ have the same number of vectors, and the dimension of $V$ is defined to be this finite number. If not, then $V$ is called infinite-dimensional.

This argument can almost fully be copied once we agree on a specific $G$-map induced by a tuple $(f_x)_{x\in X}$ of $G$-set elements. Of course, in this case no addition $+$ is available, but there is some sort of scaling given by the group action; $(g,f)\mapsto gf$. It is now straightforward to extend this map to tuples; we define this map below and also extend the definition of a basis to $G$-sets. Here we agree on the canonical $G$-set structure on $G \times X$ given by the action $g(h,x)=(gh,x)$.
\begin{Definition}
    Let $X$ be an index set, and consider the set $F^X$ of functions $\tilde{f}\from X \to F$, or equivalently the set of ordered tuples $(f_x)_{x\in X}$ with $f_x\in F$ for all $x\in X$. Given such a tuple, we define its associated map to be the $G$-map
    \begin{equation}
        \begin{split}
            \phi_{\tilde{f}} \from G \times X &\to F\\
            (g,x) &\mapsto g\cdot f_x=g\cdot \tilde{f}(x).
        \end{split}
    \end{equation}
    A tuple $\tilde{f}$ is called a \emph{basis} of $F$ if the associated map $\phi_{\tilde{f}}$ is an isomorphism of $G$-sets. The subset of $F^X$ consisting of all bases of $F$ we denote as $\Fr(F)$.
\end{Definition}

Before discussing the existence of these bases, let us first consider the following. Namely, the scaling of vector spaces may differ in a crucial way from the group action used here. If one wants to preserve some idea of linear independence, it is important that $g_1f=g_2f$ implies $g_1=g_2$. However, this need not hold for a general group action; it holds only for free actions. Hence, one may expect that the idea of a basis only matches intuition when $F$ is free. This is indeed sufficient; the map $\phi_{\tilde{f}}$ is an isomorphism if and only if it is bijective, so we need both injectivity and surjectivity. The map $\phi_{\tilde{f}}$ is surjective if and only if every orbit contains at least one of the $f_x$, and if $F$ is free, then $\phi_{\tilde{f}}$ is injective if and only if the $f_x$ lie in different orbits. This indeed mimics the properties of linear independence and spanning set. In fact, having a basis and being free is equivalent, as stated in the following lemma. We also prove the intuitive statement that all bases of a given $G$-set have the same cardinality; this `dimension' is now the number of orbits $|F/G|$, which can be infinite.
\begin{Lemma} \label{lem:G-basis iff free}
    A $G$-set $F$ has a basis if and only if $F$ is free. Moreover, for a free $G$-set $F$ the index set $X$ is unique up to bijection, in particular $X\cong F/G$.
\end{Lemma}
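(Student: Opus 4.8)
The plan is to prove the two directions of the biconditional separately and then handle the cardinality claim, leaning on the criteria for injectivity and surjectivity of $\phi_{\tilde{f}}$ already spelled out in the paragraph preceding the lemma.

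For the direction \emph{basis $\Rightarrow$ free}, I would first observe that the canonical $G$-set $G\times X$ is itself free: if $g(h,x)=(h,x)$ then $(gh,x)=(h,x)$, forcing $gh=h$ and hence $g=e$. Since a basis provides a $G$-isomorphism $\phi_{\tilde{f}}\from G\times X\to F$, and freeness is plainly preserved under isomorphism of $G$-sets, $F$ must be free. Concretely, if $gf=f$ in $F$, I would write $f=\phi_{\tilde{f}}(h,x)$ and use equivariance together with injectivity of $\phi_{\tilde{f}}$ to conclude $g=e$.

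For the direction \emph{free $\Rightarrow$ basis}, I would construct a basis explicitly. Take $X=F/G$ and, invoking the axiom of choice, pick one representative $f_x\in F$ in each orbit $x$; this defines a tuple $\tilde{f}\from X\to F$. Since every element of $F$ lies in some orbit, it is of the form $g f_x$, so $\phi_{\tilde{f}}$ is surjective; and since the $f_x$ lie in distinct orbits while $F$ is free, $\phi_{\tilde{f}}$ is injective by the criterion already noted. A bijective $G$-map is automatically an isomorphism of $G$-sets, its set-theoretic inverse being again equivariant, so $\tilde{f}$ is a basis.

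For the cardinality statement, I would argue that any basis exhibits the index set as the orbit set of $F$. Given a basis $\phi_{\tilde{f}}\from G\times X\to F$, passing to orbit spaces yields a bijection $(G\times X)/G\to F/G$; since the orbits of $G\times X$ are exactly the slices $G\times\{x\}$, the left-hand side is canonically $X$, whence $X\cong F/G$. As this holds for \emph{every} index set admitting a basis, all such index sets are in bijection with $F/G$, hence with one another. The only genuinely delicate point is the appeal to the axiom of choice in the infinite-orbit case; everything else is a routine transfer of structure along the isomorphism $\phi_{\tilde{f}}$.
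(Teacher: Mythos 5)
Your proposal is correct and follows essentially the same route as the paper: transfer of freeness along $\phi_{\tilde{f}}$, an orbit-representative construction (via choice) for the converse, and identification of the index set with $F/G$. The only cosmetic difference is in the last step, where you pass a single basis to orbit spaces to get $X\cong F/G$ directly, while the paper first compares two bases via $\phi_{\tilde{f}_2}^{-1}\phi_{\tilde{f}_1}$ and then notes that $F/G$ is itself an index set; both arguments are equally valid.
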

\begin{proof}
    If $F$ has a basis, then $F\cong G\times X$ as $G$-sets. As the latter is free, so is $F$. Conversely, assume $F$ is free. Then every orbit in $F$ is bijective to $G$. We only need to specify base points for each orbit; these base points will form the basis. That is, for each orbit $x \in F/G$, pick a point in this orbit and denote it by $s(x)$. This defines a section $s$ of the quotient map $q \from F \to F/G$. To check that $s$ is a basis of $F$, we consider its induced map $\phi_s \from G \times F/G \to F$ given by $(g,x) \mapsto g s(x)$. This has an explicit inverse 
    \begin{equation}
        f\mapsto ([f/s(q(f))],q(f)),
    \end{equation}
    where $[f/s(q(f))] \in G$ is the group element such that $f=[f/s(q(f))]s(q(f))$. This element is well-defined; it exists as $f$ and $s(q(f))$ lie in the same orbit, and it is unique as $F$ is free. Hence $s$ is a basis of $F$ with index set $X=F/G$.
    
    Assuming $F$ is free, pick two bases $\tilde{f}_1$ and $\tilde{f}_2$ with index sets $X_1$ resp.\ $X_2$. Then $\phi_{\tilde{f}_2}^{-1}\phi_{\tilde{f}_1} \from G\times X_1 \to G \times X_2$ is a $G$-isomorphism. Hence the orbit sets are bijective, which yields $X_1\cong X_2$. Hence all index sets of the $G$-set $F$ are bijective. We just saw that $F/G$ is always an index set, hence any other index set $X$ must be bijective to $F/G$.
\end{proof}
Given this result, we will restrict ourselves to free $F$ from now on. Also, we take $X=F/G$, but sometimes still write $X$ for clarity. In addition, one may observe that in the previous proof one does need $s$ to be a section; an orbit may be sent to a point in another orbit, as long as all orbits are visited exactly once. We like to emphasize this freedom at certain places by not using the quotient notation.

We remark that $\phi_{\tilde{f}}$ has an interesting interpretation concerning the model of a $G$-set. Given a finite-dimensional real vector space $V$, one often identifies it with $\R^m$ for some $m$ using a linear isomorphism. In this way, the vector spaces $\R^m$, $m\in \mathbb{N}$, provide models for real (finite-dimensional) vector spaces. One may then ask what the model space is in case of $G$-sets, and the above indicates that this is $G\times X$. This is valid for free $G$-sets, as any free $F$ is isomorphic to $G\times F/G$, hence of the correct form. The bases are then the labels of such isomorphisms $F \cong G \times X$.

We finish this inspection of individual bases with the following statements on $G$-maps in relation to bases. Again, we emphasize the similarity with the theory of vector spaces.
\begin{Lemma} \label{lem:G-map properties}
    Let $F,F'$ be $G$-sets and $\alpha \from F \to F'$ a $G$-map. Then:
    \begin{enumerate}
        \item for any $\tilde{f}\in F^X$, one has $\phi_{\alpha(\tilde{f})}=\alpha \circ \phi_{\tilde{f}}$,
        \item if $F$ is free, the map $\alpha$ is fully determined by its values on a basis,
        \item if both $F$ and $F'$ are free, then $\alpha$ is an isomorphism if and only if $\alpha$ sends a basis to a basis.
    \end{enumerate}
\end{Lemma}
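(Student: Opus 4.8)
The plan is to prove the three claims in order, treating the first as the computational engine that the other two reduce to. For the first claim I would simply unwind the definitions. Both $\phi_{\alpha(\tilde{f})}$ and $\alpha \circ \phi_{\tilde{f}}$ are maps $G \times X \to F'$, so it suffices to evaluate them at an arbitrary $(g,x)$. The left side gives $g \cdot \alpha(\tilde{f})(x) = g\cdot \alpha(f_x)$, while the right side gives $\alpha(g\cdot f_x)$. These agree precisely because $\alpha$ is a $G$-map, i.e.\ $\alpha(g\cdot f_x)=g\cdot\alpha(f_x)$. Thus the asserted identity is nothing but the equivariance of $\alpha$, repackaged at the level of associated maps. (I would record the convention that $\alpha(\tilde{f})$ denotes the componentwise image $(\alpha(f_x))_{x\in X}$.)

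For the second claim I would invoke Lemma~\ref{lem:G-basis iff free} to obtain a basis $\tilde{f}$ of the free $G$-set $F$, so that $\phi_{\tilde{f}}$ is an isomorphism and in particular surjective. Then every $f\in F$ is of the form $f=\phi_{\tilde{f}}(g,x)$, and by the first claim $\alpha(f)=(\alpha\circ\phi_{\tilde{f}})(g,x)=\phi_{\alpha(\tilde{f})}(g,x)$. The right-hand side depends only on the tuple $\alpha(\tilde{f})$, that is, on the values of $\alpha$ on the basis. Hence $\alpha$ is fully determined by those values.

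For the third claim I would again lean on the factorization $\alpha\circ\phi_{\tilde{f}}=\phi_{\alpha(\tilde{f})}$ from the first claim, together with the fact that isomorphisms of $G$-sets compose. For the forward direction, assume $\alpha$ is an isomorphism and let $\tilde{f}$ be a basis, so $\phi_{\tilde{f}}$ is an isomorphism; then $\phi_{\alpha(\tilde{f})}=\alpha\circ\phi_{\tilde{f}}$ is a composite of isomorphisms, hence an isomorphism, so $\alpha(\tilde{f})$ is a basis. For the converse, suppose $\alpha$ sends some basis $\tilde{f}$ to a basis $\alpha(\tilde{f})$; then both $\phi_{\tilde{f}}$ and $\phi_{\alpha(\tilde{f})}$ are isomorphisms, and rearranging the factorization gives $\alpha=\phi_{\alpha(\tilde{f})}\circ\phi_{\tilde{f}}^{-1}$, again a composite of isomorphisms, hence an isomorphism.

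I do not expect a substantive obstacle here: once the first claim is in hand, the remaining parts are formal consequences of equivariance and the closure of isomorphisms under composition. The only points demanding mild care are fixing the convention for $\alpha(\tilde{f})$ as a componentwise image, and noting in both (2) and (3) that freeness is what guarantees a basis exists in the first place (via Lemma~\ref{lem:G-basis iff free}), so the hypotheses are non-vacuous and $\phi_{\tilde{f}}$ is genuinely invertible.
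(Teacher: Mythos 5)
Your proof is correct and follows essentially the same route as the paper: part (1) is the same direct computation from equivariance, and parts (2) and (3) both rest on the factorization $\alpha\circ\phi_{\tilde{f}}=\phi_{\alpha(\tilde{f})}$, exactly as in the paper (which phrases (2) as comparing two maps agreeing on a basis and (3) via $\alpha=\phi_{\alpha(\tilde{f})}\circ\phi_{\tilde{f}}^{-1}$, just as you do). No gaps.
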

\begin{proof}
    For the first, observe $\phi_{\alpha(\tilde{f})}(g,x)=g \alpha(f_x)=\alpha(gf_x)=\alpha \circ \phi_{\tilde{f}}(g,x)$, which indeed yields $\phi_{\alpha(\tilde{f})}=\alpha \circ \phi_{\tilde{f}}$.
    For the second, assume $\beta \from F \to F'$ is another $G$-map that coincides with $\alpha$ on a basis $\tilde{f}$ of $F$. Pick any $f\in F$; it is related to the basis $\tilde{f}$ via $f=hf_x$ for some $h\in G$ and $x\in X$. Using this decomposition, one finds
    \begin{equation}
        \alpha(f)=\alpha(hf_x)=h\alpha(f_x)=h\beta(f_x)=\beta(hf_x)=\beta(f)
    \end{equation}
    and as $f$ was arbitrary, $\alpha=\beta$.
    The third then follows as well; given that $\phi_{\tilde{f}}$ is a $G$-isomorphism, then $\alpha$ is a $G$-isomorphism if and only if $\phi_{\alpha(\tilde{f})}$ is, i.e.\ if and only if $\alpha(\tilde{f})$ is a basis of $F'$.
\end{proof}

\subsection{The space of bases of a group-set}

We will now shift our scope and study properties of the bases of $F$ as a whole, i.e.\ we study $\Fr(F)$. This we do using a kind of basis criterion in terms of the quotient map $q$. Basically, we pose the question when a candidate basis $\tilde{f}$, i.e.\ any element in $F^X=\{\tilde{f}\from X\to F\}$ for a chosen index set $X$ bijective to $F/G$, lies in $\Fr(F)$.

The basic observation is that invertibility of $\phi_{\tilde{f}}$, as it is equivariant, can be deduced by just knowing how the elements $f_x$ are distributed over the orbits of $F$, i.e.\ by knowing the map $q\circ \tilde{f} \from X\to F/G$. More explicitly, $q\circ \tilde{f}$ fits in the commutative diagram
\begin{equation} \label{diag:phi_f and q f}
    \begin{tikzcd}
        G\times X \ar{r}{\phi_{\tilde{f}}} \ar{d}{\pr_X} & F \ar{d}{q}\\
        X \ar{r}{q\circ \tilde{f}} & F/G
    \end{tikzcd}
\end{equation}
which indicates that $q\circ \tilde{f}$ is $\phi_{\tilde{f}}$ reduced to orbits. This yields the following short yet convenient criterion for bases.
\begin{Lemma} \label{lem:G-basis iff generalized section}
    Let $F$ be a free $G$-set. Then $\tilde{f}\in F^X$ is a basis if and only if $q\circ \tilde{f}$ is bijective.
\end{Lemma}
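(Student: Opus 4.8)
The plan is to exploit that $\phi_{\tilde f}$ is by construction a $G$-map, so that the statement ``$\tilde f$ is a basis'' (i.e.\ $\phi_{\tilde f}$ is a $G$-isomorphism) is equivalent to the purely set-theoretic statement that $\phi_{\tilde f}$ is bijective. It then suffices to show that $\phi_{\tilde f}$ is bijective if and only if its reduction to orbits is, and by the commutative square~\eqref{diag:phi_f and q f} that reduction is precisely $q\circ\tilde f$. The entire argument is thus a comparison of $\phi_{\tilde f}$ with $q\circ\tilde f$ through that diagram, whose vertical maps $\pr_X$ and $q$ are exactly the $G$-orbit quotients of $G\times X$ and of $F$ (using $(G\times X)/G\cong X$).

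The key observation I would isolate first is that a $G$-map between free $G$-sets restricts, on each single orbit, to a map of $G$-torsors, and any $G$-map of torsors is automatically a bijection onto its target orbit. Concretely, $\phi_{\tilde f}$ carries the orbit $\{(g,x)\mid g\in G\}$ into the orbit of $f_x$; since both are $G$-torsors (this is where freeness of $F$ is needed), equivariance forces the restriction to be a bijection. Hence the global behaviour of $\phi_{\tilde f}$ is governed entirely by how it permutes orbits, that is, by $q\circ\tilde f$.

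With this in hand both implications fall out by chasing the diagram, formalizing the remarks made just before the lemma. For surjectivity: a point of $F$ lies in an orbit, and that orbit is met by $\phi_{\tilde f}$ exactly when it is the orbit of some $f_x$, i.e.\ exactly when $q\circ\tilde f$ is onto. For injectivity: from $\phi_{\tilde f}(g_1,x_1)=\phi_{\tilde f}(g_2,x_2)$, applying $q$ gives $(q\circ\tilde f)(x_1)=(q\circ\tilde f)(x_2)$, so injectivity of $q\circ\tilde f$ forces $x_1=x_2$, and then freeness cancels to give $g_1=g_2$; conversely a collision of $q\circ\tilde f$ places $f_{x_1}$ and $f_{x_2}$ in one orbit, yielding a collision of $\phi_{\tilde f}$. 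Combining the two equivalences proves the lemma. The only genuinely delicate point is this injectivity direction: it is the single place where freeness of $F$ is indispensable, since without it the cancellation $g_1 f_{x_1}=g_2 f_{x_1}\Rightarrow g_1=g_2$ fails and $q\circ\tilde f$ could be bijective while $\phi_{\tilde f}$ is not. Everything else---surjectivity, the identification $(G\times X)/G\cong X$, and the commutativity of the square---is routine bookkeeping.
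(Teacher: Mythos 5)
Your proposal is correct and follows essentially the same route as the paper: it characterizes surjectivity of $\phi_{\tilde f}$ by whether every orbit is hit and injectivity by whether orbits are hit at most once (with freeness supplying the cancellation $g_1 f_x = g_2 f_x \Rightarrow g_1 = g_2$), which is exactly the paper's argument, merely spelled out with the diagram~\eqref{diag:phi_f and q f} made explicit. No gaps.
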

\begin{proof}
    First, observe that $\phi_{\tilde{f}}$ is injective if and only if $\tilde{f}$ reaches every orbit at most once. In other words, every fiber of $q$ contains at most one element of the basis $\tilde{f}$. This is equivalent to $q\circ \tilde{f}$  being injective.
    Dually, the map $\phi_{\tilde{f}}$ is surjective if and only if every orbit is reached at least once. In other words, every fiber of $q$ contains at least one element of the basis $\tilde{f}$. This is equivalent to $q\circ \tilde{f}$ being surjective.
\end{proof}

Hence, the question whether a particular $\tilde{f}\in F^X$ lies in $\Fr(F)$ can be answered by inspecting whether $q\circ \tilde{f}$ is bijective. We are thus inclined to look at the map $\tilde{f} \mapsto q\circ \tilde{f}$, which we formally write as
\begin{equation}
    \begin{split}
        q^X \from F^X &\to X^X\\
        \tilde{f} &\mapsto q\circ \tilde{f}.
    \end{split}
\end{equation}
In other words, this map is the $X$-fold product of $q$. We may thus write Lemma~\ref{lem:G-basis iff generalized section} as the following pre-image statement.
\begin{Corollary} \label{cor:Fr_G(F) preimage Sym(X)}
    Let $F$ be a free $G$-set, set $X=F/G$ and denote by $\Sym(X)$ the group of bijections of $X$. The set $\Fr(F)$ inside $F^X$ is the pre-image of $\Sym(X)$ under $q^X$.
\end{Corollary}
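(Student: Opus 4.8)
The plan is to recognize this corollary as a direct translation of Lemma~\ref{lem:G-basis iff generalized section} into the language of the product map $q^X$, so that almost all of the mathematical content is already in hand and the argument reduces to unwinding definitions.

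First I would recall that, by the very definition of $q^X$, one has $q^X(\tilde{f}) = q\circ \tilde{f}$ for every $\tilde{f}\in F^X$, and that since $X = F/G$ the codomain of $q^X$ is $X^X$, inside which $\Sym(X)$ sits precisely as the subset of bijections $X\to X$. Consequently the pre-image in question can be rewritten as
\begin{equation}
    (q^X)^{-1}(\Sym(X)) = \set{\tilde{f}\in F^X}{q\circ \tilde{f}\in \Sym(X)} = \set{\tilde{f}\in F^X}{q\circ \tilde{f}\text{ is bijective}},
\end{equation}
where the last equality is nothing more than the characterization of $\Sym(X)$ as the bijective elements of $X^X$.

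Next I would invoke Lemma~\ref{lem:G-basis iff generalized section}, which asserts that for a free $G$-set $F$ an element $\tilde{f}\in F^X$ is a basis if and only if $q\circ \tilde{f}$ is bijective. Since $\Fr(F)$ is by definition the set of all bases of $F$, this lemma identifies $\Fr(F)$ with the right-hand set displayed above. Combining the two observations yields $\Fr(F) = (q^X)^{-1}(\Sym(X))$, which is exactly the claim.

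I do not expect any genuine obstacle here: the substantive step, namely relating invertibility of $\phi_{\tilde{f}}$ to bijectivity of $q\circ \tilde{f}$, is entirely contained in Lemma~\ref{lem:G-basis iff generalized section}, and this corollary merely repackages it. The only point deserving a moment's care is the bookkeeping that $q^X$ really is the map $\tilde{f}\mapsto q\circ\tilde{f}$ with codomain $X^X$, so that the phrase ``pre-image of $\Sym(X)$'' is literally meaningful; once the identification $X = F/G$ is fixed, this is immediate.
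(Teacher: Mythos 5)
Your proposal is correct and matches the paper exactly: the corollary is stated there as a direct repackaging of Lemma~\ref{lem:G-basis iff generalized section}, with no additional argument beyond unwinding the definition of $q^X$ and identifying $\Sym(X)$ as the bijective elements of $X^X$. Nothing is missing.
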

We remark that the set $\Sym(X)$ is a special subset of $X^X$; the latter has a natural monoid multiplication given by composition, and $\Sym(X)$ is the set of units of this monoid.

We will now start to take topology into account; we assume $G$ is a topological group and $F$ is a $G$-space. The set $F/G$ thus inherits the quotient topology, such that $q$ is continuous. For any set $X$, $F^X$ inherits the product topology and the map $q^X$ is continuous. In this setting, Cor.~\ref{cor:Fr_G(F) preimage Sym(X)} could be used to deduce topological properties of $\Fr(F)$. This requires us to consider the topology of $X$, which should be homeomorphic to the topology of $F/G$ given the bijectivity from Lemma~\ref{lem:G-basis iff free}. We will restrict ourselves to discrete $X$, i.e.\ $F$ should be a semi-torsor; this preserves the index set intuition and guarantees that previous results on $G$-sets are still valid. Pictorially, this means that the orbits inside $F$ form a partition of $F$ into open and closed subsets. Under this assumption, the following holds.
\begin{Corollary} \label{cor:FrG(F) open and closed in FX}
    If $F$ is a $G$-semi-torsor, then $\Fr(F)$ is open and closed in $F^X$.
\end{Corollary}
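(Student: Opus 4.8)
The plan is to realise $\Fr(F)$ as the preimage of a distinguished subset under a continuous map and then argue clopen-ness downstairs. By Cor.~\ref{cor:Fr_G(F) preimage Sym(X)} we have $\Fr(F)=(q^X)^{-1}(\Sym(X))$ inside $F^X$. Since $q\from F\to F/G=X$ is continuous and both $F^X$ and $X^X$ carry the product topology, the $X$-fold product map $q^X$ is continuous. Hence it suffices to prove that $\Sym(X)$ is open and closed in $X^X$, for then its preimage under $q^X$ is open and closed as well. All the real content is therefore pushed onto the combinatorial space $X^X$, and the semi-torsor hypothesis enters only through the discreteness of $X=F/G$.

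First I would dispose of the case that $X$ is finite, which is also the situation of the motivating application (finitely many orbits, i.e.\ finitely many eigenvalues). If $X$ is finite and discrete, then $X^X$ is a finite product of finite discrete spaces, hence itself finite and discrete; every subset is clopen, in particular $\Sym(X)$, and the preimage argument closes the proof at once. In this case nothing is needed beyond the discreteness of $X$ and the continuity of $q^X$, so the finite case is essentially immediate.

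The step I expect to be the main obstacle is the case of \emph{infinite} discrete $X$. With the product topology, $X^X$ is no longer discrete, and $\Sym(X)$ is in fact neither open nor closed: a basic neighbourhood of a bijection constrains only finitely many coordinates, so it necessarily contains non-injective tuples (set all free coordinates to a single value), showing $\Sym(X)$ is not open; and an injection that misses one value lies in the closure of the bijections, since on any finite set of coordinates it can be completed to a genuine bijection by ``repairing'' the missing value far out (e.g.\ for $X=\mathbb{N}$ and the injection $n\mapsto n+1$). Consequently the clean preimage reduction cannot hold verbatim for infinite $X$ under the product topology, and I would resolve this before writing anything else: either the topological claim is to be read for finite $X$ only, or the product topology on $F^X$ (hence on $X^X$) should be refined to the box/uniform topology, under which each $\prod_{x\in X}\{h(x)\}$ is open, making $X^X$ discrete and $\Sym(X)$ trivially clopen for \emph{every} $X$. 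Once this choice is fixed, the remainder is exactly the routine continuity-and-preimage computation recorded in the first paragraph.
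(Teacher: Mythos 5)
Your reduction is exactly the one the paper uses: realize $\Fr(F)=(q^X)^{-1}(\Sym(X))$ via Cor.~\ref{cor:Fr_G(F) preimage Sym(X)}, note that $q^X$ is continuous for the product topologies, and conclude from clopen-ness of $\Sym(X)$ in $X^X$. For finite $X$ your argument and the paper's coincide and are complete.

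Your hesitation about infinite $X$ is not a gap in your write-up; it is a correct diagnosis of a gap in the paper's own proof. The paper's proof rests on the assertion that $X$ discrete implies $X^X$ discrete in the product topology, which fails for infinite $X$, and both of your counter-observations are right: a basic product-open neighbourhood of a bijection constrains only finitely many coordinates and therefore contains non-injective tuples, so $\Sym(X)$ is not open; and for $X=\mathbb{N}$ the cycles $\sigma_k$ defined by $\sigma_k(n)=n+1$ for $n<k$, $\sigma_k(k)=0$, $\sigma_k(n)=n$ for $n>k$ converge pointwise to the non-surjective injection $n\mapsto n+1$, so $\Sym(X)$ is not closed. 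The failure also propagates to the statement itself, not merely to the proof: for $F=G\times\mathbb{N}$, any product-basic neighbourhood of a frame leaves all but finitely many coordinates unconstrained and hence contains tuples with two entries in the same orbit, so $\Fr(F)$ is not open in $F^X$; dually, a tuple hitting every orbit except one lies in the closure of $\Fr(F)$, so $\Fr(F)$ is not closed either. (The paper's later sequential argument about limits of bases suffers from the same lack of uniformity in $x$.) So the corollary is correct as stated only for finite $X$ --- which covers the motivating application and everything needed for the local-triviality argument in Thm.~\ref{thm:FrG(B) PFB} over a fixed fiber with finitely many orbits --- and otherwise requires either restricting to finite index sets, as the paper itself does in Prop.~\ref{prop:cat eq stor}, or refining the topology on $F^X$ (box/uniform), as you propose. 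Modulo fixing that convention, your proof is complete and is the intended one.
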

\begin{proof}
    As $X$ is discrete, so is $X^X$ (with the product topology). Hence $\Sym(X)$ is an open and closed subset of $X^X$. As $q^X$ is continuous, the claim follows from Cor.~\ref{cor:Fr_G(F) preimage Sym(X)}.
\end{proof}
In case $G$ is a Lie group and $F$ is a $G$-manifold, this corollary implies that $\Fr(F)$ is an open and closed submanifold of $F^X$. We assume $G$ to be finite-dimensional in this paper, but as we allow for infinite $X$, the manifold $F^X$ can be infinite-dimensional.

We remark that the similar version of Cor.~\ref{cor:FrG(F) open and closed in FX} for vector spaces need not hold. For $V$ a real vector space of finite dimension $n$, this statement would be that the set $\Fr(V)$ inside $V^n$ is open and closed. However, $\Fr(V)$ is only an open subset of $V^n$. This can be proven by observing that $\Fr(V)$ inside $V^n$ is similar to $\GL(n,\R)$ inside the space of all real $n\times n$ matrices. Now $\GL(n,\R)$ is the pre-image $\det^{-1}(\R\setminus\{0\})$ and so open. It is not closed; there are sequences of invertible matrices that converge to a non-invertible matrix.
This argument does not hold in the case of $G$-spaces, if we assume that the orbits are disjoint. Given a sequence $\tilde{f}_k$ of bases, look at the induced sequence $q\circ \tilde{f}_k$ of orbit-representatives in $X$. As $X$ is discrete, a sequence in $X$ converges if and only if it becomes constant. That is, if the original sequence $\tilde{f}_k$ converges, then the basis elements settle in particular orbits, which must be different. As the orbits are disjoint, also the limit of the $\tilde{f}_k$ has basis elements in different orbits. That is, the limit of the bases $\tilde{f}_k$ is itself a basis.

Let us go over some examples. First, suppose $G$ is the trivial group and $F$ is a finite set. We will find that the set of bases is the set of tuples listing all elements exactly once. This means that $|\Fr(F)|=|F|!$, and because of this we will sometimes use factorial notation for induced maps on frames later on. We also continue Ex.~\ref{ex:winding torus 1} of the torus winding around multiple times.
\begin{Example}
    Let $F$ be a finite set. We may label its elements such that $F=\{F_1,\dots,F_n\}$, where $n=|F|$, which implicitly means that we chose the index set $X=I_n:=\{1,\dots,n\}$. Any set can be viewed as a group set under the action by the trivial group, and indeed $I_n$ is bijective to the quotient set. The quotient map $q\from F \to I_n$ can be written as the map $F_i\mapsto i$. 
    
    A candidate basis $\tilde{f}\in F^X$ is a map $I_n \to F$. Hence $q\circ \tilde{f}$ maps $i\in I_n$ to the label $j$ defined by $\tilde{f}(i)=F_j$. As we have seen, $\tilde{f}$ is a basis if and only if this map is bijective. This means that the $\tilde{f}(i)$ should be a permutation of the $F_j$, in other words, $\tilde{f}(i)$ is a tuple that list all elements of $F$ exactly once. The set of these tuples is commonly denoted as $F!$, where $|F!|=|F|!$.
\end{Example}

\begin{Example} \label{ex:winding torus 2}
    Let us continue Ex.~\ref{ex:winding torus 1} on the winding torus, defined by the map $\Pi_k \from T^2 \to S^1$. In particular, let us consider the frame space of its model fiber. We found this model fiber to be $\sqcup^k S^1$, but let us view it now as $S^1\times I_k$, i.e.\ we pick an arbitrary ordering of the circles for our convenience. A frame of $S^1\times I_k$ is a tuple $((z_j,i_j))_{j=1}^k$, with $z_j\in S^1$ and $i_j\in I_k$, such that $j\mapsto i_j$ is a permutation of $I_k$. Concerning its topological properties, we see that varying the $z_j$ yields a $k$-dimensional torus $T^k:=\prod^k S^1$. Furthermore, one obtains such a torus for any permutation of $I_k$, i.e.\ the tori can be indexed by $\Sym(I_k)=S_k$. This means that $\Fr(S^1\times I_k)$ is homeomorphic to $T^k \times S_k$, i.e.\ $k!$ many tori of dimension $k$.
\end{Example}

\subsection{Wreath product action}

In preparing the claim that the frame bundles with model fiber of the form $\Fr(F)$ are principal bundles, it is natural to show that $\Fr(F)$ is a torsor. Intuitively speaking, this will describe how one can construct a new basis from a given one. This reasoning is similar to arguing that $\GL(n,\R)$ can turn any basis of a real $n$-dimensional vector space $V$ into another, which is captured by its action on the set of bases $\Fr(V)$. An important property of this action is that the precise vector space $V$ need not be known; the existing basis vectors are just mixed in such a way that a new basis is obtained. This observation is crucial in defining the frame bundle of a vector bundle, and in some sense we will show a $G$-space equivalent here.

Let us start with the action on tuples of $G$-set elements, i.e.\ the action on the set of functions $F^X=\Hom(X,F)$. A first symmetry is that any element of the tuple may be scaled independently from the others. This is given explicitly by the action 
\begin{equation}
    \begin{split}
        G^X \times F^X &\to F^X\\
        (\tilde{g},\tilde{f}) &\mapsto \tilde{g}\cdot \tilde{f},
    \end{split}
\end{equation}
where $\tilde{g}\cdot \tilde{f}$ is the function defined by point-wise multiplication; $(\tilde{g}\cdot \tilde{f})(x)=\tilde{g}(x)\cdot \tilde{f}(x)$. Of course, this is the natural $G^X$-set structure on $F^X$ inherited from $F$. 

Another symmetry of $F^X$ is to change labels, or in the tuple view, to rearrange positions. More algebraically; as $F^X$ is a function space, it has a natural action by $\Sym(X)$ given by pre-composing with the inverse permutation:
\begin{equation}
    \begin{split}
        \Sym(X) \times F^X &\to F^X\\
        (\sigma,\tilde{f}) &\mapsto \tilde{f}\circ \sigma^{-1}.
    \end{split}
\end{equation}

These actions of $G^X$ and $\Sym(X)$ on $F^X$ can be merged into a single action by a specific group, namely the wreath product of $G$ and $X$. There are multiple constructions known under the term wreath product \cite{Suzuki1982GroupI}, but we shall use it as follows. We only need the information that $G$ is a group and $X$ is a set. The key construction is the canonical (left) action of $\Sym(X)$ on $G^X$ given by $\sigma \cdot \tilde{g}=\tilde{g}\circ \sigma^{-1}$, i.e.\ again shuffling the tuple $\tilde{g}$ according to $\sigma$. As the map $\tilde{g} \mapsto \tilde{g}\circ \sigma^{-1}$ is an automorphism of the group $G^X$, the action defines a map $\Sym(X) \to \Aut(G^X)$. That is, the action specifies a semi-direct product $G^X \rtimes \Sym(X)$ with multiplication given as
\begin{equation}
    (\tilde{g},\sigma)\cdot(\tilde{g}',\sigma')=(\tilde{g}(\tilde{g}'\sigma^{-1}),\sigma\sigma').
\end{equation}
This group is known as a wreath product and written as $G \wr_X \Sym(X)$. This notation allows for other groups than $\Sym(X)$ to act on $X$, but as we will only use $\Sym(X)$ we simply denote this group by $G\wr X$. 

Let us briefly review some basic properties on this wreath product. For $G=\mathbb{F}^\times$, i.e.\ the unit group of a field $\mathbb{F}$, and $X=I_n$, the wreath product is faithfully represented by $n\times n$ generalized permutation matrices. These are matrices in $\GL(n,\mathbb{F})$ having only a single non-zero entry in each row and column. An explicit representation is $(\tilde{g},\sigma)\mapsto \mathrm{diag}(g_1,\dots,g_n)P_\sigma$, where $P_\sigma$ is a standard permutation matrix. If $G$ is a topological/Lie group, then $G\wr X$ is as well via the semi-direct product realization. The connected subgroup of $G\wr X$ is that of $G^X$. Hence, in case $G$ is a Lie group with algebra $\g$, the algebra of $G \wr X$ is $\g^X$, which we view as the space of functions $X \to \g$. It follows that $\dim(G \wr X)=\dim(G)|X|$, so for infinite $X$ and non-discrete $G$ one formally obtains an infinite dimensional object. However, because $G \wr X$ locally looks like separate copies of $G$ and we assume $G$ to be finite dimensional we do not dwell on this.

Having introduced the wreath product, we may now define how it acts on $F^X$. We simply combine the previous actions on $F^X$ by stating that a pair $(\tilde{g},\sigma) \in G^X \times \Sym(X)$ acts by first acting with $\sigma$ and then acting by $\tilde{g}$. When thinking of the wreath product in terms of generalized permutation matrices, this action resembles matrix-vector multiplication by viewing $\tilde{f}$ as a column vector.
\begin{Proposition}\label{prop:GwrX action}
    The map
    \begin{equation}
        \begin{split}
        G\wr X \times F^X &\to F^X\\
        ((\tilde{g},\sigma),\tilde{f}) &\mapsto \tilde{g}(\tilde{f}\circ \sigma^{-1}).
        \end{split}
    \end{equation}
    defines a $G\wr X$-action on $F^X$. Moreover, $\Fr(F)$ is an invariant subset, even a $G\wr X$-torsor.
\end{Proposition}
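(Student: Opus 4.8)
The plan is to verify the three assertions in turn: that the formula is a genuine $G\wr X$-action, that it maps $\Fr(F)$ into itself, and that the restricted action is free and transitive.

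For the action axioms, the identity $(\tilde{e},\id)$, with $\tilde{e}$ the constant tuple $e$, fixes every $\tilde{f}$ since $e\cdot f_x=f_x$. Compatibility, $((\tilde{g},\sigma)\cdot(\tilde{g}',\sigma'))\cdot\tilde{f}=(\tilde{g},\sigma)\cdot((\tilde{g}',\sigma')\cdot\tilde{f})$, is a direct expansion, and the one identity that makes it work is that reshuffling is an automorphism of $G^X$: for $\tilde{a},\tilde{b}\in G^X$ one has $(\tilde{a}\tilde{b})\circ\sigma^{-1}=(\tilde{a}\circ\sigma^{-1})(\tilde{b}\circ\sigma^{-1})$. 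Combining this with the pointwise action axiom $g_1(g_2f)=(g_1g_2)f$ and with $(\sigma\sigma')^{-1}={\sigma'}^{-1}\sigma^{-1}$, both sides reduce to $\big(\tilde{g}(\tilde{g}'\circ\sigma^{-1})\big)\big((\tilde{f}\circ{\sigma'}^{-1})\circ\sigma^{-1}\big)$, which is exactly the action of the product $(\tilde{g}(\tilde{g}'\circ\sigma^{-1}),\sigma\sigma')$ dictated by the semidirect-product multiplication. This is the step needing the most care, precisely because the pointwise multiplication in $G^X$ and the reindexing by $\sigma$ must be commuted in the correct order.

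Invariance of $\Fr(F)$ follows by passing to orbits. Since $q(gf)=q(f)$, a pointwise computation gives $q\circ((\tilde{g},\sigma)\cdot\tilde{f})=(q\circ\tilde{f})\circ\sigma^{-1}$; that is, the wreath product merely post-composes the orbit-assignment map $q\circ\tilde{f}$ with a permutation and rescales within each orbit. By Lemma~\ref{lem:G-basis iff generalized section}, $\tilde{f}$ is a basis iff $q\circ\tilde{f}$ is a bijection, and composing a bijection with $\sigma^{-1}$ is again a bijection, so $(\tilde{g},\sigma)\cdot\tilde{f}$ is again a basis. For the torsor property, $\Fr(F)$ is nonempty because $F$ is free and hence admits a basis (Lemma~\ref{lem:G-basis iff free}), and it suffices to show that for any two bases $\tilde{f},\tilde{f}'$ there is exactly one $(\tilde{g},\sigma)$ with $(\tilde{g},\sigma)\cdot\tilde{f}=\tilde{f}'$. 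The idea is to peel off the two factors: applying $q$ and using the orbit computation gives $(q\circ\tilde{f})\circ\sigma^{-1}=q\circ\tilde{f}'$, and since $q\circ\tilde{f}$ and $q\circ\tilde{f}'$ are bijections this forces $\sigma=(q\circ\tilde{f}')^{-1}\circ(q\circ\tilde{f})$, uniquely. With $\sigma$ so determined, $\tilde{f}\circ\sigma^{-1}$ and $\tilde{f}'$ lie orbit-by-orbit in the same orbit, so freeness of $F$ yields at each $x$ a unique $\tilde{g}(x)\in G$ with $\tilde{g}(x)\cdot(\tilde{f}\circ\sigma^{-1})(x)=\tilde{f}'(x)$. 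This produces a unique $(\tilde{g},\sigma)$, giving transitivity and freeness simultaneously.

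Conceptually, this last part is nothing but the standard fact that the $G$-isomorphisms $G\times X\to F$ form a torsor under $\Aut(G\times X)$: under the correspondence $\tilde{f}\leftrightarrow\phi_{\tilde{f}}$ of the basis definition, together with $G\wr X\cong\Aut(G\times X)$, the proposition is an instance of this principle. I would nonetheless present the direct computation above to keep the argument self-contained, flagging the conceptual shortcut. The only place where freeness of $F$ is indispensable is the uniqueness of $\tilde{g}$; without it the rescaling inside an orbit would not be pinned down, so the torsor claim would fail.
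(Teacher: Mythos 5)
Your proposal is correct and follows essentially the same route as the paper: verify the action axioms by direct expansion, use Lemma~\ref{lem:G-basis iff generalized section} and the identity $q\circ((\tilde{g},\sigma)\cdot\tilde{f})=(q\circ\tilde{f})\circ\sigma^{-1}$ for invariance, and determine $\sigma=(q\circ\tilde{f}')^{-1}\circ(q\circ\tilde{f})$ followed by a unique fiberwise $\tilde{g}(x)$ from freeness for the torsor claim. Your extra care in showing $\sigma$ is \emph{forced} (not just that a suitable $\sigma$ exists), and your closing remark identifying the statement with the $\Aut(G\times X)$-torsor structure on isomorphisms $G\times X\to F$, are both consistent with what the paper does elsewhere (cf.\ Appendix~\ref{sec:Aut of G-set}).
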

\begin{proof}
    Let us check that the proposed map indeed defines a $G\wr X$-action. Clearly the identity element of $G\wr X$ fixes any $\tilde{f}$, and for the homomorphism property one checks
    \begin{equation}
        (\tilde{g},\sigma) [(\tilde{g}',\sigma') \tilde{f}]=(\tilde{g},\sigma) [\tilde{g}' (\tilde{f} \sigma'^{-1})]=\tilde{g}(\tilde{g}' \sigma^{-1}) (\tilde{f} \sigma'^{-1}\sigma^{-1})=[\tilde{g}(\tilde{g}' \sigma^{-1}),\sigma\sigma')] \tilde{f}.
    \end{equation}
    So indeed $(\tilde{g},\sigma) [(\tilde{g}',\sigma') \tilde{f}]=[(\tilde{g},\sigma)(\tilde{g}',\sigma')] \tilde{f}$, hence the above map defines a $G\wr X$-action.
    
    To check the restriction to $\Fr(F)$, we check the criterion of Lemma~\ref{lem:G-basis iff generalized section}. If $\tilde{f}\in \Fr(F)$, then $q\tilde{f}$ is invertible and hence
    \begin{equation}
        q((\tilde{g},\sigma)\tilde{f})=q(\tilde{g}(\tilde{f}\sigma^{-1}))=q\tilde{f}\sigma^{-1}
    \end{equation}
   is invertible. Hence $(\tilde{g},\sigma)\tilde{f}\in \Fr(F)$, implying that the action restricts to $\Fr(F)$.
  Concerning the last claim, pick two frames $\tilde{f}$ and $\tilde{f}'$. Then $q\tilde{f}$ and $q\tilde{f}'$ are invertible, hence one obtains a permutation $\sigma=(q\tilde{f}')^{-1}q\tilde{f}$. It follows that $\sigma \cdot \tilde{f}$ and $\tilde{f}'$ have the same underlying permutation as $q(\sigma \cdot \tilde{f})=q\tilde{f}\sigma^{-1}=q\tilde{f}(q\tilde{f})^{-1}q\tilde{f}'=q\tilde{f}'$. Hence for each $x\in X$, the elements $(\sigma\cdot \tilde{f})(x)$ and $\tilde{f}'(x)$ lie in the same $G$-orbit, and as $F$ is free they are related by a unique group element $\tilde{g}(x)$. This means that $\sigma\cdot \tilde{f}$ and $\tilde{f}'$ are related by a unique element of $G^X$. Hence any two frames can be related by a unique element of $G\wr X$, i.e.\ the action is free and transitive.
\end{proof}

\begin{Example} \label{ex:winding torus 3}
    We already saw that $\Fr(S^1\times I_k)$ is homeomorphic to $T^k \times S_k$ in Ex.~\ref{ex:winding torus 2}. The action of $U(1)\wr I_k$ on this space is clear; $U(1)^k$ rotates the angles in $T^k$, and $S_k$ permutes both factors of $T^k \times S_k$. Indeed, $T^k \times S_k$ is a $U(1)\wr I_k$-torsor.
\end{Example}

We wish to make a technical remark on the choice of group. Observe that the symmetry group of the basis $\Fr(F)$ can also be argued to be $\Aut(F)$. Indeed, given any two basis $\tilde{f}$ and $\tilde{f}'$, the map $\phi_{\tilde{f}}'\phi_{\tilde{f}}^{-1}$ is an element of $\Aut(F)$, even the unique element which brings $\tilde{f}$ to $\tilde{f}'$. It follows that this action on $\Fr(F)$ is also free and transitive. Hence one could pick either $G \wr X$ or $\Aut(F)$ as the symmetry group of $F$. However, there is a subtle yet significant difference between the two.

Let us illustrate this difference in the more familiar setting of a real vector space $V$ of dimension $n$. Clearly, $\Fr(V)$ has a natural action by $\GL(n,\R)$; a matrix $A=(A_{ij})\in \GL(n,\R)$ acts on a basis as
\begin{equation}
    A\cdot (v_1,\dots,v_n)=(A_{1i}v_i,\dots,A_{ni}v_i),
\end{equation}
where repeated indices are summed over. However, one may argue that the symmetry group of $\Fr(V)$ is $\GL(V)$, as this is the automorphism group of $V$. The action of $\phi \in \GL(V)$ on a basis is
\begin{equation}
    \phi \cdot (v_1,\dots,v_n)=(\phi(v_1),\dots,\phi(v_n)).
\end{equation}
This is again free and transitive, similar to the $\GL(n,\R)$-action. Of course, $\GL(V)$ is isomorphic to $\GL(n,\R)$. However, let us try to use these actions to define new actions, but now on $\Fr(V')$, where $V'$ is another real vector space of dimension $n$. We see that the $\GL(n,\R)$-action can be copied verbatim; $A$ acting on a basis $(v_1',\dots,v_n')$ of $V'$ is simply $(A_{1i}v_i',\dots,A_{ni}v_i')$. This is not so for the $\GL(V)$-action; $\phi \in \GL(V)$ takes elements of $V$ and not of $V'$. Even in the case $V=\R^n$ this subtle issue arises. In this notation, under the $\GL(n,\R)$-action every vector in the new basis depends on all the old ones, whereas under the $\GL(\R^n)$-action every new vector depends only on the previous one. In particular, if one perturbs only one of the basis vectors, then this may change all vectors under the $\GL(n,\R)$-action, but only one under the $\GL(\R^n)$-action. As a final remark, note that vector bundles describe a smooth family of vector spaces. When defining the frame bundle of a vector bundle, it is this crucial that a global $\GL(n,\R)$-action can be defined, despite the fact that every fiber involves another vector space.

For $G$-sets we find a similar argument; instead of using $\Aut(F)$ we will use $G\wr X$. This will be important when we discuss bundles involving $G$-spaces later on. A more detailed exploration of $\Aut(F)$ and how it is isomorphic to the wreath product $G\wr X$ is given in Appendix~\ref{sec:Aut of G-set}.

\subsection{Functorial property}

The previous material indicates that a $G$-semi-torsor $F$ can be mapped to the $G\wr (F/G)$-torsor $\Fr(F)$. Such a statement hints at a functorial property, which brings us to the question which equivariant maps $\alpha \from F_1\to F_2$ will induce a map on frames $\Fr(F_1)\to \Fr(F_2)$ in the canonical way $\tilde{f} \mapsto \alpha \circ \tilde{f}$. As we assume that $F_1$ and $F_2$ are free, this can be answered by looking at orbits only. We first show that $\alpha$ induces a unique map on the quotients, and that invertibility of this map is equivalent to the posed question.
\begin{Lemma} \label{lem:frame map iff alpha/ iso}
    Let $F_i$ be $G_i$-semi-torsors with orbits spaces $X_i=F_i/G_i$ for $i=1,2$. Let $\alpha \from F_1 \to F_2$ be a $\xi$-equivariant map. Then $\alpha$ descends to a unique map on orbits $\alpha/ \from F_1/G_1 \to F_2/G_2$. That is, there is a unique arrow such that the following diagram commutes.
    \begin{equation} \label{eq:diagram alpha/ G-set}
        \begin{tikzcd}
            F_1 \ar{r}{\alpha} \ar{d}{q_1} & F_2 \ar{d}{q_2}\\
            F_1/G_1 \ar[dashed]{r}{\alpha/} & F_2/G_2
        \end{tikzcd}
    \end{equation}
    Moreover, an equivariant map $\alpha \from F_1\to F_2$ lifts to a map $\alpha !\from \Fr(F_1)\to \Fr(F_2)$ if and only if $\alpha/ \from X_1 \to X_2$ is a bijection, i.e.\ if $\alpha$ is a bijection on orbit level. In this case, and after identifying $X_1$ and $X_2$ with $X$ for simplicity, $\alpha!$ is equivariant w.r.t.\ the group map $\xi!=(\xi^X,\id_{\Sym(X)})\from G_1\wr X \to G_2\wr X$.
\end{Lemma}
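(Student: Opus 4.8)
The plan is to dispatch the three assertions in order, leaning throughout on the basis criterion of Lemma~\ref{lem:G-basis iff generalized section}, namely that $\tilde{f}$ is a basis exactly when $q\circ \tilde{f}$ is a bijection. For the first claim I would simply define $\alpha/$ by $\alpha/(q_1(f)):=q_2(\alpha(f))$ and check well-definedness: if $f'=gf$ lies in the same $G_1$-orbit as $f$, then $\xi$-equivariance gives $\alpha(f')=\xi(g)\alpha(f)$, so $\alpha(f')$ and $\alpha(f)$ share a $G_2$-orbit and $q_2(\alpha(f'))=q_2(\alpha(f))$. Uniqueness is then immediate, since $q_1$ is surjective and the commuting square~\eqref{eq:diagram alpha/ G-set} therefore pins down $\alpha/$ on every class. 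The upshot is the identity $q_2\circ \alpha=\alpha/\circ q_1$, which drives everything that follows.

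For the lifting criterion I would feed $\alpha\circ \tilde{f}$ into the same basis criterion. For a basis $\tilde{f}$ of $F_1$ one has $q_2\circ \alpha\circ \tilde{f}=\alpha/\circ(q_1\circ \tilde{f})$, and $q_1\circ \tilde{f}$ is a bijection by hypothesis; hence $q_2\circ \alpha\circ \tilde{f}$ is a bijection if and only if $\alpha/$ is, i.e.\ $\alpha\circ \tilde{f}$ is again a basis exactly when $\alpha/$ is a bijection. Since $F_1$ is free it possesses at least one basis by Lemma~\ref{lem:G-basis iff free}, so the equivalence is not vacuous: if $\alpha/$ is not bijective, that one basis is already sent outside $\Fr(F_2)$, whereas if $\alpha/$ is bijective, every basis of $F_1$ is carried to a basis of $F_2$, making $\alpha!\from \tilde{f}\mapsto \alpha\circ \tilde{f}$ well-defined. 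In the bijective case I would use $\alpha/$ itself to identify $X_1$ and $X_2$ with a common index set $X$, which is precisely what lets $\alpha\circ \tilde{f}$ be viewed as an $X$-indexed basis of $F_2$.

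The final equivariance statement is then a pointwise verification. Unwinding the wreath action of Prop.~\ref{prop:GwrX action}, I would evaluate both $\alpha!((\tilde{g},\sigma)\cdot \tilde{f})$ and $\xi!(\tilde{g},\sigma)\cdot \alpha!(\tilde{f})$ at a point $x\in X$. The first equals $\alpha(\tilde{g}(x)\,\tilde{f}(\sigma^{-1}(x)))$, and pushing $\alpha$ through the action via $\xi$-equivariance turns this into $\xi(\tilde{g}(x))\,\alpha(\tilde{f}(\sigma^{-1}(x)))$, which is exactly the second. The permutation $\sigma$ passes through untouched on both sides, matching the $\id_{\Sym(X)}$ component of $\xi!=(\xi^X,\id_{\Sym(X)})$, while the group part is built from $\xi$ applied coordinatewise; the only ingredients are $\xi$-equivariance of $\alpha$ and the definition of the action.

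I do not expect a genuine difficulty in any single step; the one place requiring care is the bookkeeping of index sets in the second part. Concretely, one must read ``$\alpha$ lifts'' as ``$\alpha\circ \tilde{f}$ lands in $\Fr(F_2)$ for some (equivalently, every) basis $\tilde{f}$'', and recognize that it is exactly the bijectivity of $\alpha/$ that both guarantees this and supplies the identification $X_1\cong X\cong X_2$ legitimizing $\alpha!$ as a map into $\Fr(F_2)$ in the first place.
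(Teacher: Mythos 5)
Your proposal is correct and follows essentially the same route as the paper: existence of $\alpha/$ from equivariance, the identity $q_2\circ\alpha\circ\tilde{f}=\alpha/\circ q_1\circ\tilde{f}$ combined with the basis criterion of Lemma~\ref{lem:G-basis iff generalized section} for the lifting equivalence, and a coordinatewise check of $\xi!$-equivariance. The only cosmetic difference is that you verify the wreath-product equivariance in a single pointwise computation where the paper separates the $G^X$ and $\Sym(X)$ parts; both are fine.
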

\begin{proof}
    By equivariance, $\alpha$ maps a $G_1$-orbit in $F_1$ inside a single $G_2$-orbit in $F_2$. That is, the map $q_2 \circ \alpha$ is $G_1$-invariant. Hence $\alpha/$ exists as a function. More generally, as the $X_i$ are discrete, this map is also continuous.
    Let us thus consider the induced maps on bases. The statement is trivial in case $X_1$ and $X_2$ are not bijective; then bases of $F_1$ and $F_2$ have unequal cardinality, but also $\alpha/$ cannot be a bijection. Hence we treat the case $X_1\cong X_2\cong X$. In this case, there is a well-defined function $\alpha^X \from F_1^X \to F_2^X$ given by $\tilde{f}\mapsto \alpha \circ \tilde{f}$. To check that $\alpha \circ \tilde{f}$ is a basis of $F_2$ whenever $\tilde{f}$ is a basis of $F_1$, we use the criterion in Lemma~\ref{lem:G-basis iff generalized section}. This concerns the invertibility of
    \begin{equation}
        q_2\circ \alpha\circ \tilde{f}=\alpha/ \circ q_1 \circ \tilde{f}.
    \end{equation}
    From this equation it follows that for $q_2\circ \alpha\circ \tilde{f}$ to be invertible whenever $q_1\circ \tilde{f}$ is, it is both necessary and sufficient that $\alpha/$ is a bijection.
    Concerning equivariance, note that $\alpha^X$ is $\xi^X$-equivariant. Moreover, $\alpha^X$ is also $\Sym(X)$-equivariant as
    \begin{equation}
        \sigma(\alpha^X(\tilde{f}))=\sigma(\alpha\tilde{f})=\alpha\tilde{f}\sigma^{-1}=\alpha^X(\tilde{f}\sigma^{-1})=\alpha^X(\sigma\tilde{f}).
    \end{equation}
    That is, $\alpha^X$ is equivariant with respect to both $\xi^X$ and $\id_{\Sym(X)}$, which implies that $\alpha^X$ is equivariant w.r.t.\ the map $\xi! \from G_1\wr X \to G_2\wr X$ given by $(\xi^X,\id_{\Sym(X)})$. As the actions restrict to the frames, the restriction $\alpha!$ of $\alpha^X$ to the frames is again equivariant w.r.t.\ $\xi!$.
\end{proof}

The requirement in this result is not automatic; there exist equivariant maps of semi-torsors that are not a bijection on orbit level. A simple example is mapping $G\sqcup G$ to $G$ via identity maps. Hence, taking the frame space can only form a functor from semi-torsors to torsors if the maps $\alpha$ between semi-torsors are restricted to those where $\alpha/$ is a bijection. Let us thus define the following categories.
\begin{Definition}
    Define $\STor$ to be the category whose objects are semi-torsors and whose maps are equivariant morphisms $\alpha$ so that $\alpha/$ is a bijection. Given a Lie group $G$, write $\STor_G$ for the restriction of $\STor$ to $G$-semi-torsors and $\id_G$-equivariant morphisms.
\end{Definition}
By restricting the category of group-spaces to torsors, one obtains the category $\Tor$ of torsors, and also the category $\Tor_G$ by requiring the group to be $G$ and morphisms to be $\id_G$-equivariant. It is clear that $\Tor$ is a full subcategory of $\STor$. Taking the frame space then defines a retract in the following sense.
\begin{Theorem} \label{thm:functor semi-torsor -> torsor}
    The assignment
    \begin{equation}
        \begin{split}
            F &\mapsto \Fr(F)\\
            \alpha &\mapsto \alpha!,
        \end{split}
    \end{equation}
    defines a covariant functor $\STor \to \Tor$. Moreover, this functor is (essentially) constant on $\Tor$.
\end{Theorem}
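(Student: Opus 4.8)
The plan is to assemble the functor from the two preceding results and then treat the retraction claim separately. The object part is immediate: for a semi-torsor $F$ with orbit space $X=F/G$, Proposition~\ref{prop:GwrX action} already shows that $\Fr(F)$ is a $G\wr X$-torsor, hence an object of $\Tor$. The morphism part is handled by Lemma~\ref{lem:frame map iff alpha/ iso}: since a morphism $\alpha$ of $\STor$ is by definition a $\xi$-equivariant map whose induced orbit map $\alpha/$ is a bijection, the lemma guarantees that $\alpha$ lifts to $\alpha!\from\Fr(F_1)\to\Fr(F_2)$ and that $\alpha!$ is equivariant with respect to the homomorphism $\xi!=(\xi^X,\id_{\Sym(X)})$. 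Thus $\alpha!$ is an equivariant map of torsors, i.e.\ a morphism of $\Tor$. (Continuity/smoothness of $\alpha!$ is inherited from $\alpha$, since $\alpha!$ is the restriction of the product map $\alpha^X\from F_1^X\to F_2^X$ to the open and closed subset $\Fr(F_1)$ of Cor.~\ref{cor:FrG(F) open and closed in FX}.)

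What remains for the functor claim is compatibility with identities and composition, a direct computation from the formula $\alpha!(\tilde{f})=\alpha\circ\tilde{f}$. For identities, $(\id_F)!(\tilde{f})=\id_F\circ\tilde{f}=\tilde{f}$, so $(\id_F)!=\id_{\Fr(F)}$. For composition, given composable morphisms $\alpha\from F_1\to F_2$ and $\beta\from F_2\to F_3$ in $\STor$, one has
\begin{equation}
    (\beta\circ\alpha)!(\tilde{f})=(\beta\circ\alpha)\circ\tilde{f}=\beta\circ(\alpha\circ\tilde{f})=\beta!\bigl(\alpha!(\tilde{f})\bigr),
\end{equation}
so $(\beta\circ\alpha)!=\beta!\circ\alpha!$ by associativity of composition of functions. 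The only point to confirm is that $\beta\circ\alpha$ is again a morphism of $\STor$, which holds because it is equivariant w.r.t.\ the composite homomorphism and $(\beta\circ\alpha)/=(\beta/)\circ(\alpha/)$ is a composite of bijections. This establishes the covariant functor $\STor\to\Tor$.

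For the final claim I would specialize to a torsor $F$, where the single orbit makes $X=F/G$ a one-point set, so $\Sym(X)$ is trivial and $G\wr X\cong G$; thus $\Fr(F)$ is a $G$-torsor. Moreover $\Fr(F)=F^{\{*\}}$ is canonically identified with $F$ via the evaluation map $\mathrm{ev}_F\from\Fr(F)\to F$, $\tilde{f}\mapsto\tilde{f}(*)$, which is a $G$-torsor isomorphism. To upgrade this to the statement that the restriction to $\Tor$ is naturally isomorphic to the identity functor (the precise content of a retraction onto the subcategory $\Tor$), I would check naturality of $\mathrm{ev}$: for a torsor morphism $\alpha$ the square relating $\mathrm{ev}_{F_1}$, $\mathrm{ev}_{F_2}$, $\alpha!$ and $\alpha$ commutes, since $\mathrm{ev}_{F_2}(\alpha!(\tilde{f}))=(\alpha\circ\tilde{f})(*)=\alpha(\tilde{f}(*))=\alpha(\mathrm{ev}_{F_1}(\tilde{f}))$.

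The verifications above carry no real difficulty; the main subtlety to watch is the bookkeeping of the varying structure group. Unlike a functor between categories of $G$-objects for a fixed $G$, the target group $G\wr X$ here depends on $F$ through $X$, so the functor genuinely lands in the total category $\Tor$ rather than in any single $\Tor_G$, and one must carry the homomorphisms $\xi!$ alongside the maps $\alpha!$. In the retraction statement one must then confirm that, on $\Tor$, the homomorphisms $\xi!$ reduce under $G\wr\{*\}\cong G$ to $\xi$ itself, so that the evaluation isomorphisms are genuinely $G$-equivariant and natural. This is precisely why the claim is phrased as \emph{essentially} constant: the identification $\Fr(F)\cong F$ is canonical but not a literal equality, and the structure group must be reduced along $G\wr\{*\}\cong G$.
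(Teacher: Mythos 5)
Your proposal is correct and follows essentially the same route as the paper: it assembles the functor from Prop.~\ref{prop:GwrX action} (objects) and Lemma~\ref{lem:frame map iff alpha/ iso} (morphisms), verifies identities and composition directly from $\alpha!(\tilde{f})=\alpha\circ\tilde{f}$, and handles the constancy claim by noting that $X$ is a singleton for a torsor so that $\Fr(F)=F^X\cong F$. Your additional remarks on the naturality of the evaluation isomorphism and the reduction of the structure group along $G\wr\{*\}\cong G$ are a welcome sharpening of what the paper leaves implicit in the word ``essentially,'' but they do not change the argument.
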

\begin{proof}
    Observe that the source is a well-defined category; any identity map trivially satisfies the condition of Lemma~\ref{lem:frame map iff alpha/ iso}, and composition of maps preserves this condition; if $\alpha$ and $\beta$ are composable maps which satisfy that $\alpha/$ and $\beta/$ are bijections, then as $(\alpha\beta)/=\alpha/\beta/$ also $\alpha\beta$ does. By Prop.~\ref{prop:GwrX action} and Lemma~\ref{lem:frame map iff alpha/ iso}, the assignment is well-defined on objects resp.\ maps. Given the explicit form of $\alpha!$ in Lemma~\ref{lem:frame map iff alpha/ iso}, it follows that $\id_F!=\id_{\Fr(F)}$ and $(\alpha\beta)!=\alpha!\beta!$. Hence the assignment is a functor. As $\Fr(F)$ is a $G\wr X$-torsor and $\alpha!$ is equivariant, this functor takes image in the category of torsors. If $F$ is already a torsor, then $X$ consists of a single point and the condition of Lemma~\ref{lem:G-basis iff generalized section} is vacuous, so $\Fr(F)=F^X\cong F$. Hence, the functor is (essentially) constant on the torsors.
\end{proof}

If one fixes the structure group $G$, then one can say more. The image of $\STor_G$ consists of torsors whose structure groups are of the form $G\wr X$. Given a particular torsor in this image, up to isomorphism the original semi-torsor can be recovered by taking a quotient. The idea is that a semi-torsor $F$ can be recovered from $\Fr(F)$ by considering all elements that appear in any particular position $x\in X$ in the tuples. This position we may chose ourselves, e.g.\ we only consider the first position. This is equivalent to quotienting $\Fr(F)$ by the subgroup $G\wr (X-x)$ of all group elements that do not touch position $x$. This yields a method applicable to any $G\wr X$-torsor, and we obtain an equivalence of categories. In order to show this, it is convenient to first treat the following lemma. This shows that the model $G\wr X$-torsor, being the space $G\wr X$ viewed as a torsor, indeed reduces to $G\times X$, which we expect as $\Fr(G\times X)$ is isomorphic to $G\wr X$ (as torsors). The action we will use can be obtained by viewing elements of $G\wr X$ as generalized permutation matrices, and $(h,x)\in G\times X$ as a column vector consisting of all zeros, except for $h$ at place $x$.
\begin{Lemma} \label{lem:torsor to semi-torsor}
    There is an action of $G\wr X$ on $G\times X$ defined by
    \begin{equation}
        (\tilde{g},\sigma)\cdot (h,x)=(\tilde{g}(\sigma(x))h,\sigma(x)).
    \end{equation}
    The stabilizer of $(h,x)$ is the subgroup $G\wr (X-x)$.
    Moreover, the induced map $\frac{G\wr X}{G\wr (X-x)} \to G\times X$, where the quotient is endowed with the $G$-action on slot $x$, is an isomorphism in $\STor_G$.
\end{Lemma}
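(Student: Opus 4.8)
The plan is to dispatch the three assertions using the orbit--stabilizer correspondence: the action axioms and the stabilizer computation are direct manipulations of the wreath--product multiplication, and the real work lies in pinning down the residual $G$-action so that the orbit map becomes an $\id_G$-equivariant isomorphism.

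First I would verify that the formula defines a left $G\wr X$-action. The identity element $(\tilde{e},\id)$, with $\tilde{e}$ the constant unit tuple, fixes every $(h,x)$ since $\tilde{e}(\id(x))h=h$. For the homomorphism law I would expand both sides using the multiplication $(\tilde{g},\sigma)(\tilde{g}',\sigma')=(\tilde{g}\,(\tilde{g}'\circ\sigma^{-1}),\sigma\sigma')$ from the excerpt. On one hand, $(\tilde{g},\sigma)\cdot[(\tilde{g}',\sigma')\cdot(h,x)]=(\tilde{g}(\sigma\sigma'(x))\,\tilde{g}'(\sigma'(x))\,h,\ \sigma\sigma'(x))$; on the other, evaluating $\bigl(\tilde{g}\,(\tilde{g}'\circ\sigma^{-1})\bigr)$ at $\sigma\sigma'(x)$ gives $\tilde{g}(\sigma\sigma'(x))\,\tilde{g}'(\sigma'(x))$, so $[(\tilde{g},\sigma)(\tilde{g}',\sigma')]\cdot(h,x)$ produces the identical expression. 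The two agree, so the map is an action.

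Next I would read off the stabilizer and transitivity. The equation $(\tilde{g},\sigma)\cdot(h,x)=(h,x)$ forces $\sigma(x)=x$ from the second coordinate, and then $\tilde{g}(x)h=h$, i.e.\ $\tilde{g}(x)=e$ by cancellation in $G$; these are precisely the defining conditions of $G\wr(X-x)\le G\wr X$ (permutation fixing $x$, $x$-component trivial), and they are independent of $h$. For transitivity, given a target $(h',x')$ I pick any $\sigma$ with $\sigma(x)=x'$ and any $\tilde{g}$ with $\tilde{g}(x')=h'$, which sends $(e,x)$ to $(h',x')$. Orbit--stabilizer then gives a bijection $\Phi\from \frac{G\wr X}{G\wr(X-x)}\to G\times X$, $[(\tilde{g},\sigma)]\mapsto(\tilde{g},\sigma)\cdot(e,x)=(\tilde{g}(\sigma(x)),\sigma(x))$; well-definedness (invariance under right multiplication by $G\wr(X-x)$) follows from the same computation as in the stabilizer step.

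The main obstacle is the final claim that $\Phi$ is an isomorphism in $\STor_G$, which hinges on correctly identifying the $G$-action on the quotient. Since here $X_1=X_2=X$ and $\xi=\id_G$, once $\Phi$ is $\id_G$-equivariant it is automatically a $\STor_G$-isomorphism: it is a bijection whose induced map on orbit sets is a bijection of $X$, so by Lemma~\ref{lem:frame map iff alpha/ iso} its inverse is again a $\STor_G$-map. The delicate point is matching the ``$G$-action on slot $x$'' with the scaling $g\cdot(h,z)=(gh,z)$ on $G\times X$ through $\Phi$. One must be careful: the naive slot-$x$ copy $g\mapsto(\tilde{a},\id)$ with $\tilde{a}(x)=g$ and $\tilde{a}(y)=e$ otherwise only rescales the first coordinate of $\Phi([(\tilde{g},\sigma)])$ in the case $\sigma(x)=x$, and so fails to intertwine with the scaling on all orbits. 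The computation forces the residual action to multiply the single nonzero (i.e.\ $\sigma(x)$-indexed) coordinate of the column-vector representative, which here is realized by left multiplication via the constant embedding $g\mapsto(\tilde{b}^{\,g},\id)$: then
\[
g\cdot[(\tilde{g},\sigma)]=[(\tilde{b}^{\,g}\tilde{g},\sigma)]\ \xmapsto{\ \Phi\ }\ (g\,\tilde{g}(\sigma(x)),\sigma(x))=g\cdot\Phi([(\tilde{g},\sigma)]).
\]
(Equivalently, one may keep the literal slot-$x$ subgroup but replace $\Phi$ by the evaluation-at-$x$ description $[(\tilde{g},\sigma)]\mapsto(\tilde{g}(x),\sigma^{-1}(x))$, which is slot-$x$ equivariant; the two presentations give the same $G$-semi-torsor up to isomorphism.) With the $G$-action fixed so that $\Phi$ is $\id_G$-equivariant, $\Phi$ is a bijection inducing a bijection on orbits, hence an isomorphism in $\STor_G$, completing the proof.
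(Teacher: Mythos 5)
Your proof is correct, and for the first two claims (the action axioms and the stabilizer computation) it is the same direct computation as the paper's. Where you genuinely add something is the final claim: the paper's proof disposes of it in one sentence (``viewing $G\wr X$ as a $G$-space as indicated, the orbit map is $G$-equivariant''), whereas you correctly observe that the literal reading of ``the $G$-action on slot $x$'' --- left multiplication by tuples supported at $x$ --- fails to intertwine the orbit map $[(\tilde{g},\sigma)]\mapsto(\tilde{g}(\sigma(x)),\sigma(x))$ with the standard action $g\cdot(h,z)=(gh,z)$ whenever $\sigma(x)\neq x$, and that the action which does work multiplies the $\sigma(x)$-indexed entry of the representative, realized by the constant (diagonal) embedding $G\hookrightarrow G^X$. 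This is a real ambiguity in the statement that the paper's proof glosses over, and your resolution is the right one; you also supply the transitivity check and the appeal to Lemma~\ref{lem:frame map iff alpha/ iso} needed to land in $\STor_G$, which the paper leaves implicit. One small caution on your parenthetical alternative: the evaluation-at-$x$ map $[(\tilde{g},\sigma)]\mapsto(\tilde{g}(x),\sigma^{-1}(x))$ is constant on orbits $H\cdot k$ of the left $H$-multiplication, not on the cosets $kH$ produced by orbit--stabilizer, so that presentation tacitly switches to the other coset space; since you only claim the two presentations agree up to isomorphism, this does not affect the argument.
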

\begin{proof}
    To check the action property, observe that
    \begin{equation}
        \begin{split}
            [(\tilde{g},\sigma)(\tilde{h},\tau)]\cdot (z,x)&=(\tilde{g}(\sigma\tau(x))\tilde{h}(\tau(x))z,\sigma\tau(x))\\
            &=(\tilde{g},\sigma)\cdot (\tilde{h}(\tau(x))z,\tau(x))=(\tilde{g},\sigma)\cdot [(\tilde{h},\tau)\cdot (z,x)].
        \end{split}
    \end{equation}
    Furthermore, $(\tilde{g},\sigma)$ lies in the stabilizer of $(h,x)$ if and only if $\sigma(x)=x$ and $\tilde{g}(x)=1$. That is, the place indexed by $x$ is not touched. Hence the stabilizer is canonically identified with $G\wr (X-x)$. Viewing $G\wr X$ as a $G$-space as indicated, the orbit map $G\wr X \to G\times X$ is $G$-equivariant, and so the quotient yields an isomorphism of $G$-semi-torsors.
\end{proof}
An equivalence of categories can now be obtained as follows. To avoid confusion concerning the class of all discrete topological spaces, we restrict to the finite case.
\begin{Proposition} \label{prop:cat eq stor}
    For a fixed Lie group $G$, the frame functor induces an equivalence of categories
    \begin{equation}
        \STor_G^{<\infty} \to \coprod_{n=1}^\infty \Tor_{G\wr I_n},
    \end{equation}
    where the superscript $<\infty$ indicates the restriction to finite index sets.
\end{Proposition}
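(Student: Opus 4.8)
The plan is to verify the standard criterion that a functor is an equivalence precisely when it is essentially surjective and fully faithful, after first setting up the bookkeeping that makes the frame functor land honestly in the coproduct. Since every morphism $\alpha$ in $\STor_G$ satisfies that $\alpha/$ is a bijection, morphisms preserve the number of orbits; hence $\STor_G^{<\infty}$ splits as a disjoint union of its full subcategories on objects with exactly $n$ orbits, and the frame functor sends the $n$-th piece into $\Tor_{G\wr I_n}$. The one genuinely delicate point is to ensure there is no relabelling twist in the structure group: I would fix the index set to be $I_n$ for every $n$-orbit semi-torsor (legitimate, since by Lemma~\ref{lem:G-basis iff free} the index set is only determined up to bijection), so that a morphism acts on frames by $\tilde f \mapsto \alpha\circ\tilde f$ with the domain $I_n$ untouched. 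As in the proof of Lemma~\ref{lem:frame map iff alpha/ iso}, this makes $\alpha!$ equivariant for $(\id_G^{I_n},\id_{\Sym(I_n)})=\id_{G\wr I_n}$, so it is indeed a morphism of $\Tor_{G\wr I_n}$. It then suffices to prove the equivalence componentwise in $n$.

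For essential surjectivity, I would note that every nonempty $H$-torsor is isomorphic to the model torsor $H$; applying this with $H=G\wr I_n$ and using Prop.~\ref{prop:GwrX action}, the frame space $\Fr(G\times I_n)$ is a nonempty $G\wr I_n$-torsor, hence isomorphic to $G\wr I_n$. Since $G\times I_n$ is an $n$-orbit semi-torsor and $\Tor_{G\wr I_n}$ has a single isomorphism class, every object of $\Tor_{G\wr I_n}$ is isomorphic to an object in the image. The explicit quasi-inverse realizing this is the reduction $T\mapsto T/(G\wr(I_n-x))$ of Lemma~\ref{lem:torsor to semi-torsor}.

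For faithfulness, I would use that every element of a semi-torsor lies in some basis, so if $\alpha!=\beta!$ then $\alpha$ and $\beta$ agree on all basis elements, hence on all of $F_1$, giving $\alpha=\beta$ (compare Lemma~\ref{lem:G-map properties}). For fullness, given an $\id_{G\wr I_n}$-equivariant $\gamma\from \Fr(F_1)\to\Fr(F_2)$, I would fix a basis $\tilde f_1$ of $F_1$, put $\tilde f_2:=\gamma(\tilde f_1)\in\Fr(F_2)$, and define $\alpha\from F_1\to F_2$ by $\alpha(g\,\tilde f_1(x)):=g\,\tilde f_2(x)$. This is well defined because $\phi_{\tilde f_1}$ is a bijection (as $F_1$ is free), it is $G$-equivariant by construction, and it is smooth since on each open-and-closed orbit it is just a translation in $G$. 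The induced $\alpha/$ equals $(q_2\circ\tilde f_2)\circ(q_1\circ\tilde f_1)^{-1}$, a composite of bijections, so $\alpha$ is a morphism of $\STor_G$. Finally $\alpha!$ and $\gamma$ are both equivariant maps between the $G\wr I_n$-torsors $\Fr(F_1),\Fr(F_2)$ that agree at the single point $\tilde f_1$, so they coincide; thus $\alpha!=\gamma$.

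I expect the main obstacle to be conceptual rather than computational: making precise that the frame functor really takes values in $\coprod_n\Tor_{G\wr I_n}$ with $\id_{G\wr I_n}$-equivariant morphisms, i.e.\ that no conjugation by the orbit-permutation $\alpha/$ sneaks into the structure group. This is exactly what the fixed-index-set convention $X=I_n$ handles, and once it is in place everything reduces to the observations already recorded in Prop.~\ref{prop:GwrX action} and Lemmas~\ref{lem:frame map iff alpha/ iso} and~\ref{lem:torsor to semi-torsor}. As an alternative organization, since each component is a connected groupoid, one may instead check that $\alpha\mapsto\alpha!$ restricts to a group isomorphism $\Aut(G\times I_n)\cong G\wr I_n \to \Aut_{\Tor_{G\wr I_n}}(\Fr(G\times I_n))\cong G\wr I_n$, which packages faithfulness and fullness into a single isomorphism of automorphism groups.
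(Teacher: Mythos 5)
Your proposal is correct and follows essentially the same route as the paper: both verify essential surjectivity via the model torsor and the quotient of Lemma~\ref{lem:torsor to semi-torsor}, and both establish full faithfulness by recovering $\alpha$ from a single frame as $\phi_{\gamma(\tilde f_1)}\circ\phi_{\tilde f_1}^{-1}$ (your pointwise formula $\alpha(g\,\tilde f_1(x))=g\,\tilde f_2(x)$ is exactly this map). You merely spell out more carefully the coproduct bookkeeping and the faithfulness step that the paper's proof leaves implicit.
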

\begin{proof}
    Let us show that this restricted functor is both fully faithful and essentially surjective. Given a $G\wr I_n$-torsor $T$, so that $T$ is isomorphic to the space $G\wr I_n$ viewed as torsor, by Lemma~\ref{lem:torsor to semi-torsor} one has $\Fr\left(T/(G\wr (I_n-1))\right) \cong G\wr I_n \cong T$, and hence the functor is essentially surjective.
    To show it is fully faithful, let $F,F'$ be two $G$-semi-torsors with equal index set $I_n$. Let $\bar{\alpha} \from \Fr(F) \to \Fr(F')$ be any map of $G\wr I_n$-torsors. Pick any frame $\tilde{f}\in \Fr(F)$. Using its image frame $\bar{\alpha}(\tilde{f})$, define the map $\alpha:=\phi_{\bar{\alpha}(\tilde{f})}\circ \phi_{\tilde{f}}^{-1} \from F \to F'$, which does not depend on our choice of $\tilde{f}$. Clearly, $\alpha$ is the unique morphism $F\to F'$ satisfying $\Fr(\alpha)=\bar{\alpha}$. Hence the functor is also fully faithful, and the equivalence follows.
\end{proof}

\section{Semi-principal bundles and their frame bundles}
\label{sec:semi-prin and frame bundle}

We will now use the results on the frames of a semi-torsor to introduce the frame bundle of a semi-principal bundle. This association is of functorial nature. We also discuss how a connection on a semi-principal bundle can be defined and how it carries over to frame bundle, which happens in such a way that the parallel transport can be described more easily.

\subsection{The frame bundle of a semi-principal bundle}

In Sec.~\ref{sec:bases of group-sets}, we saw that a semi-torsor has an associated space of frames, which is itself a torsor. This hints that a semi-principal $G$-bundle $\pi \from B \to M$ with model fiber $F$ has an associated frame bundle, which is a principal bundle. This is indeed what we will show next.

We will stay close to the earlier argument, following that $\Fr(F)$ is a special subspace of $F^X$, where again $X$ is an index set bijective to $F/G$. For this analogy, we need the $X$-fold fiber-wise product bundle $\pi^{\odot X} \from B^{\odot X}\to M$ of $B$. It is convenient to realize $B^{\odot X}$ as a pull-back of the $X$-fold direct product bundle
\begin{equation}
    \begin{tikzcd}
        F^X \ar{r} & B^X \ar{r}{\pi^X} & M^X.
    \end{tikzcd}
\end{equation}
As $B$ is a $G$-space, clearly $B^X$ is a $G\wr X$-space by a similar argument as we used for $F^X$ in Prop.~\ref{prop:GwrX action}. Likewise, $M^X$ is a $\Sym(X)$-space, and $\pi^X$ is equivariant for the canonical group map $G\wr X \to \Sym(X)$. Moreover, $\pi^X$ admits $G\wr X$-equivariant local trivializations. Indeed, let $\phi \from U \times F \to B|_U$ be a local $G$-equivariant trivialization of $B$. This induces the map
\begin{equation}
    \begin{split}
        \phi^X \from U^X \times F^X &\to B^X|_{U^X}\\
        (u(x),\tilde{f}(x)) &\mapsto \phi(u(x),\tilde{f}(x)),
    \end{split}
\end{equation}
which is a local trivialization of $B^X$. It is $G\wr X$-equivariant; the $G^X$-equivariance is by construction, and the $\Sym(X)$-equivariance is immediate, cf the verification in the proof of Lemma~\ref{lem:frame map iff alpha/ iso}.

Let us move to $B^{\odot X}$. Obviously, tuples in $B^X$ may contain elements from different fibers of $B$, and we obtain the fiber-wise product by demanding that they come from a single fiber. This can be expressed geometrically using the constant/diagonal map $c\from M \to M^X$ sending $m\in M$ to the constant function $[x\mapsto m]$. This realizes the fiber-wise product bundle $B^{\odot X}$ as a pull-back, whose corresponding diagram is
\begin{equation}
    \begin{tikzcd}
        B^{\odot X} \ar{d}{\pi^{\odot X}} \ar[hook]{r} & B^X \ar{d}{\pi^X}\\
        M \ar[hook]{r}{c} & M^X
    \end{tikzcd}
\end{equation}
so that both bundles have model fiber $F^X$. In this way the topology and manifold structure of $B^{\odot X}$ are clear, and $\pi^{\odot X}$ is smooth. Moreover, as the image of $c$ is $\Sym(X)$-invariant, it follows that $B^{\odot X}$ inherits a $G\wr X$-action from $B^X$ that leaves $\pi^{\odot X}$ invariant. A local trivialization $\phi$ of $B$ induces the local trivialization $\phi^{\odot X}$ of $B^{\odot X}$ given by
\begin{equation}
    \begin{split}
        \phi^{\odot X} \from U \times F^X &\to B^{\odot X}\\
        (u,\tilde{f}(x)) &\mapsto \phi(u,\tilde{f}(x)),
    \end{split}
\end{equation}
which is $G\wr X$-equivariant. We thus obtain the following result.
\begin{Proposition} \label{prop:BodotX is GwrX-space bundle}
    Given a semi-principal $G$-bundle $\pi \from B \to M$ with index set $X$, then the fiber-wise product bundle $F^X \to B^{\odot X} \to M$ is canonically a $G\wr X$-space bundle.
\end{Proposition}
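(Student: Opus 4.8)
The plan is to confirm the two defining axioms of a $G\wr X$-space bundle for the data $(\pi^{\odot X},B^{\odot X},M,F^X)$, namely a fiber-preserving $G\wr X$-action making each fiber a $G\wr X$-space, together with $G\wr X$-equivariant local trivializations. Since the surrounding discussion has already produced both the action on $B^X$ and the trivialization $\phi^{\odot X}$, I would present the proof as an assembly of these constructions rather than build anything new: the task is to check that they restrict correctly to the pull-back $B^{\odot X}=(\pi^X)^{-1}(c(M))$ and satisfy the axioms.

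First I would record the $G\wr X$-action on $B^X$ obtained exactly as in Prop.~\ref{prop:GwrX action}: the factor $G^X$ acts slot-wise through the $G$-action on $B$, while $\Sym(X)$ permutes the slots. This action covers the permutation action of $\Sym(X)$ on $M^X$, in the sense that $\pi^X$ is equivariant for the canonical projection $G\wr X\to \Sym(X)$. The one genuinely content-bearing point is then that the diagonal image $c(M)\subset M^X$ is fixed pointwise by coordinate permutations, hence $\Sym(X)$-invariant; this forces the $G\wr X$-action to preserve $B^{\odot X}=(\pi^X)^{-1}(c(M))$ and to cover the trivial action on $M$. In other words, the induced action on $B^{\odot X}$ is fiber-preserving for $\pi^{\odot X}$, which is the first half of the first axiom.

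Next I would identify the fiber of $B^{\odot X}$ over $m\in M$ with $(B_m)^X\cong F^X$ and invoke Prop.~\ref{prop:GwrX action} directly to conclude that this action makes each fiber a $G\wr X$-space, completing the first axiom; smoothness of the action as a bundle map is inherited from the smooth $G$-action on $B$. For local triviality I would use $\phi^{\odot X}\from U\times F^X\to (\pi^{\odot X})^{-1}(U)$: it is a diffeomorphism satisfying $\pi^{\odot X}\circ \phi^{\odot X}=\pr_U$ because $\phi$ does so slot-wise, and its restriction to each $\{u\}\times F^X$ is a $G\wr X$-isomorphism since $\phi^{\odot X}$ intertwines both the slot-wise $G^X$-action (as $\phi$ is $G$-equivariant) and the slot-permuting $\Sym(X)$-action (as in the computation in the proof of Lemma~\ref{lem:frame map iff alpha/ iso}). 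This establishes the second axiom.

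The main obstacle --- really the only step that is not bookkeeping --- is the compatibility of the slot-permuting $\Sym(X)$-action with the fiber-wise product constraint: one must be sure that reshuffling the entries of a tuple all drawn from a single fiber $B_m$ again lands in $(B_m)^X$, which is precisely the $\Sym(X)$-invariance of $c(M)$ noted above. The remaining verifications (the action axioms, $\pi^{\odot X}\circ \phi^{\odot X}=\pr_U$, and fiberwise equivariance) are routine once this is in place. Finally, the word ``canonically'' in the statement I would justify by observing that the action is defined without reference to any trivialization, so that different choices of $\phi$ induce the same $G\wr X$-space bundle structure.
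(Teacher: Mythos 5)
Your proposal is correct and follows essentially the same route as the paper, which presents this argument in the discussion preceding the proposition rather than in a formal proof: the $G\wr X$-action on $B^X$, the $\Sym(X)$-invariance (indeed pointwise fixing) of the diagonal $c(M)$ guaranteeing that the action restricts to $B^{\odot X}$ and preserves fibers over $M$, and the induced equivariant trivializations $\phi^{\odot X}$. The one point you highlight as the only non-bookkeeping step is exactly the point the paper singles out as well.
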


We can now view the frame bundle $\Fr(B)$ of $B$ as the subspace of $B^{\odot X}$ where the elements define bases, as done formally in the following definition. Given the previous result, it is only a minor step to prove the bundle property of $\Fr(B)$, as we do directly afterwards.
\begin{Definition} \label{def:FrG(B)}
    Given a semi-principal $G$-bundle $\pi\from B\to M$, its frame bundle is defined as the bundle
    \begin{equation} \label{eq:definition of FrG(B)}
        \Fr(B)=\set{\tilde{b}\in B^{\odot X}}{\tilde{b} \text{ is a basis of the corresponding fiber of } B}.
    \end{equation}
\end{Definition}
\begin{Theorem} \label{thm:FrG(B) PFB}
    If $\pi\from B\to M$ is a semi-principal $G$-bundle, then its frame bundle $\Fr(B)$ is an open subbundle of $B^{\odot X}$, and forms a principal bundle
    \begin{equation}
        \begin{tikzcd}
            G\wr X \ar{r} & \Fr(B) \ar{r} & M.
        \end{tikzcd}
    \end{equation}
\end{Theorem}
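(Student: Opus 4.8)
The plan is to transport everything to the model fiber via the equivariant trivializations $\phi^{\odot X}$ constructed just above, so that all three claims --- openness, local triviality, and the principal structure --- reduce to the corresponding statements about $\Fr(F)\subset F^X$ already established in Section~\ref{sec:bases of group-sets}.

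First I would record the fiber-wise characterization of frames. Fix a local $G$-equivariant trivialization $\phi\from U\times F\to B|_U$, so that for each $u\in U$ the restriction $\phi|_{\{u\}\times F}\from F\to B_u$ is an isomorphism of $G$-spaces. By the third part of Lemma~\ref{lem:G-map properties}, such an isomorphism sends bases to bases, and so does its inverse. Hence a tuple $\tilde{b}\in B^{\odot X}_u$ is a basis of $B_u$ if and only if $(\phi^{\odot X})^{-1}(\tilde{b})$ lies in $U\times \Fr(F)$; equivalently, $\phi^{\odot X}$ restricts to a bijection $U\times \Fr(F)\to \Fr(B)|_U$.

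Next I would deduce openness and local triviality at once. Since $\Fr(F)$ is open (indeed open and closed) in $F^X$ by Cor.~\ref{cor:FrG(F) open and closed in FX}, and $\phi^{\odot X}$ is a diffeomorphism onto $B^{\odot X}|_U$, the set $\Fr(B)|_U=\phi^{\odot X}(U\times \Fr(F))$ is open in $B^{\odot X}|_U$; ranging over a trivializing cover shows that $\Fr(B)$ is an open subbundle. The same restricted map $\phi^{\odot X}|_{U\times \Fr(F)}$ is then a local trivialization of $\Fr(B)$ with model fiber $\Fr(F)$.

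Finally I would install the principal structure. By Prop.~\ref{prop:BodotX is GwrX-space bundle}, $B^{\odot X}$ carries a fiber-preserving $G\wr X$-action for which $\phi^{\odot X}$ is equivariant; since $\Fr(F)$ is a $G\wr X$-invariant subset of $F^X$ by Prop.~\ref{prop:GwrX action}, this action restricts to $\Fr(B)$. Transporting through $\phi^{\odot X}$, each fiber $\Fr(B)_u$ becomes $G\wr X$-isomorphic to $\Fr(F)$, which is a $G\wr X$-torsor again by Prop.~\ref{prop:GwrX action}; thus the action is free and transitive on every fiber. Together with the equivariant local trivializations just obtained, this is exactly the data of a principal $G\wr X$-bundle. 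I expect the only point demanding genuine care to be the fiber-wise matching in the first step --- that the intrinsic property of being a basis of $B_u$ agrees with being a basis of $F$ read off in any trivialization --- which is precisely why Lemma~\ref{lem:G-map properties} is invoked; everything afterwards is a direct transport of the model-fiber facts through the equivariant diffeomorphisms.
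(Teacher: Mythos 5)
Your proposal is correct and follows essentially the same route as the paper: transport everything through the equivariant trivialization $\phi^{\odot X}$, use Cor.~\ref{cor:FrG(F) open and closed in FX} for openness, and Prop.~\ref{prop:GwrX action} together with Prop.~\ref{prop:BodotX is GwrX-space bundle} for the principal structure. Your explicit appeal to Lemma~\ref{lem:G-map properties} for the fiber-wise matching of bases is a point the paper leaves implicit, but it is the same argument.
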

\begin{proof}
    Given Prop.~\ref{prop:BodotX is GwrX-space bundle}, it suffices to verify that $\Fr(F)$ is locally an open subbundle of $B^{\odot X}$. Let $U\subset M$ be a trivializing open of $B$, consider the induced local trivialization $\phi^{\odot X} \from U\times F^X \to B^{\odot X}|_U$. As $\Fr(F)$ is open and closed in $F^X$ by Cor.~\ref{cor:FrG(F) open and closed in FX}, the map $\phi^{\odot X}$ restricts to a $G\wr X$-equivariant isomorphism from $U \times \Fr(F)$ onto its image. This image is $\Fr(B)|_U$ as for each $u\in U$ the map $\phi$ establishes an isomorphism $F \to B_u$ of $G$-spaces. Hence $\Fr(B)$ is an open subbundle of $B^{\odot X}$ with model fiber $\Fr(F)$. As $\Fr(F)$ is a $G\wr X$-torsor by Prop.~\ref{prop:GwrX action}, the bundle is principal.
\end{proof}

We place the following comment concerning the notation $\Fr(B)$. This notation makes explicit reference to the $G$-manifold $B$, but implicitly uses the bundle map $\pi$ as well. As seen in Ex.~\ref{ex:winding torus 4} below, different projection maps may yield different frame bundles, yet the total space of these projection maps is the same $G$-manifold. We thus write $\Fr(B)$ with the understanding that $B$ is short for the entire bundle structure.
\begin{Example}[Frame bundle of winding torus] \label{ex:winding torus 4}
    In Ex.~\ref{ex:winding torus 3} we saw that $\Fr(S^1\times I_k)$ is the $U(1)\wr I_k$-torsor $T^k\times S_k$. The frame bundle of the bundle $\Pi_k$ can be envisioned as these $k!$ tori moving along the circle, where each torus takes the place of its $(1\; 2\; \dots \; k)$ shifted version (assuming the labels follow the order from $\Pi_k$). In other words, the gluing needed to obtain the frame bundle of $\Pi_k$ from $[0,1]\times (T^k \times S_k)$ can be expressed using the group element $(\id_{U(1)^k},(1\; 2\; \dots \; k))\in U(1)\wr I_k$, as we may assume the angles to match. The total space of the frame bundle thus consists of $(k-1)!$ components, where each component is a concatenation of cyclically permuting tori. We observe that the total space of each $\Pi_k$ is the torus $T^2$, yet the obtained frame bundles are different.
\end{Example}

We wish to finish our treatise on $\Fr(B)$ by emphasizing the need to leave $B$ behind if one wishes to use a well-defined permutation action on the bundle. In the case of $\Fr(B)$, one uses the globally defined order of tuples to define the permutation action. One may ask if the local labelling indexed by $X$ also suffices to define a permutation action. As we discuss in Appendix~\ref{sec:S_n action iff trivial}, the answer is negative in general.

\subsection{Functoriality of taking the frame bundle}
\label{sec:functor}

We finished the previous section with the result that any semi-principal bundle has an associated principal frame bundle. Again, this association can be viewed as a functor, similar to the semi-torsor to torsor functor in Thm.~\ref{thm:functor semi-torsor -> torsor}. Clearly, the objects of the source category are the semi-principal bundles. However, the admissible maps will not be all equivariant bundle maps; inspecting a single fiber brings us back to Lemma~\ref{lem:frame map iff alpha/ iso}, which indicates that a map should be a bijection on orbits. This condition readily lifts to the level of bundles, and is a sufficient condition to obtain an induced map on the frame bundles as proven below.
\begin{Lemma} \label{lem:bundle bases to bases}
    Let $\alpha \from B_1 \to B_2$ be an equivariant bundle map of semi-principal bundles over $M$, where the map on groups is $\xi \from G_1 \to G_2$, and $X_1\cong X_2\cong X$. Then $\alpha$ descends to a fiber-wise map on orbits $\alpha/ \from B_1/G_1 \to B_2/G_2$. That is, in the following diagram the lower arrow exists and is a bundle map.
    \begin{equation} \label{eq:diagram alpha/}
        \begin{tikzcd}
            B_1 \ar{r}{\alpha} \ar{d}{Q_1} & B_2 \ar{d}{Q_2}\\
            B_1/G_1 \ar[dashed]{r}{\alpha/} & B_2/G_2
        \end{tikzcd}
    \end{equation}
    Moreover, $\alpha$ induces a map of bundles
    \begin{equation}
        \begin{split}
            \alpha! \from \Fr(B_1) &\to \Fr(B_2)\\
            \tilde{b} &\mapsto \alpha\circ\tilde{b}
        \end{split}
    \end{equation}
    if and only if $\alpha/ \from B_1/G_1 \to B_2/G_2$ is a bundle isomorphism. In this case, $\alpha!$ is equivariant w.r.t.\ the map $\xi!=(\xi^X,\id_{\Sym(X)}) \from G_1\wr X \to G_2 \wr X$, so in particular $\alpha!$ is a map of principal bundles.
\end{Lemma}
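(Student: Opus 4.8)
The plan is to lift the fiber-wise statement of Lemma~\ref{lem:frame map iff alpha/ iso} to the bundle level, with the extra work lying entirely in the continuity/smoothness bookkeeping, which I would handle via local trivializations together with the quotient maps $Q_i$. Throughout I use that, by the paper's convention, $\alpha$ covers $\id_M$, so it preserves fibers.

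First I would construct $\alpha/$ and check it is a bundle map. Since $\alpha$ is $\xi$-equivariant and fiber-preserving, for each $m\in M$ the restriction $\alpha|_{(B_1)_m}$ carries $G_1$-orbits into $G_2$-orbits inside $(B_2)_m$; hence $Q_2\circ\alpha$ is $G_1$-invariant and descends set-theoretically to $\alpha/\from B_1/G_1 \to B_2/G_2$, which is the fiber-wise content of Lemma~\ref{lem:frame map iff alpha/ iso} applied along $M$. To see it is a bundle map, note that by Prop.~\ref{prop:semi-prin = prin + covering} the map $Q_1$ is a surjective submersion and the $B_i/G_i$ are total spaces of $|X|$-fold coverings of $M$; since $Q_2\circ\alpha$ is smooth and constant on the fibers of $Q_1$, the map $\alpha/$ is smooth (exactly as in the proof of Prop.~\ref{prop:semi-prin = prin + covering}), and it covers $\id_M$.

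Next I would pin down when $\alpha!\from \tilde{b}\mapsto \alpha\circ\tilde{b}$ is well-defined, i.e.\ lands in $\Fr(B_2)$. For a frame $\tilde{b}$ of $(B_1)_m$, Lemma~\ref{lem:frame map iff alpha/ iso} says $\alpha\circ\tilde{b}$ is a frame of $(B_2)_m$ precisely when the orbit-level map of $\alpha|_{(B_1)_m}$ is a bijection; but that orbit-level map is exactly the restriction of $\alpha/$ to the fiber $(B_1/G_1)_m$. Thus $\alpha!$ is well-defined if and only if $\alpha/$ is a fiber-wise bijection. The step I expect to be the main obstacle is upgrading ``fiber-wise bijection'' to ``bundle isomorphism''. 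Here I would argue locally: over a connected trivializing open $U$ the bundle map $\alpha/$ reads as $(u,x)\mapsto (u,h_u(x))$, and continuity into the discrete set $X$ forces $u\mapsto h_u$ to be locally constant, hence a fixed map $h\from X\to X$ on $U$. If $\alpha/$ is a fiber-wise bijection then $h\in\Sym(X)$, so $\alpha/|_U$ has the evident continuous inverse $(u,y)\mapsto (u,h^{-1}(y))$; the local inverses agree fiber-wise and glue, showing $\alpha/$ is a bundle isomorphism. Combining the two equivalences yields the stated iff.

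Finally I would verify that, when it exists, $\alpha!$ is automatically a smooth map of principal bundles. I realize $\alpha!$ as the restriction, to the open subbundle $\Fr(B_1)\subset B_1^{\odot X}$ of Thm.~\ref{thm:FrG(B) PFB}, of the coordinate-wise map $\alpha^{\odot X}\from B_1^{\odot X}\to B_2^{\odot X}$ induced by $\alpha$; this is smooth because $\alpha$ is, and it carries $\Fr(B_1)$ into $\Fr(B_2)$ exactly under the condition just established. Equivariance with respect to $\xi!=(\xi^X,\id_{\Sym(X)})$ follows verbatim from the computation in Lemma~\ref{lem:frame map iff alpha/ iso}: the $G^X$-part is coordinate-wise $\xi$-equivariance and the $\Sym(X)$-part is the commutation of permutation with post-composition. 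Since a $\xi!$-equivariant bundle map between principal bundles is a map of principal bundles, this completes the argument.
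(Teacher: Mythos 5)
Your proposal is correct and follows essentially the same route as the paper: construct $\alpha/$ via $G_1$-invariance of $Q_2\circ\alpha$ and the submersion $Q_1$, realize $\alpha!$ as the restriction of $\alpha^{\odot X}$ to the open subbundle $\Fr(B_1)$, reduce the frame condition fiber-wise to Lemma~\ref{lem:frame map iff alpha/ iso}, and inherit equivariance from the same lemma. The only difference is that you spell out why a fiber-wise bijective bundle map of coverings is an isomorphism (local constancy into the discrete fiber), a point the paper asserts without elaboration.
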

\begin{proof}
    As $\alpha$ is equivariant $\alpha/$ exists as a function, which is fiber-preserving as the action is so. To prove it is smooth, one can look locally over a trivializing open or use that $Q_1$ is a surjective submersion. Hence $\alpha/$ is a well-defined bundle map.
    As $\alpha$ is a bundle map, so is its product $\alpha^{\odot X}\from B_1^{\odot X} \to B_2^{\odot X}$. The restriction of $\alpha^{\odot X}$ is also a bundle map, as $\Fr(B_1)$ is an open subbundle of $B_1^{\odot X}$ by Theorem~\ref{thm:FrG(B) PFB}. This restriction takes image in $\Fr(B_2)$ if and only if $\forall m\in M$ the map $\alpha_m \from (B_1)_m \to (B_2)_m$ maps bases to bases. By Lemma~\ref{lem:frame map iff alpha/ iso}, this is if and only if $\alpha/_m\from (B_1/G_1)_m \to (B_2/G_2)_m$ is a bijection for all $m$. However, as $\alpha/$ is a bundle map this is equivalent to $\alpha/$ being an isomorphism. The equivariance of $\alpha$ readily follows; a similar derivation as in Lemma~\ref{lem:frame map iff alpha/ iso} applies. Hence $\alpha^{\odot X}$ restricts to the map $\alpha! \from \Fr(B_1) \to \Fr(B_2)$ if and only if $\alpha/$ is a bundle isomorphism, in which case $\alpha!$ is a map of principal bundles.
\end{proof}

This result tells us that the frame functor is not applicable to the full subcategory of group-manifold bundles obtained by restricting the fibers to semi-torsors but retaining all possible maps of group-manifold bundles. To obtain a genuine frame functor, we must restrict the maps $\alpha$ to those which satisfy that $\alpha/$ is a bundle isomorphism. Similar to the semi-torsor case in Thm.~\ref{thm:functor semi-torsor -> torsor}, this will yield a subcategory; $\id_B /=\id_{B/G}$ is always an isomorphism, and a composition of admissible maps is again admissible as $(\alpha \beta)/=\alpha/ \beta/$. Because of its interest in the frame bundle construction, we wish to list this category in a definition.
\begin{Definition}
    The \emph{category of semi-principal bundles over a base $M$}, denoted $\SPrin(M)$, is defined to have as objects the semi-principal bundles over $M$ and as morphisms all equivariant bundles maps $\alpha$ such that $\alpha/$ is an isomorphism. Given a Lie group $G$, write $\SPrin_G(M)$ for the restriction of $\SPrin(M)$ to semi-principal $G$-bundles and $\id_G$-equivariant maps.
\end{Definition}
Clearly, as we deliberately discard some maps by the orbit condition, $\SPrin(M)$ is a subcategory but not a full subcategory of the group-manifold bundles over $M$. On the other hand, the principal bundles and their morphisms are still contained in the semi-principal bundles. Let us write $\Prin(M)$ for the category of principal bundles, and $\Prin_G(M)$ for the restriction to principal $G$-bundles and $\id_G$-equivariant maps with $G$ a Lie group. We then find that $\SPrin(M)$ can be seen as a specific extension of $\Prin(M)$.
\begin{Lemma}
    The category $\Prin(M)$ is a full subcategory of $\SPrin(M)$. That is, all principal bundles are semi-principal and $\Hom_{\Prin(M)}(P,P')=\Hom_{\SPrin(M)}(P,P')$ whenever $P$ and $P'$ are principal bundles over $M$.
\end{Lemma}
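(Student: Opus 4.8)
The plan is to split the claim into its two assertions—an object-level inclusion and a morphism-level equality—and handle them separately. For the objects, I would simply recall that a principal $G$-bundle is by definition a group-space bundle whose model fiber is a $G$-torsor, and that a torsor is a semi-torsor since its orbit space consists of a single point, hence is discrete (as remarked just after the definition of semi-torsor). Thus every principal bundle is a semi-principal bundle, which settles the inclusion on objects.

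For the morphisms, I would establish the two inclusions of $\Hom$-sets. The inclusion $\Hom_{\SPrin(M)}(P,P') \subseteq \Hom_{\Prin(M)}(P,P')$ is immediate: any morphism of $\SPrin(M)$ is in particular an equivariant bundle map, and since the principal bundles form a full subcategory of the group-space bundles, such a map between principal bundles is exactly a morphism of $\Prin(M)$.

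The substance lies in the reverse inclusion $\Hom_{\Prin(M)}(P,P') \subseteq \Hom_{\SPrin(M)}(P,P')$, where I must verify that the extra condition defining $\SPrin(M)$—namely that $\alpha/$ be a bundle isomorphism—holds automatically for any equivariant bundle map $\alpha \from P \to P'$ between principal bundles. The key observation is that for a principal bundle the projection $\pi$ coincides with the action quotient $Q$ (as noted after Prop.~\ref{prop:semi-prin = prin + covering}), so that $P/G$ is canonically identified with $M$, and likewise $P'/G'$ with $M$. Since every bundle map considered here covers $\id_M$, the commuting square defining $\alpha/$ forces $\alpha/ \circ \pi = \pi' \circ \alpha = \pi$; surjectivity of $\pi$ then yields $\alpha/ = \id_M$ under these identifications, independently of the accompanying group homomorphism $\xi \from G \to G'$. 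Being the identity, $\alpha/$ is \emph{a fortiori} a bundle isomorphism, so $\alpha$ qualifies as a morphism of $\SPrin(M)$.

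I do not anticipate a genuine obstacle; the only point requiring care is the bookkeeping of the identifications $P/G \cong M \cong P'/G'$, which rests on the coincidence $\pi = Q$ for principal bundles. Once these identifications are fixed, the verification that $\alpha/$ is the identity is a one-line diagram chase, and the equality of $\Hom$-sets—hence the fullness of $\Prin(M)$ inside $\SPrin(M)$—follows at once.
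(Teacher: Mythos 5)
Your proposal is correct and follows essentially the same route as the paper's proof: torsors are semi-torsors for the object inclusion, and the identifications $P/G\cong M\cong P'/G'$ force $\alpha/=\id_M$, so the extra condition on morphisms in $\SPrin(M)$ is vacuous between principal bundles. The additional diagram chase you spell out ($\alpha/\circ\pi=\pi'\circ\alpha=\pi$ plus surjectivity of $\pi$) is just a more explicit version of the paper's one-line argument.
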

\begin{proof}
    All principal bundles are semi-principal bundles as any torsor is a semi-torsor. Pick a principal $G$-bundle $P$ and a principal $G'$-bundle $P'$ over $M$, and consider an equivariant bundle map $\alpha \from P \to P'$. As $P/G\cong M$ and $P'/G'\cong M$ and $\alpha/$ preserves fibers, one has $\alpha/=\id_M$ up to isomorphism. That is, the condition that $\alpha/$ is an isomorphism is automatically satisfied. As this is the only axiom that distinguishes between morphisms in $\Prin(M)$ and $\SPrin(M)$, it follows that $\Hom_{\Prin(M)}(P,P')=\Hom_{\SPrin(M)}(P,P')$.
\end{proof}

Back to the frame bundles, one may observe that taking the frame bundle maps $\SPrin(M)$ to its full subcategory $\Prin(M)$. This association is functorial, and can be interpreted as a kind of retract. The formal statement is the following theorem, which is the bundle equivalent of Thm.~\ref{thm:functor semi-torsor -> torsor}.
\begin{Theorem}
    The assignment
    \begin{equation}
        \begin{split}
            B &\mapsto \Fr(B)\\
            \alpha &\mapsto \alpha!
        \end{split}
    \end{equation}
    defines a covariant functor $\SPrin(M) \to \Prin(M)$. Moreover, this functor is (essentially) constant on $\Prin(M)$.
\end{Theorem}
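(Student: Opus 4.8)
The plan is to transcribe the proof of Theorem~\ref{thm:functor semi-torsor -> torsor} to the bundle setting, where essentially all the analytic content has already been isolated in Theorem~\ref{thm:FrG(B) PFB} and Lemma~\ref{lem:bundle bases to bases}. First I would record that the source $\SPrin(M)$ is a well-defined category, as already observed when it was introduced: $\id_B/=\id_{B/G}$ is an isomorphism, and for composable morphisms $\alpha,\beta$ one has $(\alpha\beta)/=\alpha/\,\beta/$, so the orbit-isomorphism condition is preserved under composition. Next, well-definedness of the assignment: on objects, Theorem~\ref{thm:FrG(B) PFB} guarantees that $\Fr(B)$ is a principal $G\wr X$-bundle over $M$, hence an object of $\Prin(M)$; on morphisms, Lemma~\ref{lem:bundle bases to bases} guarantees that for a morphism $\alpha$ of $\SPrin(M)$ (i.e.\ with $\alpha/$ a bundle isomorphism) the induced $\alpha!$ is a well-defined map of principal bundles, equivariant with respect to $\xi!=(\xi^X,\id_{\Sym(X)})$. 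This simultaneously shows the image lands in $\Prin(M)$.

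The functor axioms then follow directly from the explicit formula $\alpha!(\tilde b)=\alpha\circ\tilde b$ from Lemma~\ref{lem:bundle bases to bases}, exactly as in Theorem~\ref{thm:functor semi-torsor -> torsor}. For identities, $\id_B!(\tilde b)=\id_B\circ\tilde b=\tilde b$, so $\id_B!=\id_{\Fr(B)}$. For composition, for any frame $\tilde b$ of a fiber of $B_1$,
\begin{equation}
    (\alpha\beta)!(\tilde b)=(\alpha\beta)\circ\tilde b=\alpha\circ(\beta\circ\tilde b)=\alpha!\bigl(\beta!(\tilde b)\bigr),
\end{equation}
so $(\alpha\beta)!=\alpha!\,\beta!$ and the assignment is a covariant functor into $\Prin(M)$.

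It remains to treat the claim on $\Prin(M)$. If $B$ is already a principal $G$-bundle, its model fiber is a torsor, so $X=F/G$ is a single point; then $B^{\odot X}\cong B$ and, exactly as in the torsor case of Theorem~\ref{thm:functor semi-torsor -> torsor}, the basis condition in Definition~\ref{def:FrG(B)} is vacuous, giving $\Fr(B)=B^{\odot X}\cong B$. Under this singleton identification $\alpha!$ corresponds to $\alpha$ itself, so the functor restricts on $\Prin(M)$ to something naturally isomorphic to the identity (which is what ``essentially constant'' is to mean here, paralleling the torsor statement). The one step that is not pure bookkeeping, and thus the main thing to pin down, is the \emph{naturality} of the identification $\Fr(B)\cong B$: one should check that the canonical evaluation-at-the-unique-index isomorphism commutes with $\alpha!$ and $\alpha$ for every morphism $\alpha$ of $\Prin(M)$, so that ``essentially'' is genuinely witnessed by a natural isomorphism of functors rather than merely a fiber-wise one. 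This is a short diagram check using the formula $\alpha!(\tilde b)=\alpha\circ\tilde b$ together with the fact that, for a one-point index set, a frame $\tilde b$ is just an element of the fiber; I expect no real difficulty beyond bookkeeping, but it is the point at which the proof goes slightly beyond invoking the two cited results.
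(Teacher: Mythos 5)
Your proposal is correct and follows the same route as the paper's own proof: well-definedness on objects and morphisms via Theorem~\ref{thm:FrG(B) PFB} and Lemma~\ref{lem:bundle bases to bases}, the functor axioms from the explicit formula $\alpha!(\tilde b)=\alpha\circ\tilde b$, and the vacuous basis condition giving $\Fr(B)=B^X\cong B$ when $X$ is a point. Your added remark on checking naturality of the identification $\Fr(B)\cong B$ is a reasonable refinement of what the paper leaves implicit in ``(essentially) constant,'' but it does not change the argument.
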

\begin{proof}
    By Theorem~\ref{thm:FrG(B) PFB} this functor is well-defined on the objects, and by Lemma~\ref{lem:bundle bases to bases} it is also well-defined concerning morphisms. It is easily seen that identity maps and compositions are preserved, hence the map is a (covariant) functor. Also, if $B$ is a principal bundle, i.e.\ $X$ is a point, then the basis condition is vacuous and $\Fr(B)=B^X\cong B$.
\end{proof}

\begin{Example}
    An remarkable application of this result is the quotient map $Q \from B \to B/G$. Indeed, this is a morphism in $\SPrin(M)$, where we view $B/G$ as an semi-principal bundle with trivial structure group. The corresponding bundle map diagram is
    \begin{equation} \label{eq:B B/G M triangle}
        \begin{tikzcd}[column sep=1em]
            B \ar{rr}{G} \ar[swap]{rd}{F} && B/G \ar{ld}{X}\\
            &M
        \end{tikzcd}
    \end{equation}
    with model fibers indicated. Applying the functor results in the bundle map diagram 
    \begin{equation} \label{eq:Fr_G(B) Fr(B/G) M triangle}
        \begin{tikzcd}[column sep=1em]
            \Fr(B) \ar{rr}{G^X} \ar[swap]{rd}{ G\wr X} && \Fr(B/G) \ar{ld}{\Sym(X)}\\
            &M
        \end{tikzcd}
    \end{equation}
    where each arrow is a principal bundle with the indicated structure group.
\end{Example}

As with the semi-torsors, also the frame bundle functor induces an equivalence of categories if one fixes the structure group $G$.
\begin{Proposition} \label{prop:cat eq sprin}
    For a fixed Lie group $G$ and base manifold $M$, the frame bundle functor induces an equivalence of categories
    \begin{equation}
        \SPrin_G^{<\infty}(M) \to \coprod_{n=1}^\infty \Prin_{G\wr I_n}(M),
    \end{equation}
    where the superscript $<\infty$ indicates the restriction to finite index sets.
\end{Proposition}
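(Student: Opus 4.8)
The plan is to verify the two properties that characterize an equivalence of categories, namely that the frame bundle functor is fully faithful and essentially surjective, and to do so one component of the coproduct at a time. Since every morphism $\alpha$ in $\SPrin_G^{<\infty}(M)$ has $\alpha/$ an isomorphism, it preserves the number of orbits per fiber; dually, the target $\coprod_{n} \Prin_{G\wr I_n}(M)$ has no morphisms between distinct components. Hence the functor respects the decomposition indexed by $n$, and it suffices to fix $n$ and show that $\Fr$ restricts to an equivalence between the full subcategory of semi-principal $G$-bundles with index set $I_n$ and $\Prin_{G\wr I_n}(M)$. Throughout, the fiber-wise statements are already available from Prop.~\ref{prop:cat eq stor}, so the work is to promote them to smooth families over $M$.

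For essential surjectivity I would mimic the construction in Prop.~\ref{prop:cat eq stor}. Given a principal $G\wr I_n$-bundle $P\to M$, I form $B:=P/(G\wr(I_n-1))$ by quotienting the total space fiber-wise by the stabilizer subgroup of Lemma~\ref{lem:torsor to semi-torsor}. By that lemma each fiber $B_m$ is isomorphic to $G\times I_n$ as a $G$-semi-torsor, and the residual $G$-action on $B$ comes from the copy of $G$ inside $N_{G\wr I_n}(G\wr(I_n-1))/(G\wr(I_n-1))$; this makes $B$ a semi-principal $G$-bundle once local triviality is checked against a local trivialization of $P$. Applying $\Fr$ fiber-wise and using $\Fr(G\times I_n)\cong G\wr I_n$ (Lemma~\ref{lem:torsor to semi-torsor} together with Prop.~\ref{prop:GwrX action}) gives $\Fr(B)_m\cong P_m$ as $G\wr I_n$-torsors, which I would then upgrade to an isomorphism of principal bundles via Thm.~\ref{thm:FrG(B) PFB}.

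For full faithfulness I would argue fiber-wise and globalize. Given semi-principal $G$-bundles $B,B'$ with index set $I_n$ and a morphism $\bar\alpha\from \Fr(B)\to\Fr(B')$ of principal $G\wr I_n$-bundles, each restriction $\bar\alpha_m\from \Fr(B_m)\to\Fr(B'_m)$ is a morphism of $G\wr I_n$-torsors, so by the fully faithful part of Prop.~\ref{prop:cat eq stor} it equals $\Fr(\alpha_m)$ for a unique $\alpha_m=\phi_{\bar\alpha_m(\tilde f)}\circ\phi_{\tilde f}^{-1}$. Collecting the $\alpha_m$ gives the only possible lift, so faithfulness is immediate from Lemma~\ref{lem:bundle bases to bases} (a bundle map is determined fiber-wise). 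For fullness I must check that $\alpha:=\{\alpha_m\}$ is a smooth bundle map in $\SPrin_G(M)$: choosing a smooth local section of $\Fr(B)$ as a local frame $\tilde f$, the formula $\alpha=\phi_{\bar\alpha(\tilde f)}\circ\phi_{\tilde f}^{-1}$ exhibits $\alpha$ as smooth, and $\alpha/$ is an isomorphism because $\bar\alpha$ is; by construction $\alpha!=\bar\alpha$.

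I expect the main obstacle to be exactly this globalization step: turning the purely fiber-wise, torsor-theoretic content of Prop.~\ref{prop:cat eq stor} into smooth statements over $M$, that is, verifying local triviality of $B=P/(G\wr(I_n-1))$ and smoothness of the assembled lift $\alpha$. Both are handled by passing to local trivializations of $P$ (resp.\ of $\Fr(B)$) and invoking that $\Fr(F)$ is an open and closed submanifold of $F^X$ (Cor.~\ref{cor:FrG(F) open and closed in FX}) together with Thm.~\ref{thm:FrG(B) PFB}. A secondary point requiring care is the bookkeeping of left versus right $G$-actions when endowing $B$ with its semi-torsor structure: the $G$-action that descends to the quotient is the one commuting with the $G\wr I_n$-action, and it must be matched, up to the isomorphism $(h,x)\mapsto(h^{-1},x)$, with the left-action convention used in the frame map $\phi_{\tilde f}$.
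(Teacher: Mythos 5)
Your proposal is correct and takes essentially the same route as the paper: the paper's own proof is a one-line reduction to Prop.~\ref{prop:cat eq stor}, asserting that quotienting by $G\wr(I_n-1)$ componentwise yields an inverse functor ``as can be verified using local triviality.'' You have simply expanded that verification --- essential surjectivity via the quotient construction of Lemma~\ref{lem:torsor to semi-torsor} and full faithfulness via the fiber-wise statement globalized with a smooth local frame --- which is precisely what the paper leaves implicit.
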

\begin{proof}
    This follows Prop.~\ref{prop:cat eq stor} on semi-torsors; quotienting by $G\wr (I_n-1)$ for each $n$ individually yields again an inverse functor, as can be verified using local triviality.
\end{proof}

We finish this exploration of the category $\SPrin(M)$ with the following results on its morphisms.
\begin{Lemma} \label{lem:local form map SPrin(M)}
    Let $\alpha\from B_1 \to B_2$ be a morphism in $\SPrin(M)$, equivariant w.r.t.\ a group map $\xi \from G_1 \to G_2$. Then $\alpha$ can be expressed locally as
    \begin{equation} \label{eq:zeta}
        \zeta=(\id_U,\xi,\id_X) \from U \times G_1 \times X \to U \times G_2 \times X
    \end{equation}
    where $U\subset M$ is a trivializing open of both $B_1$ and $B_2$.
\end{Lemma}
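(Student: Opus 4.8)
The plan is to reduce everything to a single pair of local trivializations over a common connected trivializing open $U \ni m$, and then adjust the trivialization of $B_2$ by a gauge transformation until $\alpha$ takes the stated normal form. Concretely, fix a $G_1$-equivariant trivialization $\phi_1 \from U \times G_1 \times X \to B_1|_U$ (writing the model semi-torsor as $G_1 \times X$) and any initial $G_2$-equivariant trivialization $\psi_2$ of $B_2|_U$. The composite $\zeta_0 = \psi_2^{-1} \circ \alpha \circ \phi_1$ is then a fiber-preserving map $U \times G_1 \times X \to U \times G_2 \times X$ which, restricted to each $u\in U$, is $\xi$-equivariant. The entire argument is the analysis of such fiber-wise $\xi$-equivariant maps, exactly as in Lemma~\ref{lem:frame map iff alpha/ iso}, but now carrying the base parameter $u$.

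First I would deal with the index set. Passing to orbits, $\zeta_0$ descends to the local form of $\alpha/$, which by hypothesis is a bundle isomorphism; over $U$ this is a fiber-preserving homeomorphism $U \times X \to U \times X$, hence $(u,x)\mapsto (u,\sigma_u(x))$ with $\sigma_u \in \Sym(X)$. Since $X$ is discrete, $u\mapsto \sigma_u(x)$ is locally constant for each $x$, so shrinking $U$ to be connected makes $\sigma_u\equiv\sigma$ a fixed permutation. Composing $\psi_2$ with the slot-permuting automorphism of $F_2=G_2\times X$ corresponding to $\sigma$ (an element of the $\Sym(X)$ part of $\Aut(F_2)\cong G_2\wr X$) replaces $\zeta_0$ by a map whose orbit map is $\id_X$; I may thus assume the orbit map is the identity from the start.

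With the orbit map equal to $\id_X$, each $\zeta_{0,u}$ sends $G_1\times\{x\}$ into $G_2\times\{x\}$, and on that slot it is a $\xi$-equivariant map between the torsors $G_1$ and $G_2$; such a map is necessarily $g\mapsto \xi(g)\,c_x(u)$ for a unique $c_x(u)\in G_2$, namely $c_x(u)=\pr_{G_2}\zeta_0(u,e,x)$, which is smooth in $u$. The collection $u\mapsto (c_x(u))_{x\in X}$ defines a smooth gauge transformation $\tau$ of $B_2|_U$ valued in the connected part $G_2^X \subset G_2\wr X$, acting by right-translating slot $x$ by $c_x(u)$. Setting $\phi_2=\psi_2\circ\tau$ then gives $\phi_2^{-1}\circ\alpha\circ\phi_1=\tau^{-1}\circ\zeta_0=(\id_U,\xi,\id_X)$, as required.

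The main obstacle to watch is the bookkeeping of the structure group of a semi-principal bundle: its gauge transformations form $\Aut(F_2)\cong G_2\wr X$, acting on $G_2\times X$ by simultaneously permuting the slots and right-translating within each slot, and one must use the slot-permuting part to normalize the orbit map and the right-translation part (i.e.\ $G_2^X$) to normalize the per-slot group elements, keeping the two adjustments from interfering. Getting the side of multiplication correct (equivariant maps of torsors are right translations composed with $\xi$) and checking that the resulting $G_2^X$-valued cocycle is smooth for possibly infinite discrete $X$ are the only genuinely delicate points; both follow from the same local, slot-by-slot reasoning already used for $\Fr$.
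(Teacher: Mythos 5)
Your proposal is correct and follows essentially the same route as the paper: take trivializations over a common open, normalize the orbit-level permutation by re-identifying the index set, and then absorb the remaining slot-wise $G_2$-valued function $c_x(u)=\pr_{G_2}\zeta_0(u,e,x)$ into the $G_2$-part of the trivialization of $B_2$, using that equivariance forces $\zeta_0(u,g,x)=\xi(g)\cdot\zeta_0(u,e,x)$. Your explicit observation that the fiber-wise permutation $\sigma_u$ is locally constant (so one should shrink $U$ to be connected) is a small point the paper leaves implicit, but it does not change the argument.
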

\begin{proof}
    For $i=1,2$, let $\phi_i \from U_i \times G_i\times X_i \to B_i|_{U_i}$ be local trivializations of $B_i$, where $U=U_1\cap U_2$ is non-empty. As $\alpha/$ is an isomorphism, it follows that $X_1 \cong X_2$, hence call them both $X$. Restricting the $\phi_i$ to $U$ and identifying $X_i\cong X$, one has the map $\zeta=\phi_2^{-1} \alpha \phi_1$. This is a bundle map, hence the $\id_U$ follows. It is also equivariant, hence a $G_1$-orbit $\{u\} \times G_1 \times \{x\}$ is mapped inside the $G_2$-orbit  $\{u\} \times G_2 \times \{x'\}$. The induced map $x \mapsto x'$ on orbits is a bijection by assumption, hence by identifying $X_2$ with $X$ differently if needed one may assume $x'=x$ for all $x\in X$, which proves the $\id_X$ part. The middle map thus remains. Here, note that equivariance reads $\zeta(u,g_1,x)=\xi(g_1)\cdot\zeta(u,e_1,x)$, such that $\zeta$ is fixed by the points $\zeta(u,e_1,x)=(u,g_2(u,x),x)$. This defines a smooth function $g_2(u,x)$, which can be translated to be constant $e_2$ by absorbing it in the $G_2$-part of $\phi_2$ (which does not interfere with the choice for $X$ earlier). However, then $\zeta(u,g_1,x)=\xi(g_1)\cdot\zeta(u,e_1,x)=\xi(g_1)\cdot(u,e_2,x)=(u,\xi(g_1),x)$, meaning $\zeta$ equals the proposed map.
\end{proof}

\begin{Corollary}
    The map $\alpha$ is injective/surjective if and only if $\xi$ is. Moreover, in case it is surjective $\alpha$ defines a principal $\ker(\xi)$-bundle.
\end{Corollary}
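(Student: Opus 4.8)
The plan is to reduce everything to the explicit local normal form supplied by Lemma~\ref{lem:local form map SPrin(M)}. That lemma conjugates $\alpha$, over a trivializing open $U\subset M$ and via the local trivializations $\phi_1,\phi_2$, to the product map $\zeta=(\id_U,\xi,\id_X)\from U\times G_1\times X\to U\times G_2\times X$. Since the $\phi_i$ are diffeomorphisms and the outer factors $\id_U,\id_X$ are bijections, the fiber-wise behaviour of $\alpha$ is governed entirely by $\xi$.

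First I would dispatch the injective/surjective equivalences. As $\alpha$ covers $\id_M$, it is injective (resp.\ surjective) if and only if it is so on each fiber, and on a trivializing patch the fiber map is $\zeta=\id_U\times\xi\times\id_X$, which is injective (resp.\ surjective) precisely when $\xi$ is. The only point to note is consistency of the gluing: the property ``$\xi$ injective/surjective'' does not depend on the chosen patch, so $\alpha$ has the property on one patch iff on all, hence globally.

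For the second claim, assume $\alpha$ (equivalently $\xi$) surjective and set $K=\ker(\xi)$, a closed, hence Lie, and normal subgroup of $G_1$. I would verify the three defining features of a principal $K$-bundle $\alpha\from B_1\to B_2$. \emph{Freeness:} $K\le G_1$ acts freely on $B_1$, being the restriction of the free $G_1$-action on a semi-torsor bundle. \emph{Fibers are orbits:} locally the $\zeta$-fiber through $(u,h,x)$ is the coset $\{u\}\times hK\times\{x\}$, whereas the $K$-orbit is $\{u\}\times Kh\times\{x\}$, and these agree because $K$ is normal; together with surjectivity this identifies $\alpha$ with the orbit projection $B_1\to B_1/K\cong B_2$. \emph{Local triviality:} here I would use that the surjective homomorphism $\xi$ of finite-dimensional Lie groups has constant rank, hence is a submersion and thus a principal $K$-bundle $G_1\to G_2$ with local sections $s\from V\to G_1$; the map $((u,g_2,x),k)\mapsto(u,k\,s(g_2),x)$ is then a $K$-equivariant local trivialization of $\alpha$ over the preimage of the open set $U\times V\times X\subseteq B_2|_U$.

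The step I expect to be the main obstacle is this last one: one must produce local trivializations of $\alpha$ relative to the base $B_2$ rather than relative to $M$, and this is exactly where the Lie-group hypothesis on $\xi$ is indispensable, since without the submersion property (and the ensuing local sections of $\xi$) the $K$-factor could not be split off smoothly. Two smaller bookkeeping points deserve care: matching the left coset $hK$ with the $K$-orbit $Kh$ relies on normality of $K$, and the left-action convention must be respected when writing the trivialization so that it comes out $K$-equivariant.
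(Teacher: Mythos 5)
Your proposal is correct and follows essentially the same route as the paper: both arguments reduce $\alpha$ to the local normal form $\zeta=(\id_U,\xi,\id_X)$ from Lemma~\ref{lem:local form map SPrin(M)}, read off injectivity/surjectivity from $\xi$, and then exploit that a surjective Lie group homomorphism $\xi$ makes $G_1\to G_2$ a principal $\ker(\xi)$-bundle to trivialize $\alpha$ locally over $B_2$. The only cosmetic difference is that you build the trivialization from a local section of $\xi$ and check the principal-bundle axioms explicitly, whereas the paper invokes the local product form $V\times\ker(\xi)\to V$ of $\xi$ directly; these are equivalent.
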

\begin{proof}
    It follows that $\zeta$ as above is injective/surjective if and only if $\xi$ is. Hence $\alpha$ is injective/surjective if and only if $\xi$ is. Finally, as $\ker(\xi)$ is a closed normal subgroup of $G_1$, if $\xi$ is surjective there is the principal bundle $\ker(\xi) \to G_1 \to G_2$. Hence $\xi$ locally looks like $\pr_V \from V \times \ker(\xi) \to V$, where $V$ is a trivializing open in $G_2$. Hence $\zeta$ can be restricted to the map $\zeta'=(\id_U,\pr_V,\id_X) \from U \times (V \times \ker(\xi)) \times X  \to U \times V \times X$, which is clearly $\ker(\xi)$-invariant. As any point in $B_2$ has a neighborhood of the latter form, it follows that $\alpha$ defines a principal $\ker(\xi)$-bundle whenever it is surjective.
\end{proof}
This latter result is the remarkable observation that maps of semi-principal bundles may themselves be principal bundles.
We observe that the quotient map $Q\from B \to B/G$ in Prop.~\ref{prop:semi-prin = prin + covering} provides an example.

\subsection{Connections on a semi-principal bundle and its frame bundle} \label{sec:connections}

A remarkable property of principal bundles is that they allow for equivariant parallel transport, which can be formulated using special connection 1-forms. We will now inspect how semi-principal bundles can support a similar structure. This will turn out to be almost a verbatim copy of the theory of principal connections. In addition, a connection on a semi-principal bundle will induce a (principal) connection on its frame bundle. 

One way of writing the axioms for a 1-form $\omega$ to be a principal connection is by assuming two relations involving the $G$-action on the manifold. In other words, these axioms do not need to know more than a $G$-manifold structure. This motivates us to use the following definition, for which we use notation as follows. The action map we write as $A \from G\times F \to F$, where acting with $g$ is denoted as $L_g:=A(g,-) \from F \to F$. One can also consider a specific point $f\in F$, and restrict $A$ to $A_f:=A(-,f) \from G \to F$. The differential $a_f=(dA_f)_e \from \g \to T_fF$, where $\g$ again denotes the Lie algebra of $G$, is known as the infinitesimal action at $f$. We thus have the notation to state the following.
\begin{Definition} \label{def:G-connection}
    Let $G$ be a Lie group with algebra $\g$ and let $F$ be a (left) $G$-manifold. A $\g$-valued 1-form $\omega$ on $F$ is called a \emph{$G$-connection 1-form}, or just \emph{$G$-connection}, if
    \begin{itemize}
        \item $\omega_f\circ a_f=\id_{\g}$ for all $f \in F$
        \item $(L_g)^*\omega=\Ad_g(\omega)$ for all $g\in G$.
    \end{itemize}
\end{Definition}
This definition readily applies to semi-principal bundles, and so principal bundles, as their total spaces are by definition $G$-manifolds. However, these axioms can only hold for certain $G$-manifolds. The left-inverse axiom of $\omega_f$ implicitly states that $a_f$ will be a right-inverse, hence $a_f$ must be injective for all $f\in F$. In other words, the action has to be free on a local level. Again, we will not treat the general theory but consider the case of semi-principal bundles. We remark that we choose the name $G$-connection to avoid confusion. As Prop.~\ref{prop:semi-prin = prin + covering} indicates, a $G$-manifold can be a principal bundle for one bundle projection and a semi-principal bundle for another. The axioms of a $G$-connection 1-form do not make reference to such differences, hence we wish to avoid terminology suggesting such reference.

The case of a $G$-connection on a $G$-semi-torsor $F$ turns out to be special. In this case, $F$ admits exactly one $G$-connection. The key observation is that if the infinitesimal action is invertible, then the $G$-connection is fixed.
\begin{Proposition} \label{prop:semi-torsor G-connection}
    Any $G$-semi-torsor $F$ has a natural $G$-connection $\omega$ given by
    \begin{equation}
        \omega_f=a_f^{-1}.
    \end{equation}
    Moreover, this is the only $G$-connection on $F$.
\end{Proposition}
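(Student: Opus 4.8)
The plan is to reduce the entire statement to one structural fact: for a $G$-semi-torsor the infinitesimal action $a_f \from \g \to T_f F$ is a linear isomorphism at every $f \in F$. Once that is established, both existence and uniqueness of the connection become almost formal, and the definition $\omega_f = a_f^{-1}$ is forced.

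First I would prove the isomorphism claim. Since $F$ is free, the isotropy group of each $f$ is trivial, so $\ker a_f$, being the Lie algebra of that isotropy group, vanishes; hence $a_f$ is injective. Since $F/G$ is discrete, each orbit is open in $F$ (recall $F \cong G \times F/G$), so $T_f F$ equals the tangent space of the orbit through $f$ and therefore has dimension $\dim \g$. Injectivity together with equal dimension forces $a_f$ to be a bijection. Uniqueness is then immediate: the first axiom $\omega_f \circ a_f = \id_{\g}$ says that $\omega_f$ is a left inverse of the isomorphism $a_f$, and an isomorphism has exactly one left inverse. Thus any $G$-connection must equal $\omega_f = a_f^{-1}$, which simultaneously verifies the first axiom for the proposed form.

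It then remains to check that $\omega_f = a_f^{-1}$ is a genuine smooth $G$-connection. Smoothness follows because $f \mapsto a_f$ is smooth (the action map $A$ is smooth) and inversion of linear isomorphisms is smooth; equivalently, in a local trivialization $F \cong G \times X$ the form $\omega$ restricts on each orbit-component to the right-invariant Maurer--Cartan form of $G$, which is manifestly smooth. For the equivariance axiom the key step is the conjugation identity
\begin{equation*}
    L_g \circ A_f = A_{gf} \circ C_g,
\end{equation*}
where $C_g \from G \to G$ denotes conjugation by $g$; this holds because $(gh)f = (ghg^{-1})(gf)$. Differentiating at $e$ yields $(dL_g)_f \circ a_f = a_{gf} \circ \Ad_g$, hence $(dL_g)_f = a_{gf} \circ \Ad_g \circ a_f^{-1}$. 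Substituting this into $((L_g)^*\omega)_f = \omega_{gf} \circ (dL_g)_f = a_{gf}^{-1} \circ (dL_g)_f$ collapses to $\Ad_g \circ a_f^{-1} = \Ad_g(\omega_f)$, which is precisely $(L_g)^*\omega = \Ad_g(\omega)$.

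The main obstacle is genuinely only the first step, establishing that $a_f$ is an isomorphism: injectivity uses freeness, while surjectivity essentially uses the semi-torsor hypothesis that $F/G$ is discrete, so that the orbits are open and hence full-dimensional. Once invertibility of $a_f$ is in hand, both axioms and uniqueness reduce to the short computations above, and the smoothness is routine.
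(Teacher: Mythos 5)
Your proposal is correct and follows essentially the same route as the paper: establish that freeness gives injectivity of $a_f$ and discreteness of $F/G$ gives $\dim F=\dim G$ so that $a_f$ is a linear isomorphism, deduce uniqueness from the left-inverse axiom, and verify equivariance via the identity $dL_g\circ a_f=a_{gf}\circ \Ad_g$. The only differences are that you additionally spell out the derivation of that identity from $(gh)f=(ghg^{-1})(gf)$ and address smoothness explicitly, both of which the paper leaves implicit.
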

\begin{proof}
    As the action is free, $a_f$ is injective for all $f\in F$. As $F/G$ is discrete, $\dim(F)=\dim(G)$, hence $a_f$ is a linear isomorphism. Hence the condition $\omega_f\circ a_f=\id_\g$ uniquely specifies $\omega_f=a_f^{-1}$.
    To verify that $\omega$ is a $G$-connection, it remains to check the equivariance. This reads
    \begin{equation}
        (L_g^*\omega)_f(X)=a_{gf}^{-1}(dL_g(X))=\Ad_g(a_f^{-1}(X))=\Ad_g(\omega_f(X)),
    \end{equation}
    where we used the relation $dL_g\circ a_f=a_{gf}\circ \Ad_g$.
\end{proof}

This 1-form can be considered as an extension of (a left-action version of) the usual canonical 1-form on $G$, also known as Maurer-Cartan form. A crucial difference is that the above form $\omega$ does not need any identification of (a part of) $F$ with $G$. Moreover, in case $F$ is $G$ viewed as a (left) $G$-torsor one does obtain the usual formula. Indeed, then $A_g=R_g$ and one finds
\begin{equation}
    \omega_g=a_g^{-1}=(dA_g)_e^{-1}=(dR_g)_e^{-1}=(dR_{g^{-1}})_g
\end{equation}
which is indeed the Maurer-Cartan form in the case of a left-action.

Another perspective on this connection involves the map
\begin{equation}
    \begin{split}
        G\times F &\to F\times_q F\\
        (g,f) &\mapsto (gf,f)
    \end{split}
\end{equation}
which by choice of image is always surjective. For a free action, this map is also injective, and an inverse function exists. This is an invertible map; the dimensions of source and target space coincide, and the differential has full rank. Let us write this smooth inverse as
\begin{equation}
    \begin{split}
        F\times_q F &\to G\times F\\
        (f',f) &\mapsto ([f'/f],f)
    \end{split}
\end{equation}
where $[-/-] \from F\times_q F \to G$ is the smooth map sending $(f',f)$ to the unique element $[f'/f]\in G$ such that $f'=[f'/f]f$. That is, the division map allows one to track the group element relating points in $F$. Its derivative will thus yield an infinitesimal generator, which yields again the unique $G$-connection on $F$ in the following way.
\begin{Proposition}
    The canonical $G$-connection on a $G$-semi-torsor $F$ can alternatively be written as
    \begin{equation}
        \omega_{\gamma(0)}(\dot{\gamma}(0))=\dtzero [\gamma(t)/\gamma(0)],
    \end{equation}
    where $\gamma\from (-\epsilon,\epsilon)\to F$ is a curve in $F$.
\end{Proposition}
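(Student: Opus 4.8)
The plan is to reduce the claim to the identity $\omega_f = a_f^{-1}$ established in Prop.~\ref{prop:semi-torsor G-connection}, via a single application of the chain rule. The division map supplies an explicit curve in $G$ whose velocity at $0$ is, by inspection, the right-hand side, and the equivariant orbit map $A_f$ relates this velocity back to $\dot\gamma(0)$.

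First I would make sense of the right-hand side. Since $F/G$ is discrete, the composite $q\circ\gamma$ is a locally constant curve in $F/G$, so $\gamma(t)$ lies in the same orbit as $\gamma(0)$ for all $t$ in a neighborhood of $0$. Hence $c(t):=[\gamma(t)/\gamma(0)]$ is a well-defined curve in $G$ near $t=0$, and it is smooth because the division map $[-/-]$ is smooth. Evaluating the defining relation of the division map at $t=0$ gives $\gamma(0)=c(0)\gamma(0)$, so freeness of the action forces $c(0)=e$; in particular $\dot c(0)\in T_eG=\g$, which is consistent with $\omega$ being $\g$-valued.

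The key step is to read the defining property of $[-/-]$ as an identity of curves. Writing $f:=\gamma(0)$, the relation $\gamma(t)=c(t)\,f=A_f(c(t))$ says exactly that $\gamma=A_f\circ c$ on the neighborhood above. Differentiating at $t=0$ and using $c(0)=e$ together with $a_f=(dA_f)_e$ yields
\[
\dot\gamma(0)=(dA_f)_{c(0)}(\dot c(0))=(dA_f)_e(\dot c(0))=a_f(\dot c(0)).
\]
Because $F$ is a semi-torsor, $a_f$ is a linear isomorphism (as shown in the proof of Prop.~\ref{prop:semi-torsor G-connection}), so I may invert it; recalling $\omega_f=a_f^{-1}$ gives $\dot c(0)=a_f^{-1}(\dot\gamma(0))=\omega_f(\dot\gamma(0))$. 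Since $\dot c(0)=\dtzero[\gamma(t)/\gamma(0)]$, this is precisely the asserted formula.

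I do not expect a genuine obstacle here: once the setup is arranged, the computation is a one-line chain rule. The only points needing care are the bookkeeping that makes the right-hand side meaningful at all — namely that $\gamma(t)$ stays in the orbit of $\gamma(0)$ near $t=0$ (where discreteness of $F/G$ enters) and that $c(0)=e$ (where freeness enters) — rather than any analytic difficulty.
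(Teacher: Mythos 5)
Your proposal is correct and follows essentially the same route as the paper: both arguments rest on Prop.~\ref{prop:semi-torsor G-connection} (that $\omega_f=a_f^{-1}$ is forced) and on differentiating the defining relation $\gamma(t)=[\gamma(t)/\gamma(0)]\,\gamma(0)$ of the division map. The only difference is one of direction --- you apply the chain rule to an arbitrary curve to get $\dtzero[\gamma(t)/\gamma(0)]=a_f^{-1}(\dot\gamma(0))$ directly (which also settles well-definedness of the formula as a $1$-form), whereas the paper verifies the left-inverse axiom on the generator curves $t\mapsto\exp(tY)f$ and invokes uniqueness.
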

\begin{proof}
    As $\dim(F)=\dim(G)$, $\gamma(t)$ and $\gamma(0)$ always lie in the same orbit, hence the stated division is well-defined. It suffices to check the $G$-connection axioms, in fact the left-inverse property alone is already sufficient by Prop.~\ref{prop:semi-torsor G-connection}. Taking $f\in F$ and $Y\in \g$, this verification reads
    \begin{equation}
        \omega_f(a_f(Y))=\dtzero [\exp(tY)f/f]=\dtzero \exp(tY)=Y.
    \end{equation}
    The equivariance can also be verified using the identity $[gf'/gf]=g[f'/f]g^{-1}=C_g([f'/f])$;
    \begin{equation}
        (L_g^*\omega)_{\gamma(0)}(\dot{\gamma}(0))=\dtzero [g\gamma(t)/g\gamma(0)]=\dtzero C_g([\gamma(t)/\gamma(0)])=\Ad_g(\omega_{\gamma(0)}(\dot{\gamma}(0))).
    \end{equation}
\end{proof}
The semi-torsor assumption cannot be omitted; the points $\gamma(t)$ and $\gamma(0)$ should be in the same orbit for any path $\gamma$, hence the dimensions of $F$ and $G$ should be equal. Of course, here we still assume the action to be free, otherwise the division would be ill-defined. Hence $F$ can only consist of separated copies of the standard $G$-torsor, meaning $F$ is a $G$-semi-torsor.

Again, this notation reduces to (the left-action version of) the Maurer-Cartan form in case $F$ is the $G$-torsor $G$. This time, the key relation is $[-/g]=R_{g^{-1}}$, so for a curve $\gamma$ such that $(\gamma,\dot{\gamma})(0)=(g,X)$ it holds that
\begin{equation}
    \omega_g(X)=\dtzero [\gamma(t)/g]=\dtzero R_{g^{-1}}(\gamma(t))=dR_{g^{-1}}(X).
\end{equation}
Nevertheless, the division map does not need to employ a right translation, hence is more general. 

Let us move from semi-torsors to semi-principal bundles. A first check is that a $G$-connection on a semi-principal $G$-bundle induces a $G$-equivariant parallel transport. In other words, we wish to show that this result on principal bundles extends to all semi-principal bundles. One can prove this result in various ways. One way is to use the observation that a semi-principal bundle locally looks like the projection $U \times G \times X \to U$, which locally looks like the projection $U \times G \to U$, hence allows for a similar argument as in principal bundle theory. Alternatively, one can use the result on principal bundles and extend it using the decomposition from Prop.~\ref{prop:semi-prin = prin + covering}. Let us employ the latter method in the following proof.
\begin{Proposition}
    Let $\pi \from B \to M$ be a semi-principal $G$-bundle and $\omega$ a $G$-connection on $B$. Given any piece-wise smooth curve $\gamma \from [0,1] \to M$ and an initial point $b\in B$ above $\gamma(0)$, there is a unique horizontal lift $\Gamma\from [0,1]\to B$ through $b$, specified by $\pi\circ \Gamma=\gamma$ and $\Gamma(0)=b$.
    Moreover, the parallel transport maps are $G$-equivariant; the lift of $\gamma$ starting from $gb$ is $g\Gamma$.
\end{Proposition}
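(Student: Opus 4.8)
The plan is to exploit the decomposition of Proposition~\ref{prop:semi-prin = prin + covering}, which factors $\pi$ as $\pi=\pi/G\circ Q$, where $Q\from B\to B/G$ is a principal $G$-bundle and $\pi/G\from B/G\to M$ is a regular covering. The first observation I would make is that the axioms of Definition~\ref{def:G-connection} are phrased entirely in terms of the $G$-manifold structure of $B$ and are \emph{identical} to the axioms of a principal connection $1$-form on the principal bundle $Q\from B\to B/G$. Hence $\omega$ is a genuine principal connection for $Q$; its kernel $\ker\omega$ is a $G$-invariant distribution transverse to the $G$-orbits. This reduces the whole statement to two classical facts: unique path-lifting in covering spaces, and existence and uniqueness of horizontal lifts in principal bundles.

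For existence, I would first lift $\gamma$ through the covering. Since $\pi/G(Q(b))=\pi(b)=\gamma(0)$, the point $Q(b)$ lies over $\gamma(0)$, and because $\pi/G$ is a covering the curve $\gamma$ admits a unique piece-wise smooth lift $\bar\gamma\from[0,1]\to B/G$ with $\bar\gamma(0)=Q(b)$. I would then apply the standard principal-bundle parallel transport for $\omega$ on $Q\from B\to B/G$ to the base curve $\bar\gamma$ and the initial point $b$ (which sits over $\bar\gamma(0)=Q(b)$), obtaining a unique horizontal lift $\Gamma\from[0,1]\to B$ with $Q\circ\Gamma=\bar\gamma$ and $\Gamma(0)=b$. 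This $\Gamma$ is the desired lift: it is horizontal by construction, starts at $b$, and satisfies $\pi\circ\Gamma=\pi/G\circ Q\circ\Gamma=\pi/G\circ\bar\gamma=\gamma$.

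For uniqueness I would run the factorisation backwards. Any horizontal lift $\Gamma$ of $\gamma$ through $b$ has $Q\circ\Gamma$ a lift of $\gamma$ to $B/G$ through $Q(b)$, which must equal $\bar\gamma$ by uniqueness of covering lifts; then $\Gamma$ is forced to be the unique horizontal lift of $\bar\gamma$ through $b$ in the principal bundle $Q$. Equivariance follows immediately from the second $G$-connection axiom: for $g\Gamma=L_g\circ\Gamma$, whose velocity is $dL_g(\dot\Gamma)$, one computes $\omega_{g\Gamma}(dL_g(\dot\Gamma))=(L_g^*\omega)_\Gamma(\dot\Gamma)=\Ad_g(\omega_\Gamma(\dot\Gamma))=0$, so $g\Gamma$ is horizontal; moreover $\pi\circ(g\Gamma)=\pi\circ\Gamma=\gamma$ as $\pi$ is $G$-invariant, and $g\Gamma(0)=gb$. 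By uniqueness, $g\Gamma$ is the lift through $gb$.

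I expect no serious obstacle, precisely because the decomposition carries the analytic content. The one point I would state explicitly rather than leave implicit is the identification of $\omega$ as a principal connection on $Q$, together with the remark that horizontality of a curve in $B$ is the intrinsic condition $\omega(\dot\Gamma)=0$, which makes no reference to $M$ and is therefore compatible both with the covering lift downstairs and with the $G$-action used in the equivariance step.
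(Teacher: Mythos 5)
Your proposal is correct and follows exactly the route the paper takes: invoke Prop.~\ref{prop:semi-prin = prin + covering} to factor $\pi$ as the principal bundle $Q\from B\to B/G$ followed by the covering $\pi/G\from B/G\to M$, lift $\gamma$ through the covering, then apply standard principal-bundle parallel transport for $\omega$ on $Q$. Your write-up is in fact slightly more explicit than the paper's on the uniqueness step and on the direct verification of equivariance via $L_g^*\omega=\Ad_g(\omega)$, but the underlying argument is the same.
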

\begin{proof}
    By Prop.~\ref{prop:semi-prin = prin + covering}, the semi-principal bundle $\pi \from B \to M$ decomposes as a principal $G$-bundle $B \to B/G$ and a covering $B/G \to M$. Lifting the curve $\gamma$ from $M$ to a curve $\bar{\gamma}$ in $B/G$ with initial point $[b]=Gb$ then follows from covering theory. Now $\omega$ is a $G$-connection, hence a principal connection concerning the principal $G$-bundle $B \to B/G$. As $b$ lies above $Gb$, one can lift $\bar{\gamma}$ to a unique $\Gamma$ in $B$ such that $\Gamma(0)=b$. Hence any curve $\gamma$ in $M$ lifts to a unique curve $\Gamma$ in $B$ given an initial condition and the 1-form $\omega$. If the starting point would be $gb$, then only the lifting from $B/G$ to $B$ changes. However, as $B \to B/G$ is a principal bundle the lift becomes $g\Gamma$, as claimed.
\end{proof}

We already saw that the frame bundle $\Fr(B)$ of a semi-principal bundle $B$ allows one to describe the symmetries of the fiber of $B$ using an $G\wr X$-action. Similarly, we now want to describe holonomy of the semi-principal bundle $B$ using $G\wr X$. In search of this, we must first discuss how a $G$-connection on $B$ carries over to $\Fr(B)$. This procedure is straightforward as $\Fr(B)$ is a submanifold of $B^{\odot X}$. The latter has a canonically induced connection by stating that a curve
\begin{equation}
    \Gamma(t)=(\Gamma_x(t))_{x\in X}
\end{equation}
in $B^{\odot X}$ is horizontal if and only if the curves $\Gamma_x$ in $B$ are horizontal for all $x\in X$. Equivalently, in the 1-form formalism, the bundle $B^{\odot X}$ is naturally endowed with the $G^X$-connection 1-form $\omega^{\odot X} \in \Omega^1(B^{\odot X},\g^X)$ defined by
\begin{equation}
    (\omega^{\odot X})_{\tilde{b}}:=(\omega_{b_x})_{x\in X}=\prod_{x\in X}\omega_{b_x}.
\end{equation}
We note that this formally leads to infinite dimensional groups and algebras in case $X$ is infinite, but as seen in the curve picture it is well-defined. A connection $\omega!$ on $\Fr(B)$ is obtained by restriction of $\omega^{\odot X}$ to $\Fr(B)$, i.e.\ one takes the pull-back of $\omega^{\odot X}$ along the inclusion $i \from \Fr(B) \to B^{\odot X}$. This induced 1-form is indeed a $G\wr X$-connection. A proof of this requires knowledge of the adjoint representation of $G\wr X$, hence we inspect this first.
\begin{Lemma}
    The adjoint representation $\Ad \from G\wr X \to \GL(\g^X)$ is given by the rules
    \begin{equation}
        \begin{split}
            \Ad_{\tilde{g}}&=\prod_{x\in X} \Ad_{\tilde{g}(x)}\\
            \Ad_\sigma &=[\tilde{X} \mapsto \tilde{X}\sigma^{-1}].
        \end{split}
    \end{equation}
\end{Lemma}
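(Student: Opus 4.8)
The plan is to exploit the semidirect-product structure $G\wr X=G^X\rtimes\Sym(X)$ together with the fact that $\Ad$ is a group homomorphism. Every element factors as $(\tilde{g},\sigma)=(\tilde{g},\id)\cdot(\tilde{e},\sigma)$, where $\tilde{e}$ denotes the constant tuple with value the unit of $G$, so that $\Ad_{(\tilde{g},\sigma)}=\Ad_{(\tilde{g},\id)}\circ\Ad_{(\tilde{e},\sigma)}$. It therefore suffices to compute the adjoint action separately on the normal subgroup $G^X$ and on the complement $\Sym(X)$, which is exactly the content of the two stated rules. Before starting I would record the two structural facts that make the computation run: the Lie algebra of $G\wr X$ is $\g^X$ (as $\Sym(X)$ is discrete it contributes nothing), and since $G^X$ is an open subgroup with this Lie algebra, the exponential of $G\wr X$ restricted to $\g^X$ is the component-wise exponential into $G^X$. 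Thus a one-parameter subgroup tangent to $\tilde{X}\in\g^X$ has the form $t\mapsto\big((\exp(t\tilde{X}(x)))_{x\in X},\id\big)$.

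For the factor $\tilde{g}\in G^X$ the argument is immediate: the wreath multiplication restricts on $G^X$ to the ordinary direct-product multiplication, so conjugating $(\tilde{h},\id)$ by $(\tilde{g},\id)$ gives the pointwise conjugate $\big((\tilde{g}(x)\tilde{h}(x)\tilde{g}(x)^{-1})_{x\in X},\id\big)$. Differentiating at the identity yields the componentwise rule $\Ad_{\tilde{g}}=\prod_{x\in X}\Ad_{\tilde{g}(x)}$ on $\g^X$, with no further work.

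The substantive computation is the factor $\sigma\in\Sym(X)$, which I expect to be the main obstacle, since it forces careful bookkeeping of the twisted multiplication and of the composition convention $\tilde{g}\sigma^{-1}=\tilde{g}\circ\sigma^{-1}$. First I would read off the inverse $(\tilde{e},\sigma)^{-1}=(\tilde{e},\sigma^{-1})$ directly from the multiplication rule, and then evaluate $(\tilde{e},\sigma)(\tilde{h},\id)(\tilde{e},\sigma^{-1})$. A short calculation collapses this to $(\tilde{h}\circ\sigma^{-1},\id)$, so that conjugation by $\sigma$ simply permutes the slots of the tuple by $\sigma^{-1}$. Substituting $\tilde{h}=(\exp(t\tilde{X}(x)))_{x\in X}$ and differentiating at $t=0$ then gives $\Ad_\sigma(\tilde{X})=\tilde{X}\circ\sigma^{-1}$, which is precisely the claimed rule $\Ad_\sigma=[\tilde{X}\mapsto\tilde{X}\sigma^{-1}]$. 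Finally I would note that for infinite $X$ all formulas are to be read component-wise, consistent with the convention adopted earlier in the paper, so that no genuine infinite-dimensional analysis is required.
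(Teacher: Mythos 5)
Your proposal is correct and follows essentially the same route as the paper: factor $(\tilde{g},\sigma)=(\tilde{g},\id)(\tilde{e},\sigma)$, compute the two conjugation maps (pointwise conjugation for the $G^X$ part, slot permutation $\tilde{h}\mapsto\tilde{h}\sigma^{-1}$ for the $\Sym(X)$ part), and differentiate at the identity. The only difference is that you spell out the exponential/one-parameter-subgroup bookkeeping that the paper leaves implicit.
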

\begin{proof}
    As $(\tilde{g},\sigma)=(\tilde{g},\id_X)(\tilde{e},\sigma)$, it suffices to compute the two listed maps.
    The relevant conjugation maps are
    \begin{equation}
        \begin{split}
            C_{\tilde{g}}(\tilde{h},\tau)&=(C_{\tilde{g}}(\tilde{h}),\tau)=((C_{\tilde{g}(x)}(\tilde{h}(x))_{x\in X},\tau)\\
            C_\sigma(\tilde{h},\tau)&=(\tilde{h}\sigma^{-1},C_\sigma(\tau)).
        \end{split}
    \end{equation}
    The first one recalls that the $\Ad$ of $G^X$ is $X$ copies of the $\Ad$ of $G$. The second immediately shows that derivatives follow the same permutation as the tuple.
\end{proof}

\begin{Proposition} \label{prop:omega! principal connection}
    Let $B$ be a semi-principal $G$-bundle with $G$-connection $\omega$. The 1-form
    \begin{equation}
        \omega!:=i^*\omega^{\odot X} \in \Omega^1(\Fr(B),\g^X)
    \end{equation}
    is a $G\wr X$-connection on $\Fr(B)$. In particular, $\omega!$ is a principal connection.
\end{Proposition}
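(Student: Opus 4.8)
The plan is to verify the two axioms of Definition~\ref{def:G-connection} directly, treating the connected part $G^X$ and the discrete permutation part $\Sym(X)$ of $G\wr X$ separately. The starting point is that $\Fr(B)$ is an open, $G\wr X$-invariant subbundle of $B^{\odot X}$ (Thm.~\ref{thm:FrG(B) PFB}), so $i$ is an open embedding and $\omega!=i^*\omega^{\odot X}$ merely restricts $\omega^{\odot X}$; in particular every tangent vector coming from the infinitesimal $G\wr X$-action already lies in $T_{\tilde{b}}\Fr(B)=T_{\tilde{b}}B^{\odot X}$, so pulling back along $i$ changes nothing in the pointwise computations below.

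First I would handle the left-inverse axiom. Since $\Sym(X)$ is discrete, the Lie algebra of $G\wr X$ is $\g^X$ and its infinitesimal action at a frame $\tilde{b}=(b_x)_{x\in X}$ is the pointwise action $a_{\tilde{b}}(\tilde{Y})=(a_{b_x}(Y_x))_{x\in X}$. Applying $(\omega^{\odot X})_{\tilde{b}}=\prod_{x}\omega_{b_x}$ and using the left-inverse axiom $\omega_{b_x}\circ a_{b_x}=\id_\g$ for $\omega$ componentwise gives $(\omega!)_{\tilde{b}}\circ a_{\tilde{b}}=\id_{\g^X}$. This is precisely the content of the statement, recorded just before the proposition, that $\omega^{\odot X}$ is a $G^X$-connection.

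Next I would verify equivariance, writing $(\tilde{g},\sigma)=(\tilde{g},\id)(\tilde{e},\sigma)$ and checking the two generators. For $\tilde{g}\in G^X$ the action is pointwise, $L_{\tilde{g}}=\prod_x L_{\tilde{g}(x)}$, so $(L_{\tilde{g}})^*\omega^{\odot X}=\prod_x(L_{\tilde{g}(x)})^*\omega=\prod_x\Ad_{\tilde{g}(x)}(\omega)=\Ad_{\tilde{g}}(\omega^{\odot X})$ by the equivariance of $\omega$ and the rule $\Ad_{\tilde{g}}=\prod_x\Ad_{\tilde{g}(x)}$ from the preceding lemma. For $\sigma\in\Sym(X)$ the action $L_\sigma$ only permutes the factors, $(L_\sigma\tilde{b})_x=b_{\sigma^{-1}(x)}$, so the $x$-component of $(L_\sigma)^*\omega^{\odot X}$ evaluated on $\tilde{v}=(v_x)_{x\in X}$ is $\omega_{b_{\sigma^{-1}(x)}}(v_{\sigma^{-1}(x)})$, which is exactly the $\sigma^{-1}(x)$-component of $\omega^{\odot X}(\tilde{v})$; this matches $\Ad_\sigma=[\tilde{X}\mapsto\tilde{X}\sigma^{-1}]$ from the same lemma. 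Combining the two generators yields $(L_{(\tilde{g},\sigma)})^*\omega!=\Ad_{(\tilde{g},\sigma)}(\omega!)$.

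The main obstacle is the bookkeeping in the permutation step: one must confirm that pulling back $\omega^{\odot X}$ along the factor-permuting action $L_\sigma$ reshuffles the $\g^X$-value in exactly the manner prescribed by $\Ad_\sigma$, rather than by $\sigma$ or an inverse of it, and this is where I would be most careful about index conventions. Once both generators are checked, $\omega!$ is a $G\wr X$-connection by Definition~\ref{def:G-connection}; and because $\Fr(B)$ is a principal $G\wr X$-bundle (Thm.~\ref{thm:FrG(B) PFB}), a $G\wr X$-connection on it is by definition a principal connection, which gives the final clause.
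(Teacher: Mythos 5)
Your proposal is correct and follows essentially the same route as the paper's proof: verify the left-inverse axiom componentwise, then check equivariance separately for the $G^X$ and $\Sym(X)$ generators using the adjoint-representation lemma. Your permutation bookkeeping (the $x$-component of the pullback being the $\sigma^{-1}(x)$-component of $\omega^{\odot X}$, matching $\Ad_\sigma$) is the correct and slightly more explicit version of the paper's one-line computation.
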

\begin{proof}
    To start, $\omega!$ indeed takes values in the algebra $\g^X$, which is the algebra of $G\wr X$. The condition that $\omega!$ is a left-inverse for the infinitesimal action on $\Fr(B)$ is satisfied as it is so per copy, i.e.\ for each $x\in X$ independently. To check equivariance of $\omega$, let us consider the $G^X$- and $\Sym(X)$-actions separately.
    The $G^X$-action can again be inspected per $x\in X$, and reduces to the equivariance of $\omega$. Concerning the permutations, fix $\sigma \in \Sym(X)$. Then $(L_\sigma)^*\omega!=\prod_{x\in X}\omega_{\sigma^{-1}(x)}=\Ad_\sigma(\omega!)$. Hence $\omega!$ is a $G\wr X$-connection.
\end{proof}

This statement is a hint that the frame bundle functor extends to bundles with connection, which indeed holds according to the upcoming theorem. The intuition is that the connection information on $B$ and $\Fr(B)$ is in essence the same; the latter consists of copies of the first, compare the equivalence in Prop.~\ref{prop:cat eq sprin}. However, the connection $\omega!$ has the convenient property that its holonomy can be written more explicitly, namely using elements of $G\wr X$.

\begin{Theorem}
    Let $\pi_i \from B_i \to M$ be a semi-principal $G_i$-bundle with $G_i$-connection $\omega_i$ for $i=1,2$. Let $\alpha \from B_1 \to B_2$ be a $\xi$-equivariant morphism compatible with the connections, i.e.\ $\alpha^*\omega_2=\xi_*\omega_1$. Then $\alpha!$ is compatible with the frame bundle connections $\omega_1!$ and $\omega_2!$. That is, the frame bundle functor extends to semi-principal bundles with connection.
\end{Theorem}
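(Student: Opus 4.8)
The plan is to lift the compatibility from the frame bundles to the ambient product bundles $B_i^{\odot X}$, where the connections $\omega_i^{\odot X}$ are by construction just products of copies of $\omega_i$, and then to restrict back along the inclusions. Recall from Prop.~\ref{prop:omega! principal connection} that $\omega_i!=\iota_i^*\omega_i^{\odot X}$, where $\iota_i\from \Fr(B_i)\to B_i^{\odot X}$ is the inclusion, and that $\alpha!$ is by definition the restriction of the product map $\alpha^{\odot X}\from B_1^{\odot X}\to B_2^{\odot X}$, $\tilde{b}\mapsto \alpha\circ\tilde{b}$. These fit into the commuting square $\alpha^{\odot X}\circ\iota_1=\iota_2\circ\alpha!$, which is the key structural identity I would exploit. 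Note also that $(\xi!)_*=\xi_*^X$ acts by $\xi_*$ in each slot, since the identity component of $G\wr X$ is $G^X$ with algebra $\g^X$ and $\xi!$ restricts to $\xi^X$ there; the discrete permutation part $\id_{\Sym(X)}$ contributes nothing to the Lie-algebra map.

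First I would establish the compatibility at the level of the product bundles, namely
\begin{equation}
    (\alpha^{\odot X})^*\omega_2^{\odot X}=(\xi!)_*\,\omega_1^{\odot X}.
\end{equation}
Since $(\omega_i^{\odot X})_{\tilde{b}}=\prod_{x\in X}\omega_{i,b_x}$, the differential $d\alpha^{\odot X}$ acts by $d\alpha$ in each slot $x$, and $(\xi!)_*$ acts by $\xi_*$ in each slot, this identity reduces to a slotwise statement. In each slot $x$ it reads $(\alpha^*\omega_2)_{b_x}=(\xi_*\omega_1)_{b_x}$, which is exactly the hypothesis $\alpha^*\omega_2=\xi_*\omega_1$; taking the product over $X$ yields the displayed equation. (When $X$ is infinite this is read component-wise, exactly as for the definition of $\omega_i^{\odot X}$ itself.)

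Then I would pull back along $\iota_1$ and use the commuting square to descend to the frames:
\begin{equation}
    (\alpha!)^*\omega_2!=(\alpha!)^*\iota_2^*\omega_2^{\odot X}=\iota_1^*(\alpha^{\odot X})^*\omega_2^{\odot X}=\iota_1^*(\xi!)_*\omega_1^{\odot X}=(\xi!)_*\iota_1^*\omega_1^{\odot X}=(\xi!)_*\,\omega_1!,
\end{equation}
where the second equality uses $\alpha^{\odot X}\circ\iota_1=\iota_2\circ\alpha!$ and the fourth uses that pullback of a form commutes with post-composition by the fixed linear coefficient map $(\xi!)_*\from \g_1^X\to \g_2^X$. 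This is precisely the stated compatibility $(\alpha!)^*\omega_2!=(\xi!)_*\omega_1!$, so $\alpha!$ is a morphism of principal bundles with connection, and the frame functor extends to bundles with connection.

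I do not expect a genuine obstacle here: the whole construction is engineered so that everything is diagonal in $X$, so the argument is essentially bookkeeping on top of the per-slot hypothesis. The only points requiring a little care are the identification of $(\xi!)_*$ with the slotwise map $\xi_*^X$ and the commutation of pullback with this linear coefficient map; for infinite $X$ one should additionally confirm that reading all identities component-wise is legitimate, which is guaranteed by the curve characterisation of horizontality used to define $\omega_i^{\odot X}$.
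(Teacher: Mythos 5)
Your proof is correct and follows essentially the same route as the paper's: both verify the identity slot-wise on the product bundles $B_i^{\odot X}$ via the hypothesis $\alpha^*\omega_2=\xi_*\omega_1$, and then descend to the frame bundles using the commuting square $\alpha^{\odot X}\circ\iota_1=\iota_2\circ\alpha!$ together with $\omega_i!=\iota_i^*\omega_i^{\odot X}$. The only difference is presentational: the paper compresses the whole argument into a single chain of pullback equalities, whereas you isolate the product-level compatibility as an intermediate step.
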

\begin{proof}
    The compatibility of $\alpha!$ can be verified entry-wise. Writing $i_k\from \Fr(B_k) \to B_k^{\odot X}$, $k=1,2$, for the inclusions, and using $i_2\circ \alpha!=\alpha^{\odot X}\circ i_1$;
    \begin{equation}
        \begin{split}
            (\alpha!)^*\omega_2!&=(\alpha!)^*i_2^* \omega_2^{\odot X}=i_1^*(\alpha^{\odot X})^* \omega_2^{\odot X}=i_1^*(\alpha^*\omega_2)^{\odot X}\\
            &=i_1^*(\xi_*\omega_1)^{\odot X}=i_1^*(\xi^X)_*\omega_1^{\odot X}=(\xi!)_*\omega_1!
        \end{split}
    \end{equation}
    and so the claim on the frame bundle functor follows.
\end{proof}

We end this section with the remark that a morphism of semi-principal bundles can be used to transfer a connection from one bundle to another. We observe that this statement will also place one in the context of the previous theorem.
\begin{Proposition}
    Let $\pi_i \from B_i \to M$ be semi-principal $G_i$-bundles for $i=1,2$ with equal index set $X$. Let $\alpha \from B_1 \to B_2$ be a $\xi$-equivariant morphism. Then any $G_1$-connection $\omega_1$ on $B_1$ induces a unique $G_2$-connection $\omega_2$ on $B_2$ satisfying
    \begin{equation}
        \alpha^*\omega_2=\xi_*\omega_1.
    \end{equation}
\end{Proposition}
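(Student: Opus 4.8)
The plan is to construct $\omega_2$ by prescribing its kernel. Since each fibre of $B_2$ is a $G_2$-semi-torsor, the infinitesimal action $a^{(2)}_{b}\from\g_2\to T_bB_2$ is a linear isomorphism onto the vertical subspace $V_{2,b}$ (cf.\ Prop.~\ref{prop:semi-torsor G-connection}), so giving a $G_2$-connection $\omega_2$ is the same as giving a $G_2$-invariant horizontal distribution $H_2=\ker\omega_2$ complementary to $V_2$; on $V_2$ the left-inverse axiom forces $\omega_2=(a^{(2)})^{-1}$. I would first split the desired identity $\alpha^*\omega_2=\xi_*\omega_1$ into its vertical and horizontal content. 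Differentiating $\alpha(gb)=\xi(g)\alpha(b)$ at $g=e$ yields $d\alpha\circ a^{(1)}_b=a^{(2)}_{\alpha(b)}\circ\xi_*$, so on vertical vectors the relation holds automatically: $\omega_2(a^{(2)}(\xi_*Y))=\xi_*Y=\xi_*(\omega_1(a^{(1)}(Y)))$. The only genuine requirement is therefore that $d\alpha$ maps $H_1:=\ker\omega_1$ into $H_2:=\ker\omega_2$.

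This dictates the construction of $H_2$. I would check that $d\alpha(H_{1,b})$ is \emph{already} a horizontal complement at $\alpha(b)$: because $\pi_2\circ\alpha=\pi_1$ and $d\pi_1$ restricts to an isomorphism $H_{1,b}\to T_{\pi_1(b)}M$, the image $d\alpha(H_{1,b})$ projects isomorphically onto $T_{\pi_1(b)}M$, hence has dimension $\dim M$ and is transverse to $V_{2,\alpha(b)}$; the count $\dim B_2=\dim M+\dim G_2$ then makes it a genuine complement. I would set $H_{2,\alpha(b)}:=d\alpha(H_{1,b})$ and propagate it over the $G_2$-orbit by $H_{2,g\alpha(b)}:=dL_g\bigl(H_{2,\alpha(b)}\bigr)$. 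Since $\alpha$ is a morphism of $\SPrin(M)$, the induced $\alpha/$ is a fibre-wise bijection on orbits, so every $G_2$-orbit in $B_2$ meets $\operatorname{im}(\alpha)$; hence this prescription reaches every point of $B_2$.

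The hard part will be well-definedness and smoothness of $H_2$, and this is exactly where the hypotheses are used. Two representatives may name the same point: if $\alpha(b)=\alpha(b')$ then, by the local form $\zeta=(\id_U,\xi,\id_X)$ of Lemma~\ref{lem:local form map SPrin(M)}, one has $b'=kb$ with $k\in\ker\xi$, and the $G_1$-invariance of $H_1$ together with $\xi(k)=e$ gives $d\alpha(H_{1,b'})=d\alpha(dL_k\,H_{1,b})=dL_{\xi(k)}\,d\alpha(H_{1,b})=d\alpha(H_{1,b})$. If instead $g\alpha(b)$ and $g'\alpha(b')$ coincide, I would use freeness of the $G_2$-action together with $\xi(h)\alpha(b)=\alpha(hb)$ to reduce to the previous case; freeness also makes the $G_2$-translation prescription unambiguous along a fixed orbit. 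Smoothness I would verify in a trivialisation $U\times G_2\times X$: over the slice $\xi(G_1)$ the distribution is $d\zeta$ of the smooth distribution $H_1$, and left translation spreads it smoothly over each $G_2$-factor.

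Finally I would confirm the axioms and uniqueness. By construction $H_2$ is $G_2$-invariant and complementary to $V_2$, so $\omega_2:=(a^{(2)})^{-1}$ on $V_2$ and $0$ on $H_2$ is a $G_2$-connection, the equivariance $(L_g)^*\omega_2=\Ad_g\omega_2$ following from invariance of $H_2$ and the identity $dL_g\circ a^{(2)}=a^{(2)}\circ\Ad_g$ used in Prop.~\ref{prop:semi-torsor G-connection}; and $d\alpha(H_1)\subseteq H_2$ delivers $\alpha^*\omega_2=\xi_*\omega_1$. For uniqueness, any solution $\omega_2'$ must satisfy $\ker\omega_2'\supseteq d\alpha(H_{1,b})$ at each $\alpha(b)$ by the horizontal part of compatibility; as both are $\dim M$-dimensional complements to $V_2$ they coincide there, $G_2$-invariance propagates the equality of kernels over all of $B_2$, and the vertical values are forced, so $\omega_2'=\omega_2$.
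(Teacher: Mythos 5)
Your proof is correct, and it is more than the paper offers: the paper disposes of this proposition in one line, observing that semi-principal bundles and their morphisms locally look like principal bundles and their morphisms (cf.\ Lemma~\ref{lem:local form map SPrin(M)}) and then citing the classical result for principal bundles (Kobayashi--Nomizu, Prop.~II.6.1). What you have done is essentially to carry out, directly in the semi-principal setting, the horizontal-distribution argument underlying that classical result: push $H_1=\ker\omega_1$ forward along $d\alpha$, spread it over $B_2$ by the $G_2$-action, and check well-definedness, smoothness, the connection axioms, and uniqueness. The two routes are the same at bottom, but yours has the merit of making explicit exactly where the standing hypotheses enter: the semi-torsor condition gives $\dim F_2=\dim G_2$, so the infinitesimal action $a^{(2)}$ is an isomorphism onto the vertical space and the dimension count for the complement closes; the requirement that $\alpha$ be a morphism of $\SPrin(M)$, i.e.\ that $\alpha/$ be an isomorphism, is what guarantees that every $G_2$-orbit meets $\operatorname{im}(\alpha)$, so the translated distribution reaches all of $B_2$; and freeness of the $G_2$-action resolves the ambiguity between representatives. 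One small simplification: in your first well-definedness case you need not invoke the local form lemma --- if $\alpha(b)=\alpha(b')$, then injectivity of $\alpha/$ on orbits already gives $b'=kb$ for some $k\in G_1$, and freeness of the $G_2$-action applied to $\xi(k)\alpha(b)=\alpha(b)$ yields $\xi(k)=e$ directly.
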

\begin{proof}
    As semi-principal bundles locally look like principal bundles, similarly for the maps between them, one can use the same argument as for principal bundles, e.g.\ \cite[Prop.~II.6.1]{Kobayashi1963FoundationsGeometry}.
\end{proof}

A trivial example is the quotient map $Q\from B \to B/G$; any connection on $B$ gets reduced to the lifting of a curve through a covering map. Another example can be phrased on the winding torus as follows.

\begin{Example} \label{ex:winding torus 5}
    Let us consider the semi-principal bundle $\Pi_k \from T^2 \to S^1$ from Ex.~\ref{ex:winding torus 1}. Define the $U(1)$-connection $\omega_1$ on $T^2$ by declaring the horizontal subspaces to be the lines orthogonal to the vertical bundle, where the metric is obtained via the usual embedding in $\R^3$. Observe that $\omega_1$ only needs to be compatible with the group structure; the projection need not be taken into account. Indeed, the usual embedding of $T^2$ in $\R^3$ corresponds to $\Pi_1$, but the obtained $U(1)$-connection is compatible with any $\Pi_k$.
    
    Consider the map $\alpha \from T^2 \to T^2$ which winds the torus twice along the fibers, which is equivariant for the group map $\xi \from U(1) \to U(1)$ taking the square. This is a bundle self-map with respect to $\Pi_k$, for any $k$. The new connection $\omega_2$ will again be a $U(1)$-connection on $T^2$. In fact, $\omega_2=\omega_1$ as the new horizontal subspaces coincide with the old ones (they are doubly covered by $\alpha_*$). In terms of the formula, we observe that both $\alpha^*$ and $\xi_*$ introduce a scaling by 2, hence cancel.
\end{Example}

\section{Discussion} \label{sec:discussion}

In this paper we explored fiber bundles where the model fiber was a group or group-space. We found these to be significantly different. In addition, we found that principal bundles form a special class of group-space bundles, namely those whose fiber is a torsor.
We then shifted our attention to a intermediate class of group-space bundles, namely those whose fiber is free and has discrete quotient, i.e.\ a semi-torsor. To study these semi-torsors, we defined the notion of a basis of a $G$-set $F$, in analogy with the bases of a vector space. The symmetry group of the space $\Fr(F)$ of bases of $F$ is the wreath product $G\wr X$, with $X=F/G$ the orbit space. The map sending a $G$-semi-torsor $F$ to the $G\wr X$-torsor $\Fr(F)$ is of functorial nature.

This theory was lifted to fiber bundles. The bundles of semi-torsors we called semi-principal bundles, and we found that every semi-principal bundle has a frame bundle, which is the principal $G\wr X$-bundle of frames of the fibers. However, this association is a functor only when restricting the maps of semi-principal bundles to the orbit-preserving ones. The category so obtained has the principal bundles as full subcategory, and taking the frame bundle defines a retracting functor from this category to the principal bundles. Moreover, semi-principals bundles support parallel transport just like principal bundles, and this carries over to the frame bundle in a straightforward way.

As stated in the introduction, our main motivation to study semi-torsors and semi-principal bundles comes from the field of adiabatic quantum mechanics. For the exact way in which these structures arise there, we refer to \cite{Pap2021AMechanics}. In short, one obtains a semi-torsor rather than a torsor as eigenrays can be well-separated for an individual Hamiltonian, yet can be connected by variation of the Hamiltonian. More geometrically, locally eigenrays are distinct objects, yet globally they are part of one and the same structure. This topological property allows for interchanges of eigenstates by adiabatic evolution. The frame bundle functor then recasts this theory into the more familiar setting of principal bundles. In addition to this, we observe a resemblance of the frame bundle theory with the model in \cite{Barkeshli2010U1States}. This studies the fractional quantum Hall effect, in particular electrons that are in a bilayer phase. A $U(1)$-symmetry appears for both layers separately, and in addition there is a $\Z_2$-symmetry describing an exchange. The total group that appears is then $(U(1)\times U(1))\rtimes \Z_2$, i.e.\ $U(1)\wr I_2$ in the notation of this paper, and the gauge fields and their transformations follow the properties of $\omega!$ found here.

\section*{Acknowledgements}

The authors wish to thank Professor Dani{\"e}l Boer for fruitful discussions, and two anonymous referees whose careful remarks helped to improve the quality the paper.

\appendix

\section{Automorphism group of a group-set}
\label{sec:Aut of G-set}

In this appendix we show that the wreath product $G\wr X$ and the automorphism group $\Aut(G\times X)$ of the model free $G$-set $G\times X$ are isomorphic. We first show that $\Aut(F)$ fits in a special short exact sequence (SES), cf the semi-direct product structure of the wreath product. Afterwards, we treat an explicit isomorphism, which can be derived using an argument similar to that of dual spaces.

In both parts, we will use the following notation. Observe that any two bases $\tilde{f},\tilde{f}' \in \Fr(F)$ are related by a unique element $[\tilde{f}'/\tilde{f}]\in \Aut(F)$ specified by the equality
\begin{equation} \label{eq:def [tilde f'/ filde f]}
    \tilde{f'}=[\tilde{f}'/\tilde{f}]\tilde{f}
\end{equation}
of maps $X\to F$. This yields $[\tilde{f}'/\tilde{f}]=\phi_{\tilde{f}'}\circ \phi_{\tilde{f}}^{-1}$, which is a composite of $G$-isomorphisms, hence $[\tilde{f}'/\tilde{f}]$ lies indeed in $\Aut(F)$. This notation is compatible with the more general division in the main text; the division here corresponds to viewing $\Fr(F)$ as an $\Aut(F)$-torsor.

\subsection{Short exact sequence using quotient map}

To start, let us examine the group $\Aut(F)$ for $F$ a free $G$-set. A first observation is that an element of $\Aut(F)$ must induce a bijection on the orbit space $F/G$. That is, the following holds.
\begin{Lemma}
    For any map $\psi\in \Aut(F)$, there is a unique map $C_q(\psi)$ such that the diagram
    \begin{equation}
        \begin{tikzcd}
            F \ar{r}{\psi} \ar{d}{q} & F \ar{d}{q}\\
            F/G \ar{r}{C_q(\psi)} & F/G
        \end{tikzcd}
    \end{equation}
    commutes. This association defines a map of groups
    \begin{equation}
        \begin{split}
            C_q \from \Aut(F) &\to \Sym(F/G)\\
            \psi &\mapsto C_q(\psi).
        \end{split}
    \end{equation}
    which admits sections.
\end{Lemma}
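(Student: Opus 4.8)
The plan is to treat the three assertions in order, using throughout that an element of $\Aut(F)$ is a bijective $G$-map and that $F$ is free, so the basis machinery of Sec.~\ref{sec:bases of group-sets} is available. First, for existence and uniqueness of $C_q(\psi)$: since $\psi$ is equivariant it carries each $G$-orbit into a $G$-orbit, so $q\circ\psi$ is constant on the fibers of $q$. As $q$ is surjective, $q\circ\psi$ factors uniquely through $q$, and the resulting self-map of $F/G$ is by definition $C_q(\psi)$. Bijectivity of $C_q(\psi)$ I would not prove by hand but defer to the homomorphism property below: applied to $\psi\circ\psi^{-1}=\id_F$ it gives $C_q(\psi)\circ C_q(\psi^{-1})=\id_{F/G}$, and symmetrically, so $C_q(\psi)\in\Sym(F/G)$.

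For the homomorphism property $C_q(\psi_1\circ\psi_2)=C_q(\psi_1)\circ C_q(\psi_2)$, I would stack the two defining commutative squares vertically. This yields $q\circ(\psi_1\circ\psi_2)=C_q(\psi_1)\circ C_q(\psi_2)\circ q$, and the uniqueness from the first step forces $C_q(\psi_1\circ\psi_2)=C_q(\psi_1)\circ C_q(\psi_2)$. Since $C_q(\id_F)=\id_{F/G}$ is immediate, $C_q$ is a group homomorphism $\Aut(F)\to\Sym(F/G)$.

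Finally, for the sections, I would fix a basis $\tilde{f}\in\Fr(F)$, taken to be a genuine section of $q$ so that $q\circ\tilde{f}=\id$ after the identification $X=F/G$. For $\sigma\in\Sym(F/G)$ the tuple $\tilde{f}\circ\sigma$ is again a basis, because its reduction $q\circ\tilde{f}\circ\sigma=\sigma$ is bijective, by the criterion of Lemma~\ref{lem:G-basis iff generalized section}. I can therefore set $s(\sigma):=[\tilde{f}\circ\sigma/\tilde{f}]\in\Aut(F)$, the unique automorphism with $s(\sigma)\circ\tilde{f}=\tilde{f}\circ\sigma$. Evaluating the defining square of $C_q$ on $\tilde{f}$ gives $q\circ s(\sigma)\circ\tilde{f}=C_q(s(\sigma))\circ q\circ\tilde{f}$; since $q\circ\tilde{f}=\id$ and $s(\sigma)\circ\tilde{f}=\tilde{f}\circ\sigma$, the left side is $q\circ\tilde{f}\circ\sigma=\sigma$ and the right side is $C_q(s(\sigma))$, so $C_q(s(\sigma))=\sigma$ and $s$ is a right inverse of $C_q$. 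To see $s$ is itself a homomorphism, I would note that $s(\sigma_1)\circ s(\sigma_2)$ and $s(\sigma_1\sigma_2)$ both send $\tilde{f}$ to $\tilde{f}\circ(\sigma_1\sigma_2)$, hence agree on a basis and coincide by Lemma~\ref{lem:G-map properties}(2). Thus $s$ is a group-theoretic section, and different choices of $\tilde{f}$ produce different sections, accounting for the plurality.

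The step I expect to be the main obstacle is not any single computation but the bookkeeping of variance in the section: had I instead set $s(\sigma)=[\tilde{f}\circ\sigma^{-1}/\tilde{f}]$ the induced permutation would be $\sigma^{-1}$, and the resulting assignment would fail to be a homomorphism. Pinning down the convention so that $s$ genuinely splits the sequence (rather than being merely a set-theoretic section) is the delicate point, and it is exactly what underlies the semidirect-product structure advertised in the appendix's introduction.
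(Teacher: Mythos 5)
Your proposal is correct and follows essentially the same route as the paper: factor $q\circ\psi$ through $q$ by equivariance, get the homomorphism property by stacking squares, deduce bijectivity from $C_q(\id_F)=\id_{F/G}$, and build the section from a basis $\tilde{f}$ that is a section of $q$ via $\sigma\mapsto[\tilde{f}\circ\sigma/\tilde{f}]$ with the same computation $C_q(s(\sigma))=q\tilde{f}\sigma=\sigma$. Your explicit verification that $s$ is a group homomorphism (via agreement on the basis $\tilde{f}$ and Lemma~\ref{lem:G-map properties}) fills in a step the paper only asserts.
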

\begin{proof}
    The map $C_q(\psi)$ is well-defined as $q\circ \psi$ is $G$-invariant and so constant on the fibers of $q$. If also $\psi'\in \Aut(F)$, then by combining two of the above squares one can deduce $C_q(\psi'\psi) = C_q(\psi')C_q(\psi)$, i.e.\ $C_q$ is a homomorphism. As $C_q(\id_F)=\id_{F/G}$, it follows that all images of $C_q$ are invertible, hence $C_q$ has image in $\Sym(F/G)$.
    To show that $C_q$ admits sections, let us argue using a section $\tilde{f}$ of $q$. Then for each $\sigma \in \Sym(F/G)$ one has a map
    \begin{equation}
        \psi_{\sigma}:=[\tilde{f}\sigma / \tilde{f}] = \phi_{\tilde{f}} (\id_G,\sigma)\phi_{\tilde{f}}^{-1} \from hf_x \mapsto hf_{\sigma(x)}.
    \end{equation}
    This map is basis dependent, but nevertheless invertible and $G$-equivariant, hence an element of $\Aut(F)$. If one only varies the permutation $\sigma$, one obtains a map of groups
    \begin{equation}
        s_{\tilde{f}} \from \Sym(F/G) \to \Aut(F), \quad \sigma \mapsto \psi_{\sigma}.
    \end{equation}
    To check that this is a section of $C_q$, observe that $C_q(\psi)=q\psi\tilde{f}$ as $\tilde{f}$ is a section of $q$. A computation then shows
    \begin{equation}
        C_q(s_{\tilde{f}}(\sigma))=q\psi_{\sigma}\tilde{f}=q[\tilde{f}\sigma / \tilde{f}]\tilde{f}=q\tilde{f}\sigma=\sigma.
    \end{equation}
\end{proof}

As $C_q$ admits sections it must be surjective, hence let us consider its kernel. This is readily seen to be the subgroup
\begin{equation}
    \Aut(q):=\set{\psi\in \Aut(F)}{q\circ\psi=q}
\end{equation}
i.e.\ the group of orbit-preserving automorphisms. In other words, it is the group of deck transformations of $q$, hence the notation $\Aut(q)$. One thus arrives at the following SES, which can be considered a corollary of the previous lemma. The used notation highlights that this sequence is obtained by considering the automorphisms of the objects in ``$q \from F \to F/G$".
\begin{Corollary} \label{cor:SES and decomp of Aut(F)}
    Let $F$ be a free $G$-set. The group $\Aut(F)$ fits in the short exact sequence
    \begin{equation}
        \begin{tikzcd}
            1 \ar{r} & \Aut(q) \ar{r} & \Aut(F) \ar{r}{C_q} & \Sym(F/G) \ar{r} & 1
        \end{tikzcd}
    \end{equation}
    which is right-split.
\end{Corollary}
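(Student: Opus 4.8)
The plan is to assemble the short exact sequence directly from the preceding Lemma, since every ingredient has already been produced there. Recall that the sequence $1 \to \Aut(q) \to \Aut(F) \xrightarrow{C_q} \Sym(F/G) \to 1$ is exact precisely when the left map is injective, the image of that left map coincides with $\ker(C_q)$, and $C_q$ is surjective; right-splitting then amounts to exhibiting a group-homomorphism section of $C_q$. I would check each of these in turn.

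First I would observe that the map $\Aut(q) \to \Aut(F)$ is simply the inclusion of a subgroup, hence injective, giving exactness at $\Aut(q)$. For exactness at the middle term I would verify the identification $\ker(C_q) = \Aut(q)$. Using the defining commutative square $q \circ \psi = C_q(\psi) \circ q$ from the Lemma, one sees that $C_q(\psi) = \id_{F/G}$ forces $q \circ \psi = q$, so $\psi \in \Aut(q)$; conversely, if $q \circ \psi = q$ then $C_q(\psi) \circ q = q$, and since $q$ is surjective this right-cancels to give $C_q(\psi) = \id_{F/G}$. The one point deserving a moment of care is this cancellation, which relies only on surjectivity of the quotient map $q$.

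Surjectivity of $C_q$ is immediate from the Lemma: for any section $\tilde{f}$ of $q$ the homomorphism $s_{\tilde{f}} \from \Sym(F/G) \to \Aut(F)$ satisfies $C_q \circ s_{\tilde{f}} = \id_{\Sym(F/G)}$, so every permutation lies in the image of $C_q$. This same section furnishes the right-splitting, since $s_{\tilde{f}}$ is already a homomorphism of groups and a one-sided inverse of $C_q$; this is exactly the data asserted by the claim that the sequence is right-split.

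I do not anticipate any genuine obstacle here, as the statement is a direct corollary of the Lemma. The only substantive content beyond bookkeeping is the verification $\ker(C_q) = \Aut(q)$, and even that reduces to the surjectivity of $q$; everything else is a restatement of facts already established, now organized into the language of a short exact sequence.
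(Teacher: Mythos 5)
Your proposal is correct and follows essentially the same route as the paper, which likewise treats the corollary as an immediate consequence of the preceding lemma: the kernel of $C_q$ is identified with $\Aut(q)$ directly from the defining commutative square, and the section $s_{\tilde{f}}$ supplies both surjectivity and the right-splitting. Your explicit verification of $\ker(C_q)=\Aut(q)$ via right-cancellation by the surjection $q$ is the same observation the paper makes when it says the kernel ``is readily seen to be'' $\Aut(q)$.
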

This result need not hold for a general $G$-set $F$. In that case, $C_q$ is still a homomorphism with kernel $\Aut(q)$, but it need not be surjective, let alone admit sections. That is, only the part
\begin{equation}
    \begin{tikzcd}
        1 \ar{r} & \Aut(q) \ar{r} & \Aut(F) \ar{r}{C_q} & \Sym(F/G)
    \end{tikzcd}
\end{equation}
is exact. This sequence need not extend to the right as the image of $C_q$ need not be normal in $\Sym(F/G)$. Any $\psi \in \Aut(F)$ may only shuffle orbits of the same shape, hence a counterexample is any $F$ where at least two orbits differ but also two orbits are isomorphic. This does not happen in case $F$ is free, in which case the sequence closes to a SES. There are other assumptions one can place on $F$, but we will not study these in this paper.

\subsection{Autmorphism group of the quotient map}

We now wish to rewrite $\Aut(q)$ in a more explicit form. The elements of this group act on the fibers of $q$, which are isomorphic to $G$, hence they can be specified using an element of $G$ for each orbit. To describe this, we use a division operation on $F$, which is not to be confused with the previous division (on $\Fr(F)$). That is, for $(f',f)\in F\times_q F$ we denote by $[f'/f]\in G$ the unique group element satisfying
\begin{equation}
    f'=[f'/f]f.
\end{equation}
Here, we view $F$ as a $G$-semi-torsor by viewing a set as a discrete space, hence also this division is in line with the main text. However, we wish to state the following familiar rules for this division $[-/-] \from F\times_q F\to G$.
\begin{Lemma}
    Assume $F$ is a free $G$-set, let $(f_1,f_2)\in F\times_q F$. The following rules hold
    \begin{enumerate}
        \item \emph{inverse:} $[f_2 / f_1]^{-1}=[f_1/ f_2]$.
        \item \emph{cancellation:} for any $f\in F$ in the same orbit as $f_1$ and $f_2$, one has $[f_2 / f_1]=[f_2 /f] [f / f_1]$.
        \item \emph{scaling:} for any $g_1,g_2\in G$, one has $[g_2f_2/g_1f_1]=g_2[f_2/f_1]g_1^{-1}$. In particular, for $g_1=g_2=g$, one has $[gf_2/gf_1]=C_g([f_2/f_1])$.
        \item \emph{invariance:} if $\psi \in \Aut(F)$, then $[\psi(f_2)/\psi(f_1)]=[f_2/f_1]$.
    \end{enumerate}
\end{Lemma}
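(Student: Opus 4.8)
The plan is to derive all four rules from a single organizing principle: since $F$ is free, the element $[f'/f]\in G$ satisfying $f'=[f'/f]f$ is \emph{unique}. Consequently, to prove any identity of the form $[a/b]=g$ it suffices to exhibit the relation $a=gb$ in $F$; uniqueness then forces the division element to equal $g$. Each of the four rules is obtained by manipulating the defining relations and reading off the result through this uniqueness.

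For the inverse rule I would start from $f_2=[f_2/f_1]f_1$ and act on the left by $[f_2/f_1]^{-1}$, yielding $[f_2/f_1]^{-1}f_2=f_1$. Comparing with the defining relation $f_1=[f_1/f_2]f_2$ and invoking uniqueness gives $[f_1/f_2]=[f_2/f_1]^{-1}$. For cancellation, the hypothesis that $f$ lies in the common orbit guarantees that $[f_2/f]$ and $[f/f_1]$ are defined; substituting $f=[f/f_1]f_1$ into $f_2=[f_2/f]f$ produces $f_2=[f_2/f][f/f_1]f_1$, and uniqueness identifies the prefactor as $[f_2/f_1]$.

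The scaling rule follows the same pattern: beginning from $f_2=[f_2/f_1]f_1$, I would write $g_2f_2=g_2[f_2/f_1]g_1^{-1}\,(g_1f_1)$, so that uniqueness applied to the pair $(g_2f_2,g_1f_1)$, which still lies in $F\times_q F$, yields $[g_2f_2/g_1f_1]=g_2[f_2/f_1]g_1^{-1}$; specializing $g_1=g_2=g$ recovers the conjugation $C_g([f_2/f_1])$. Finally, for invariance I would use that $\psi\in\Aut(F)$ is $G$-equivariant, so $\psi(f_2)=\psi([f_2/f_1]f_1)=[f_2/f_1]\psi(f_1)$, and uniqueness immediately gives $[\psi(f_2)/\psi(f_1)]=[f_2/f_1]$.

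I do not anticipate a genuine obstacle here; the entire content is the uniqueness afforded by freeness, and the only points requiring minor care are bookkeeping of the left action (keeping group elements on the correct side, as in the $g_1^{-1}$ appearing on the right in the scaling rule) and checking that each division symbol is well-defined, i.e.\ that the relevant pairs remain in $F\times_q F$. These are exactly guaranteed by the stated hypotheses.
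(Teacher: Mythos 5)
Your proof is correct and complete; the paper itself states these as ``familiar rules'' and omits the verification entirely, so there is no competing argument to compare against. Your organizing principle --- that freeness makes $[a/b]$ the unique $g$ with $a=gb$, so every identity follows by exhibiting the corresponding relation in $F$ --- is exactly the intended routine check, and you correctly handle the only delicate points (well-definedness of each division symbol and the side on which group elements act).
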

Observe that the division is `covariant' with respect to $\Aut(F)$ in its right slot, whereas in its left slot it is `contravariant'. By this we mean that for $\psi_1,\psi_2 \in \Aut(F)$ and $f \in F$ one has
\begin{equation}
    [f/\psi_1\psi_2(f)]= [f/\psi_1(f)]\cdot [\psi_1(f)/\psi_1\psi_2(f)]= [f/\psi_1(f)]\cdot [f/\psi_2(f)]
\end{equation}
such that the order is preserved, whereas for the other slot it is reversed as
\begin{equation}
    [\psi_1\psi_2(f)/f]=[\psi_1\psi_2(f)/\psi_1(f)]\cdot [\psi_1(f)/f]=[\psi_2(f)/f]\cdot [\psi_1(f)/f].
\end{equation}

We can now specify an isomorphism $\Aut(q)\to G^{F/G}$. Let us use again a section $\tilde{f}$ of $q$; in this way each $\tilde{f}(x)$ lies in the orbit $x$, so no shuffling of orbits will happen in what follows. The idea is that each $\psi\in \Aut(q)$ translates the orbit by a group element, which can be found as
\begin{equation}
    \tilde{g}_{\psi,\tilde{f}}(x)=[\tilde{f}(x)/\psi(\tilde{f}(x))]=[\psi(\tilde{f}(x))/\tilde{f}(x)]^{-1}.
\end{equation}
One thus has the map
\begin{equation}
    \begin{split}
        \Aut(q) &\to G^{F/G}\\
        \psi &\mapsto \tilde{g}_{\psi,\tilde{f}}
    \end{split}
\end{equation}
which is a homomorphism as the division on $F$ is covariant in its right slot. As one can reconstruct $\psi$ from $\tilde{g}_{\psi,\tilde{f}}$ given $\tilde{f}$, this map is invertible, hence an isomorphism. We thus find the following result.
\begin{Lemma} \label{lem:Aut(q) cong G product}
    If $F$ is a free $G$-set, then $\Aut(q)\cong G^{F/G}$.
\end{Lemma}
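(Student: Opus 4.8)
The plan is to fix a section $\tilde{f}$ of $q$ (equivalently a basis of $F$, which exists by Lemma~\ref{lem:G-basis iff free} since $F$ is free) and to use it to write down the explicit candidate map
\begin{equation}
    \Phi \from \Aut(q) \to G^{F/G}, \qquad \psi \mapsto \big(x \mapsto [\tilde{f}(x)/\psi(\tilde{f}(x))]\big).
\end{equation}
First I would check that $\Phi$ is well-defined: because $\psi\in\Aut(q)$ preserves orbits, $\psi(\tilde{f}(x))$ lies in the same orbit $x$ as $\tilde{f}(x)$, so the division $[\tilde{f}(x)/\psi(\tilde{f}(x))]\in G$ makes sense, and freeness of $F$ makes the resulting group element unique.

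The step requiring the most care is verifying that $\Phi$ is a homomorphism, for which I would invoke the division identities recorded before the lemma. For $\psi,\psi'\in\Aut(q)$ and each orbit $x$, abbreviating $f=\tilde{f}(x)$, the cancellation rule followed by the invariance rule yields
\begin{equation}
    [f/\psi'\psi(f)] = [f/\psi'(f)]\cdot[\psi'(f)/\psi'\psi(f)] = [f/\psi'(f)]\cdot[f/\psi(f)],
\end{equation}
which is exactly the covariance of the right slot. Since $G^{F/G}$ carries the pointwise product, this reads $\Phi(\psi'\psi)=\Phi(\psi')\Phi(\psi)$, so $\Phi$ is a group homomorphism. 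The subtle point here is bookkeeping the variance: it is precisely because the right slot is covariant that the order is preserved rather than reversed, so I would take care to apply the division in the correct slot.

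Finally I would establish bijectivity by exhibiting an inverse. Given $\tilde{g}\in G^{F/G}$, freeness lets me write every $f\in F$ uniquely as $f=h\,\tilde{f}(x)$ with $x=q(f)$ and $h\in G$, and I would set $\psi_{\tilde{g}}(f)=h\,\tilde{g}(x)^{-1}\tilde{f}(x)$. A short check shows this is $G$-equivariant, orbit-preserving, and invertible (its inverse is $\psi_{\tilde{g}'}$ with $\tilde{g}'(x)=\tilde{g}(x)^{-1}$), so $\psi_{\tilde{g}}\in\Aut(q)$. A direct computation gives $\Phi(\psi_{\tilde{g}})=\tilde{g}$, while conversely $\psi_{\Phi(\psi)}$ and $\psi$ agree on the basis $\tilde{f}$ and hence coincide by Lemma~\ref{lem:G-map properties}(2). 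Thus $\Phi$ is an isomorphism. I would close by remarking that although the construction of $\Phi$ depends on the auxiliary choice of section $\tilde{f}$, the abstract isomorphism $\Aut(q)\cong G^{F/G}$ does not. The only genuine obstacle is getting the variance right in the homomorphism step; everything else reduces to routine manipulation of the division calculus.
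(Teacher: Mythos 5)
Your proposal is correct and follows essentially the same route as the paper: fix a section $\tilde{f}$ of $q$, send $\psi$ to $x\mapsto[\tilde{f}(x)/\psi(\tilde{f}(x))]$, use the covariance of the right slot of the division to get the homomorphism property, and reconstruct $\psi$ from its values on the basis to get bijectivity. Your explicit inverse $\psi_{\tilde{g}}$ merely spells out the reconstruction step that the paper leaves implicit.
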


We wish to place some remarks here. First, this isomorphism depends on the chosen section $\tilde{f}$; picking another section $\tilde{f}'$ of $q$, one has $\tilde{f'}(x)=h_x\tilde{f}(x)$ for some elements $h_x\in G$ and finds
\begin{equation}
    \tilde{g}_{\psi,\tilde{f}'}(x)=[h_x\tilde{f}(x)/\psi(h_x\tilde{f}(x))]=h_x[\tilde{f}(x)/\psi(\tilde{f}(x))]h_x^{-1}=C_{h_x}(\tilde{g}_{\psi,\tilde{f}}(x)).
\end{equation}
This means that, if one restricts to bases which are sections, the identification $\Aut(q)\cong G^{F/G}$ is unique up to inner automorphism of $G^{F/G}$. In particular, the identification is canonical if and only if $G$ is commutative. 
A second remark is that instead of $\tilde{g}_{\psi,\tilde{f}}(x)$ being the element mapping $\tilde{f}(x)$ to $\psi(\tilde{f}(x))$, it is actually the inverse of this. An intuitive answer why this is can be found in the pairing argument we discuss below.

Combining Lemma~\ref{lem:Aut(q) cong G product} with the SES in Cor.~\ref{cor:SES and decomp of Aut(F)} shows that the group $\Aut(F)$ is isomorphic to the semi-direct product $G^{F/G} \rtimes \Sym(F/G)$, which is the wreath product $G\wr (F/G)$. We now wish to discuss this more explicitly in the following.

\subsection{Wreath product as model automorphism group of semi-torsor}

Let us specify an explicit isomorphism $G\wr X \to \Aut(G \times X)$. Here we also allow $G$ to be a topological group or a Lie group, and require automorphisms to respect this additional data. To do this, we assume that $X$ as the orbit space is again discrete, such that $G\times X$ is a model $G$-semi-torsor. An explicit map is given by
\begin{equation}
    \begin{split}
        I \from G\wr X &\to \Aut(G \times X)\\
        \tilde{g} &\mapsto [(g,x) \mapsto (g[\tilde{g}(x)]^{-1},x)]\\
        \sigma & \mapsto [(g,x) \mapsto (g,\sigma(x))]
    \end{split}
\end{equation}
such that a pair is mapped to
\begin{equation}
    I(\tilde{g},\sigma)= [(g,x) \mapsto (g[\tilde{g}(\sigma(x))]^{-1},\sigma(x))].
\end{equation}

\begin{Lemma}
    The map $I \from G\wr X \to \Aut(G \times X)$ is an isomorphism.
\end{Lemma}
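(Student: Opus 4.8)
The plan is to verify the three defining properties of a group isomorphism in turn: that $I$ is well-defined, meaning each $I(\tilde g,\sigma)$ genuinely lies in $\Aut(G\times X)$; that $I$ is a group homomorphism; and that $I$ is bijective. Since $X$ is discrete, every permutation of $X$ is automatically continuous and smooth, and the fibre-wise operation $g\mapsto g[\tilde g(\sigma(x))]^{-1}$ is a right translation in $G$, hence a diffeomorphism; the explicit inverse constructed below is of the same form. Thus it suffices to treat $I$ as a map of abstract groups, and the topological resp.\ Lie refinements then come for free.

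For well-definedness I would first check $G$-equivariance: with the action $g'\cdot(g,x)=(g'g,x)$ one computes $I(\tilde g,\sigma)(g'\cdot(g,x))=(g'g[\tilde g(\sigma(x))]^{-1},\sigma(x))=g'\cdot I(\tilde g,\sigma)(g,x)$, and each $I(\tilde g,\sigma)$ is a bijection since it permutes the second coordinate by $\sigma$ and right-translates the first. The main step is the homomorphism property. Using the wreath multiplication $(\tilde g,\sigma)(\tilde g',\sigma')=(\tilde g\,(\tilde g'\circ\sigma^{-1}),\sigma\sigma')$, I would compute the second-coordinate maps of $I$ applied to the product and of the composite $I(\tilde g,\sigma)\circ I(\tilde g',\sigma')$ and observe that both equal $\sigma\sigma'$, and then match the first coordinates. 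The key point is that the $G^X$-part of the product element is $x\mapsto \tilde g(x)\tilde g'(\sigma^{-1}(x))$, so evaluating at $\sigma\sigma'(x)$ and inverting yields exactly $[\tilde g'(\sigma'(x))]^{-1}[\tilde g(\sigma\sigma'(x))]^{-1}$, which is precisely the factor produced by composing the two maps in the correct order. This is where the inverses built into the definition of $I$ are essential: they turn what would otherwise be an anti-homomorphism into a homomorphism, matching the contravariance in the left slot of the division on $F$ noted above.

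Finally I would establish bijectivity. Injectivity is immediate: if $I(\tilde g,\sigma)=\id_{G\times X}$, reading the second coordinate forces $\sigma=\id_X$, and then the first coordinate forces $\tilde g(x)=e$ for all $x$, so the kernel is trivial. For surjectivity, take $\psi\in\Aut(G\times X)$; by $G$-equivariance $\psi$ carries the orbit $G\times\{x\}$ onto some orbit $G\times\{\tau(x)\}$, which defines $\tau\in\Sym(X)$, and writing $\psi(e,x)=(k_x,\tau(x))$ gives $\psi(g,x)=(gk_x,\tau(x))$ for all $g$. Setting $\sigma=\tau$ and $\tilde g(y)=k_{\tau^{-1}(y)}^{-1}$ then produces $I(\tilde g,\sigma)=\psi$. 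Alternatively one may invoke the abstract identification $\Aut(G\times X)\cong G\wr X$ obtained from Cor.~\ref{cor:SES and decomp of Aut(F)} and Lemma~\ref{lem:Aut(q) cong G product} together with the injectivity just shown, but the direct construction is more transparent. The only real obstacle is the bookkeeping in the homomorphism computation—tracking where $\sigma^{-1}$ versus $\sigma$ enters and the order in which the two inverse factors appear—so I would carry out that single calculation carefully and leave the remaining verifications as routine.
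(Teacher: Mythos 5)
Your proposal is correct and follows essentially the same route as the paper: check well-definedness (equivariance, bijectivity, smoothness), verify the homomorphism property against the wreath multiplication, and invert $I$ by extracting the permutation and the $G^X$-tuple from an arbitrary $\psi\in\Aut(G\times X)$. The only cosmetic difference is that you organize bijectivity as trivial kernel plus surjectivity, whereas the paper constructs the inverse assignment $\psi\mapsto(\tilde{g}_\psi,\sigma_\psi)$ and checks it is itself a homomorphism; your explicit homomorphism computation also fills in the step the paper leaves as ``a quick check.''
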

\begin{proof}
    Let us check that this map is well-defined. First, we note that the returned functions are indeed (left) $G$-equivariant; $G^X$ acts by right-translation and $\Sym(X)$ does not touch the $G$-slot. These maps are also bijective and continuous, or even smooth, hence lie in $\Aut(G\times X)$. A quick check shows that $I$ is a homomorphism, hence it remains to show that $I$ is invertible.
    
    Take $\psi \in \Aut(G\times X)$. First, note that $C_q$ returns the underlying unique permutation $C_q(\psi)=\sigma_\psi \in \Sym(X)$ of $\psi$, so only the $G^X$-element remains to be found. For this purpose, observe that $\psi\circ (\id_G,(\sigma_\psi)^{-1})$ is identity on the $X$ factor. Hence, one may define $\tilde{g}_\psi$ uniquely by the requirement
    \begin{equation}
        \psi((h,\sigma_\psi^{-1}(x)))=(h\tilde{g}_\psi(x)^{-1},x)
    \end{equation}
    or in function form
    \begin{equation}
        \psi\circ (\id_G,\sigma_\psi^{-1})=r_{\tilde{g}_\psi}^{-1}.
    \end{equation}
    Note that both sides contain left-equivariant maps. The map $\psi \mapsto (\tilde{g}_\psi,\sigma_\psi)$ is the inverse of $I$. As we will see now, it is a homomorphism as well. The combination $\psi'\psi$ clearly has $\sigma_{\psi'\psi}=\sigma_{\psi'}\sigma_{\psi}$, and the similar rule for the $G^X$-element follows from
    \begin{equation}
        \begin{aligned}
            \psi'\psi \circ (\id_G,\sigma_\psi^{-1}\sigma_{\psi'}^{-1})
            &= \psi'(\psi \circ (\id_G,\sigma_\psi^{-1})) \circ (\id_G,\sigma_{\psi'}^{-1})\\
            &= \psi' (\id_G,\sigma_{\psi'}^{-1})(\id_G,\sigma_{\psi'})(r_{\tilde{g}_\psi}^{-1}) \circ (\id_G,\sigma_{\psi'}^{-1})\\
            &= r_{\tilde{g}_{\psi'}}^{-1} \circ r_{\tilde{g}_\psi\circ \sigma^{-1}_{\psi'}}^{-1} =r_{\tilde{g}_{\psi'} (\tilde{g}_\psi\circ \sigma^{-1}_{\psi'})}^{-1}
        \end{aligned}
    \end{equation}
    such that $\tilde{g}_{\psi'\psi}$ equals $\tilde{g}_{\psi'} (\tilde{g}_\psi\circ \sigma^{-1}_{\psi'})$. The latter is exactly the $G$-component in the multiplication of the wreath product. Hence $I$ is an isomorphism of groups.
\end{proof}

The form of $I$ can actually be deduced by an argument similar to that of dual spaces. In particular, this argument explains why $\tilde{g}$ enters via an inverse (in fact $\sigma$ also has an inverse w.r.t.\ previous equations). A motivating argument is as follows. Fix some $f\in F$, let $\tilde{f}$ be a basis with basis index set $X$. Then there is a unique pair $(g,x)\in G\times X$ such that
\begin{equation}
    f=g\tilde{f}(x),
\end{equation}
so the pair $(g,x)$ can be seen as the coordinates of $f$. Let us change basis by acting with $\tilde{g}$ on $\tilde{f}$. Then $f$ is given as
\begin{equation}
    f=g[\tilde{g}(x)]^{-1}(\tilde{g}(x)\tilde{f}(x)),
\end{equation}
so in the new basis the point $f$ has coordinates $(g[\tilde{g}(x)]^{-1},x)$. Hence, the inverse on the group element can be found by looking for the map on coordinates induced by the change of basis.

We discuss this in terms of a pairing map. This map is canonical; it takes coordinates $(g,x)$ and a basis $\tilde{f}$ and returns the element that this pair describes, i.e.\ $g \tilde{f}(x)$. More formally, for the $G$-set $F$ this is the map defined as
\begin{equation}
    \begin{split}
        \pair{\;}{\;} \from (G\times X) \times \Fr(F) &\to F\\
        ((g,x),\tilde{f}) &\mapsto \phi_{\tilde{f}}(g,x)=g \tilde{f}(x).
    \end{split}
\end{equation}
The key observation is the following invariance argument.

\begin{Lemma}
    Any $(\tilde{g},\sigma)\in G\wr X$ induces a unique map $\psi_{\tilde{g},\sigma} \in \Aut(G\times X)$ via the invariance condition
    \begin{equation}
        \pair{\psi_{\tilde{g},\sigma}(g,x))}{(\tilde{g},\sigma)\cdot \tilde{f}}=\pair{(g,x)}{\tilde{f}}, \quad \forall (g,x)\in G\times X,\; \forall \tilde{f}\in \Fr(F).
    \end{equation}
    This assignment $(\tilde{g},\sigma) \mapsto \psi_{\tilde{g},\sigma}$ defines a homomorphism $G\wr X \to \Aut(G\times X)$.
\end{Lemma}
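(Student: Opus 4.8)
The plan is to turn the implicit invariance condition into an explicit formula, from which both membership in $\Aut(G\times X)$ and the homomorphism property will follow. First I would fix $(\tilde g,\sigma)\in G\wr X$ and unfold the invariance condition. Writing the unknown value as $\psi_{\tilde g,\sigma}(g,x)=(g',x')$ and using the definition of the pairing together with the action of Prop.~\ref{prop:GwrX action}, namely $((\tilde g,\sigma)\cdot\tilde f)(x')=\tilde g(x')\,\tilde f(\sigma^{-1}(x'))$, the defining equation
\[
\pair{(g',x')}{(\tilde g,\sigma)\cdot\tilde f}=\pair{(g,x)}{\tilde f}
\]
becomes
\[
g'\,\tilde g(x')\,\tilde f(\sigma^{-1}(x'))=g\,\tilde f(x),\qquad\forall\,\tilde f\in\Fr(F).
\]
Solving this for $(g',x')$ is the heart of the matter.

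The main obstacle is precisely this uniqueness/existence deduction, which is where freeness of $F$ and the frame structure enter. For any frame $\tilde f$ the composite $q\circ\tilde f$ is a bijection $X\to F/G$ by Lemma~\ref{lem:G-basis iff generalized section}, so the two sides of the displayed equation lie in the orbits $q(\tilde f(\sigma^{-1}(x')))$ and $q(\tilde f(x))$ respectively. Equal elements share an orbit, and since $q\circ\tilde f$ is injective these orbits coincide only when $\sigma^{-1}(x')=x$; as at least one frame exists (Lemma~\ref{lem:G-basis iff free}), this forces $x'=\sigma(x)$. Substituting back and using freeness of $F$ then yields $g'=g\,\tilde g(\sigma(x))^{-1}$, so necessarily
\[
\psi_{\tilde g,\sigma}(g,x)=\bigl(g\,[\tilde g(\sigma(x))]^{-1},\,\sigma(x)\bigr).
\]
A short back-substitution confirms that this one formula satisfies the invariance condition for \emph{every} frame $\tilde f$, which establishes existence and uniqueness simultaneously.

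I would then observe that this is exactly the map $I(\tilde g,\sigma)$ of the previous lemma, so $\psi_{\tilde g,\sigma}\in\Aut(G\times X)$ is immediate. Alternatively, one checks directly that $(g,x)\mapsto(g[\tilde g(\sigma(x))]^{-1},\sigma(x))$ is left-$G$-equivariant (left multiplication on the $G$-slot commutes with right translation by $[\tilde g(\sigma(x))]^{-1}$), bijective, and continuous or smooth when $G$ carries the relevant structure, hence an automorphism of the $G$-semi-torsor $G\times X$.

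Finally, for the homomorphism property I would avoid recomputing with the formula and instead reuse the defining condition, in the spirit of the pairing argument. Applying the invariance for $(\tilde g,\sigma)$ to the frame $(\tilde g',\sigma')\cdot\tilde f$ and then the invariance for $(\tilde g',\sigma')$ gives, for all $(g,x)$ and all $\tilde f$,
\[
\pair{\psi_{\tilde g,\sigma}\bigl(\psi_{\tilde g',\sigma'}(g,x)\bigr)}{\bigl((\tilde g,\sigma)(\tilde g',\sigma')\bigr)\cdot\tilde f}=\pair{(g,x)}{\tilde f},
\]
where I used that the action is a left action, so $(\tilde g,\sigma)\cdot\bigl((\tilde g',\sigma')\cdot\tilde f\bigr)=\bigl((\tilde g,\sigma)(\tilde g',\sigma')\bigr)\cdot\tilde f$. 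This is exactly the condition that uniquely characterises $\psi_{(\tilde g,\sigma)(\tilde g',\sigma')}$, so by the uniqueness already proven one concludes $\psi_{\tilde g,\sigma}\circ\psi_{\tilde g',\sigma'}=\psi_{(\tilde g,\sigma)(\tilde g',\sigma')}$. Hence $(\tilde g,\sigma)\mapsto\psi_{\tilde g,\sigma}$ is a homomorphism $G\wr X\to\Aut(G\times X)$, completing the proof.
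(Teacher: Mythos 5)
Your proof is correct, and it reaches the conclusion by a somewhat different route than the paper. For existence and uniqueness, the paper writes the candidate abstractly as $\psi_{\tilde g,\sigma}=\phi^{-1}_{(\tilde g,\sigma)\cdot\tilde f}\circ\phi_{\tilde f}$ (each $\phi$ being a $G$-isomorphism, so membership in $\Aut(G\times X)$ is automatic) and then proves independence of the chosen frame by inserting the division automorphism $[\tilde f'/\tilde f]$, which commutes with the $G\wr X$-action on frames and cancels against its inverse. You instead solve the invariance equation in coordinates: injectivity of $q\circ\tilde f$ forces $x'=\sigma(x)$, freeness forces $g'=g[\tilde g(\sigma(x))]^{-1}$, and a back-substitution shows this single formula works for all frames at once. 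The two arguments buy different things: yours produces the explicit formula (and hence the identification with $I$) inside the proof, which the paper only does as a remark afterwards, while the paper's basis-independence argument is coordinate-free and makes the well-definedness of the ``counteracting map'' transparent without ever computing it. Your treatment of the homomorphism property --- iterate the invariance condition, use that the frame action is a left action, and invoke uniqueness --- is exactly the argument the paper compresses into ``readily follows from the invariance condition and uniqueness of the counteracting map,'' so that part coincides. The only point worth making explicit is that your uniqueness step needs $\Fr(F)\neq\emptyset$, which you correctly flag via Lemma~\ref{lem:G-basis iff free}.
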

\begin{proof}
    We recall that the $\phi_{\tilde{f}}$ are $G$-isomorphisms for any basis $\tilde{f}$. The invariance condition reads
    \begin{equation}
        \phi_{(\tilde{g},\sigma)\cdot \tilde{f}}(\psi_{\tilde{g},\sigma}(g,x))=\phi_{\tilde{f}}(g,x),\quad \text{ or } \quad \psi_{\tilde{g},\sigma}(g,x)=\phi^{-1}_{(\tilde{g},\sigma)\cdot \tilde{f}}\circ \phi_{\tilde{f}}(g,x).
    \end{equation}
    Hence $\psi_{\tilde{g},\sigma}=\phi^{-1}_{(\tilde{g},\sigma)\cdot \tilde{f}}\circ \phi_{\tilde{f}}$, provided the latter is independent of $\tilde{f}$. This can verified by a direct check, but also by the following argument. Let us use the map $[\tilde{f}'/\tilde{f}]$ and observe that it commutes with the $G\wr X$-action, as it lies in $\Aut(F)$. Moreover, $\phi_{[\tilde{f}'/\tilde{f}]\tilde{f}}=[\tilde{f}'/\tilde{f}]\phi_{\tilde{f}}$ cf Lemma~\ref{lem:G-map properties}. Hence, in the basis $\tilde{f}'$ one gets
    \begin{equation}
        \begin{aligned}
            \psi'_{\tilde{g},\sigma}&=\phi^{-1}_{(\tilde{g},\sigma)\cdot \tilde{f}'}\circ \phi_{\tilde{f}'}=\phi^{-1}_{(\tilde{g},\sigma)\cdot [\tilde{f}'/\tilde{f}]\tilde{f}}\circ \phi_{[\tilde{f}'/\tilde{f}]\tilde{f}}=\phi^{-1}_{[\tilde{f}'/\tilde{f}](\tilde{g},\sigma)\cdot \tilde{f}}\circ \phi_{[\tilde{f}'/\tilde{f}]\tilde{f}}\\
            &=\phi^{-1}_{(\tilde{g},\sigma)\cdot \tilde{f}}[\tilde{f}'/\tilde{f}]^{-1}[\tilde{f}'/\tilde{f}]\phi_{\tilde{f}}=\phi^{-1}_{(\tilde{g},\sigma)\cdot \tilde{f}}\phi_{\tilde{f}}=\psi_{\tilde{g},\sigma}.
        \end{aligned}
    \end{equation}
    Hence $\psi_{\tilde{g},\sigma}$ is a well-defined map. As the $\phi$'s are $G$-isomorphisms, so is $\psi_{\tilde{g},\sigma}$, hence it lies in $\Aut(G\times X)$. The homomorphism claim readily follows from the invariance condition and uniqueness of the counteracting map.
\end{proof}
Given the description $\psi_{\tilde{g},\sigma}=\phi^{-1}_{(\tilde{g},\sigma)\cdot \tilde{f}}\circ \phi_{\tilde{f}}$, one can explicitly compute the induced homomorphism. We then quickly recognize the map $I$ as
\begin{equation}
    \psi_{(\tilde{g},\id)}=\phi_{\tilde{g}\cdot \tilde{f}}^{-1}\circ \phi_{\tilde{f}} \colon (g,x)\mapsto g\tilde{f}(x)=g[\tilde{g}(x)]^{-1}(\tilde{g}(x)\tilde{f}(x))\mapsto (g[\tilde{g}(x)]^{-1},x)
\end{equation}
and analogously
\begin{equation}
    \psi_{(\id,\sigma)}=\phi_{\sigma\cdot \tilde{f}}^{-1}\circ \phi_{\tilde{f}} \colon (g,x) \mapsto g\tilde{f}(x)=g\tilde{f}(\sigma^{-1}(\sigma(x)))\mapsto (g,\sigma(x)).
\end{equation}

We remark that a similar argument can be phrased for vector spaces. In particular, one may consider the map 
\begin{equation}
    \begin{split}
        \R^n \times \Fr(\R^n) &\to \R^n\\
        ((c_i),(v_i)) &\mapsto \sum_{i=1}^n c_iv_i.
    \end{split}
\end{equation}
Note that this explicitly pairs coordinates and bases to describe a certain element. In this case $\GL(n,\R)$ acts on $\Fr(\R^n)$, and the induced map $\GL(n,\R) \to \Aut(\R^n)\cong \GL(n,\R)$ is given by the contragredient map $A \mapsto A^{-T}$.

\section{Permutation action on semi-principal bundle\label{sec:S_n action iff trivial}}

In this appendix we show that a semi-principal bundle $B$ with index set $X$ will in general not support a faithful $\Sym(X)$-action. For simplicity, we will consider the reduced bundle $B/G$, or taking another perspective, we assume $G$ is the trivial group. For the ease of the example, we take $X$ to be finite. Hence $F\cong X=I_n:=\{1,2,\dots,n\}$ for some natural number $n$, and $B$ is a regular covering space.

Some motivation to consider this is to check if an faithful $S_n$-action can be defined already on $B$. If that would be the case, one may wonder if the step to $\Fr(B)$ may be avoided. Concerning the individual fibers there is no problem; choose a bijection to $I_n$ and transfer the action of $S_n=\Sym(I_n)$ on $I_n$ via conjugation. However, it is this choice of bijection that might not be possible on a global scale; it is not guaranteed that all the local choices glue together to form a global $S_n$-action.

Let us check if such a bijection is needed. The case $n=1$ is trivial. The case $n=2$ is also not a problem; one simply declares that the permutation $(12)$ exchanges the only 2 objects in each fiber, which certainly forms a global action.
However, in case $n\geq 3$ one does not obtain a well-defined action in general. Informally, this is because now one really needs to know the labels of fiber elements. Indeed, to specify the action of e.g.\ $(12)$ when there are 3 objects, it is necessary to know which of the objects is labelled by 3. Hence, specifying the action of $(12)$ is equivalent to specifying which of the objects is labelled by 3. This observation leads us to the following result.
\begin{Lemma} \label{lem:action to labelling}
    Let $A$ be a set with $n$ elements, where $3\leq n <\infty$. A faithful $S_n$-action on $A$, or in other words an isomorphism $S_n\cong \Sym(A)$, induces a unique equivariant bijection $A\cong I_n$.
\end{Lemma}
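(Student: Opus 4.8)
The plan is to recognize the sought-after bijection as the datum realizing the given isomorphism as a conjugation, to read off uniqueness from the triviality of the center, and to obtain existence from the dictionary between transitive actions and their point stabilizers. First I would unwind the definitions. Write the $S_n$-action on $A$ as $g\cdot a=\varphi(g)(a)$, where $\varphi\from S_n\to\Sym(A)$ is the given isomorphism, and the natural action on $I_n$ as $g\cdot i=g(i)$. A bijection $\beta\from A\to I_n$ is then equivariant exactly when $\beta\circ\varphi(g)=g\circ\beta$ for all $g\in S_n$, i.e.\ when $\varphi(g)=\beta^{-1}\circ g\circ\beta$ for all $g$. Thus an equivariant bijection is precisely a bijection $\beta$ exhibiting $\varphi$ as the conjugation isomorphism $c_\beta\from\Sym(I_n)\to\Sym(A)$, equivalently an isomorphism of $S_n$-sets between $(A,\varphi)$ and $(I_n,\text{natural})$. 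This reformulation is the key move, since it converts the statement into a question about how isomorphisms $S_n\cong\Sym(A)$ arise.

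For uniqueness, suppose $\beta$ and $\beta'$ are both equivariant. Then $c_\beta=c_{\beta'}=\varphi$, so $\beta'\beta^{-1}\in\Sym(I_n)$ satisfies $(\beta'\beta^{-1})g=g(\beta'\beta^{-1})$ for every $g\in\Sym(I_n)$; that is, $\beta'\beta^{-1}$ lies in the center of $S_n$. As $n\geq 3$ forces this center to be trivial, one concludes $\beta'=\beta$. This is exactly where the hypothesis $n\geq 3$ enters: for $n\leq 2$ the center is nontrivial and the labelling is not pinned down.

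Existence is the substantive part. The $\varphi$-action makes $A$ a transitive $S_n$-set of cardinality $n$, so the stabilizer $H=\mathrm{Stab}_\varphi(a_0)$ of any $a_0\in A$ is an index-$n$ subgroup of $S_n$; by the orbit–stabilizer dictionary, the $\varphi$-action is isomorphic to the natural action on $I_n$ if and only if $H$ is conjugate to the standard point stabilizer $S_{n-1}$. Concretely I would send each $a\in A$ to the index-$n$ subgroup $\varphi^{-1}\bigl(\Sym(A)_a\bigr)$ of $S_n$ and identify it with a standard point stabilizer, thereby defining $\beta$, after which equivariance is a direct check against the reformulation above.

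The crux — and the step I expect to be the main obstacle — is the classical fact that every index-$n$ subgroup of $S_n$ is a point stabilizer, which holds for all $n$ \emph{except} $n=6$. For $n=6$ the outer automorphism of $S_6$ produces isomorphisms $\varphi$ carrying transpositions to triple transpositions, for which no equivariant $\beta$ exists; so the argument (and the bare statement) must be read with $n=6$ excluded, or else one restricts to those actions in which transpositions are sent to transpositions, from which $\beta$ can be reconstructed by tracking the fixed points of the images of the transpositions $(i\,j)$. Away from this single exception the matching of stabilizers is automatic and the equivariant bijection drops out.
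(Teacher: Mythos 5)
Your reformulation, uniqueness argument, and existence strategy are all correct, and your $n=6$ objection is not a pedantic caveat but a genuine counterexample to the lemma as stated: composing the canonical isomorphism $\Sym(I_6)\cong\Sym(A)$ with a representative of the outer automorphism class of $S_6$ yields a faithful $S_6$-action on a six-element set $A$ admitting no equivariant bijection to $I_6$, since such a bijection would exhibit the composite as inner. The paper's own proof breaks at exactly the point you predicted: it produces $a_i$ as a fixed point of (the image of) the stabilizer $H_i\cong S_{n-1}$, asserting that a fixed point must exist ``otherwise $H_i$ would contain a permutation affecting all elements of $I_n$, including $i$''. This conflates the action of $H_i$ on $I_n$ (where it fixes $i$ by definition) with its action on $A$ via the given isomorphism; for an outer isomorphism of $S_6$ the image of $H_i$ is a transitive copy of $S_5$ inside $\Sym(A)$ and fixes no point of $A$. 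Your route via index-$n$ subgroups makes explicit the classical input the paper silently assumes --- that every subgroup of index $n$ in $S_n$ is a point stabilizer, which holds precisely for $n\neq 6$ --- whereas the paper's fixed-point construction is the same fact in disguise, since the image of $H_i$ has a fixed point in $A$ if and only if it is contained in, hence by order count equals, a point stabilizer. Your uniqueness argument via triviality of the center of $S_n$ is also cleaner than the paper's and isolates exactly where $n\geq 3$ enters. The lemma, and Prop.~\ref{prop:global S_n action -> trivial} which invokes it fiberwise, should therefore carry the additional hypothesis $n\neq 6$, or be restricted to actions sending transpositions to transpositions as you suggest; since this appendix is a side remark, the main results of the paper are unaffected.
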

\begin{proof}
    Pick $i\in I_n$, consider the subgroup $H_i$ of permutations of $S_n$ that leave $i$ fixed, i.e.\ the stabilizer of $i$ in $S_n$. Then there is at least one element $a_i\in A$ that is fixed by $H_i$; otherwise $H_i$ would contain a permutation affecting all elements of $I_n$, including $i$, which is a contradiction. This element is unique for $n\geq 3$, as $H_i$ then contains permutations moving every element of $I_n$ except $i$. Hence there is a well-defined map $I_n \to A$, $i\mapsto a_i$.
    This map is equivariant; pick $i\in I$ and $\sigma\in S_n$, then for any $\tau \in H_{\sigma(i)}$ one has $\sigma^{-1}\tau\sigma \in H_{\sigma^{-1}(\sigma(i))}=H_i$. Hence $\sigma^{-1}\tau\sigma a_i=a_i$, or $\tau(\sigma a_i)=\sigma a_i$. As $\tau\in H_{\sigma(i)}$ was arbitrary, $\sigma a_i$ is fixed by $H_{\sigma(i)}$, hence $\sigma a_i=a_{\sigma(i)}$, showing equivariance.
    It remains to show that $i\mapsto a_i$ is a bijection. It suffices to show that all $a_i$ are distinct. This follows from the observation that at least $n$ distinct objects are needed to have a faithful $S_n$-action. As one can explicitly find the bijection $A\cong I_n$ in the above way, it is unique.
\end{proof}

We may use this result per fiber in the bundle theory. In essence, the global $S_n$-action is equivalent to specifying a global labelling, which in turn provides a trivialization of the bundle.
\begin{Proposition} \label{prop:global S_n action -> trivial}
    Let $\pi \from B \to M$ be an $I_n$-bundle, where $3\leq n <\infty$.
    There is an faithful fiber-preserving $S_n$-action on $B$ if and only if $B$ is trivial.
\end{Proposition}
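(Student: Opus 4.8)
The plan is to prove both implications, with the forward direction resting on a fiber-wise application of Lemma~\ref{lem:action to labelling}. The reverse implication is immediate: if $B$ is trivial, say $B=M\times I_n$, then $\sigma\cdot(m,i):=(m,\sigma(i))$ defines a fiber-preserving $S_n$-action whose restriction to each fiber is the standard (hence faithful) action of $S_n$ on $I_n$, so the global action is faithful. The content is thus entirely in the forward direction.

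For the forward direction, suppose a faithful fiber-preserving $S_n$-action is given, and write $\rho_m\from S_n\to \Sym(B_m)$ for its restriction to the fiber over $m$. The first step is to show that $\rho_m$ is locally constant in $m$. Over a connected trivializing open $U$ with $\phi\from U\times I_n\to B|_U$, each $\sigma\in S_n$ induces a continuous fiber-preserving self-map of $U\times I_n$, that is, a map $(u,i)\mapsto(u,\tau_u^\sigma(i))$ with $\tau_u^\sigma\in\Sym(I_n)$; since $I_n$ is discrete and $U$ is connected, $u\mapsto\tau_u^\sigma(i)$ is constant, so $\tau_u^\sigma$ and hence $\ker\rho_u$ do not depend on $u\in U$. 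Thus $\ker\rho_m$ is locally constant on $M$. On a connected base it is therefore a single normal subgroup $K\trianglelefteq S_n$ acting trivially on every fiber, hence on all of $B$, and faithfulness forces $K=\{e\}$ (for disconnected $M$ one reads faithfulness fiber-wise, which is the pertinent notion). Consequently each $\rho_m$ is an isomorphism $S_n\cong\Sym(B_m)$.

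With fiber-wise faithfulness in hand, Lemma~\ref{lem:action to labelling} (using $n\ge 3$) gives, for each $m$, a unique equivariant bijection $\ell_m\from B_m\to I_n$. Assembling these yields $\Lambda\from B\to M\times I_n$, $b\mapsto(\pi(b),\ell_{\pi(b)}(b))$, and it remains to check that $\Lambda$ is a bundle isomorphism. Here the continuity of $\ell$ is the crux: in the trivialization above the action $\rho$ is constant over $U$, so the distinguished fixed points produced by the Lemma are constant sections, whence $\Lambda\circ\phi$ has the form $(u,i)\mapsto(u,\pi_0(i))$ for a single permutation $\pi_0$. This exhibits $\Lambda$ locally as a homeomorphism respecting $\pi$, so $\Lambda$ is a global trivialization and $B$ is trivial.

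The main obstacle is not the fiber-wise algebra, which is exactly Lemma~\ref{lem:action to labelling}, but the passage from a merely faithful global action to one that is faithful on each fiber, together with the continuity of the resulting labelling. Both are handled by the same local-constancy observation: discreteness of the fiber and connectedness of trivializing opens force the induced permutations, and therefore the kernels and the distinguished fixed points, to be locally constant, which simultaneously upgrades faithfulness to the fiber-wise statement and glues the pointwise bijections $\ell_m$ into a continuous trivialization.
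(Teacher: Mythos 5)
Your proof is correct and follows essentially the same route as the paper: apply Lemma~\ref{lem:action to labelling} fiber-wise to obtain a labelling $B\to I_n$, then use local triviality and discreteness of the fiber to see that this labelling is locally constant, hence assembles into a global trivialization. The only addition is your careful local-constancy argument upgrading global faithfulness to fiber-wise faithfulness of the action, a point the paper's proof treats as implicit.
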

\begin{proof}
    The `if' statement is clear. For the `only if', define $\phi \from B \to M \times I_n$ fiber-wise; for $m\in M$, $b\in B_m$ is mapped to the pair $(m,\#(b))$, where $\#(b)$ is the index obtained by applying Lemma~\ref{lem:action to labelling} on the $S_n$-space $B_m$. Hence $\phi$ is a bijection as it is so at each fiber. Using local triviality of $B$ and continuity of the $S_n$-action, it follows that $\# \from B\to I_n$ is locally constant, hence $\phi=(\pi,\#)$ is a homeomorphism. That is, $B$ is trivial.
\end{proof}

Ironically, this means that one may work with an $S_n$-action on $B$ only if no permutation will appear in parallel transport. The reason why a frame does not have this problem is clear; in that case not a locally defined label but instead the globally defined location in the ordered tuple matters.

\newcommand{\doi}[1]{\href{https://doi.org/#1}{doi:~#1}}
\newcommand{\arxiv}[1]{\href{https://arxiv.org/abs/#1}{arXiv:~#1}}

\end{document}